\def\BState{\State\hskip-\ALG@thistlm}
\newtheorem{thm}{Theorem}[section]
\newtheorem{prop}[thm]{Proposition}
\newtheorem{cor}[thm]{Corollary}
\newtheorem{lem}[thm]{Lemma}
\newtheorem{claim}{Claim}
\theoremstyle{definition}
\newtheorem{defn}[thm]{Definition}
\theoremstyle{remark}
\newtheorem{rem}[thm]{Remark}
\newcommand{\isom}{\mathrm{Isom}}
\newcommand{\U}{X}
\newcommand{\act}{\curvearrowright}
\newcommand{\f}{\mathscr F}
\newcommand{\e}[1]{\omega_{#1}}
\newcommand{\str}{\{0,1\}^{m}}
\newcommand{\ax}{\mathrm{Ax}}
\newcommand{\diam }[1]{{\textrm{diam}\big(#1\big)}}
\newcommand{\proj}{\textbf{d}}
\newcommand{\len }{\ell}
\begin{document}
	\title{Growth Gaps for Quotients by Confined Subgroups}
 \author{Lihuang Ding}	
 \address{Beijing International Center for Mathematical Research\\
Peking University\\
 Beijing 100871, China
P.R.}
 \email{shanquan2@stu.pku.edu.cn}
 
 \author{Wenyuan Yang}

\address{Beijing International Center for Mathematical Research\\
Peking University\\
 Beijing 100871, China
P.R.}
\email{wyang@math.pku.edu.cn}

\thanks{(W.Y.) Partially supported by National Key R \& D Program of China (SQ2020YFA070059 and 2025YFA1017500) and  National Natural Science Foundation of China (No. 12131009 and No.12326601)}

\subjclass[2000]{Primary 20F65, 20F67, 37D40}

\date{\today}

\dedicatory{} 

\keywords{confined subgroups, contracting elements, growth tightness, growth rate}

\maketitle

\begin{center}
{\it With an appendix by Lihuang Ding and Kairui Liu}    
\end{center}
\begin{abstract} 
In this paper, we establish growth gaps for quotients by confined subgroups in groups admitting a statistically convex-cocompact action with contracting elements. 
We also discuss applications to uniformly recurrent subgroups.
\end{abstract}

    \section{Introduction}
    In recent years, the class of confined subgroups has emerged as a significant generalization of normal subgroups and has attracted considerable research interest. Although the definition can be adapted to the setting of general topological groups, we assume throughout that $G$ is a discrete group. 
	\begin{defn}
		A nontrivial subgroup $H\subset G$ is called \textit{confined} if there exists a finite set $P\subset G$ such that for every element $g\in G$, we have $g^{-1} H g\cap (P \setminus \{1\})\neq \emptyset$. The set $P$ is called a \textit{confining subset} for $H$.
	\end{defn}
    Suppose that $G$ acts properly by isometries on a proper geodesic metric space $(X,d)$. When the action $G \act X$ is free and cocompact, confined subgroups admit a geometric characterization : a subgroup $H \leq G$ is confined if and only if the quotient space $X/H$ has uniformly bounded injectivity radius at every point. This perspective has proven particularly useful in the study of geometric convergence of Riemannian manifolds (\cite{ABBGNRS}). Assuming further that $G$ contains contracting elements, the present paper aims to investigate confined subgroups from the viewpoint of volume growth, following the theme of \cite{CGYZ}. We also discuss connections with, and applications to topics of recent interest: namely, invariant random subgroups and uniformly recurrent subgroups  in \textsection~\ref{subsec:applications}.

Let $E(G)$ denote the maximal finite normal subgroup of $G$, whose existence is guaranteed by Lemma~\ref{lem:E(G)}. Observe that the product of any subgroup with a finite normal subgroup is confined. In particular, if $E(G)$ is nontrivial, then every subgroup of $G$ becomes commensurable with a confined subgroup after multiplying by $E(G)$. Since the properties we study are invariant under commensurability, we restrict our attention to confined subgroups that admit a \textit{non-degenerate confining subset} $P \subset G$ that is disjoint from $E(G)$. This excludes those pathological examples arising from $E(G)$.

We now introduce the main growth quantities considered in this paper. 
		Fix a base point $o\in X$. For $n\ge 0$, denote $B(n) = \{g\in G\mid d(o, go)\le n\}$ . The \textit{growth rate} $\omega_Z$ of a subset $Z\subseteq G$ is defined by
		\[\omega_Z = \limsup\limits_{n\to \infty} n^{-1} \ln |B(n)\cap Z|,\]
        which is independent of the choice of $o$.
        Note that $\omega_G$ exists as a genuine limit under the assumption that $G$ contains contracting elements (see \cite[Theorem B(1)]{YANG10}). If $\omega_Z < \omega_G$, we say that $Z$ is \emph{growth tight} in $G$. A growth tight subset is also referred to as \emph{exponentially negligible}, and its complement is correspondingly called \emph{exponentially generic}.
        
		Now, let $H$ is a subgroup of $G$. Denote $B_{G/H}(n) = \{gH\mid d(o, gH o)\le n\}$. The \textit{quotient growth rate} $\omega_{G/H}$ of $H$ is defined analogously:
		\[\omega_{G/H} = \limsup\limits_{n\to \infty} n^{-1} \ln |B_{G/H}(n)|.\] 
  Throughout the paper, we assume $\omega_G<\infty$. One always has $\omega_{G/H}\le \omega_G$. A line of research initiated by Grigorchuk and de la Harpe \cite{GriH} investigates when the inequality becomes strict for infinite normal subgroups. Groups with this property are called \emph{growth tight} in the literature, and this notion has potential applications to the Hopfian property (\cite{FS23}).  

A series of research results have established growth tightness for various classes of groups with negative curvature; see \cite{GriH,AL,Sam1,YANG6,ACTao,YANG10,SZ,MGZ}. The most general result to date is the growth tightness proved for normal subgroups in \cite{ACTao} for groups satisfying the complementary growth gap property. This class of groups coincides with the statistically convex-cocompact  actions (SCC actions for short) later introduced in \cite{YANG10}, which include (relatively) hyperbolic groups, mapping class groups and CAT(0) groups with rank-one elements, among many others. SCC actions are the primary object of study in the present paper.  We refer the reader to \textsection\ref{SSecSCC} for the precise definition and  a relevant discussion of SCC actions. 
 

Generalizing these previous works, our main result establishes growth tightness for the much broader class of confined subgroups arising in SCC actions.
	\begin{thm}\label{MainThm}
		Suppose $G$ admits  a proper SCC action on a proper geodesic metric space $X$ with contracting elements. Let $P\subset G$ be a finite  non-degenerate subset. Then there exists $0<\omega_0 = \omega_0(P)< \omega_{G}$ such that for any nontrivial confined subgroup $H\subset G$ with confining subset $P$,  $$\omega_{G/H} < \omega_0.$$
	\end{thm}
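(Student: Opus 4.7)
The plan is to follow the general template for growth tightness in SCC actions with contracting elements developed in \cite{ACTao, YANG10}, adapted to the confined (not necessarily normal) setting via the confining set $P$. The core idea is to build a Schottky concatenation that consumes $k$ coset representatives and then exploit the confinement to attach an extra binary decoration per coset, yielding a multiplicative surplus of $2^k$ group elements per coset tuple that forces a strict uniform gap in growth rates.

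\emph{Step 1 (Schottky setup).} Since $G \act X$ is SCC with contracting elements, the extension lemma of \cite{YANG10} furnishes, for each $N \geq 1$, a finite set $F$ of contracting elements of $G$ with $|F| \geq N$, translation length exceeding $D := \max_{p\in P} d(o, po)$, and a constant $C_0 = C_0(F)$ such that for any $g_1, \ldots, g_k \in G$ and any $f_{j_1}, \ldots, f_{j_{k-1}} \in F$, the concatenation
\[
w := g_1 f_{j_1} g_2 f_{j_2} \cdots f_{j_{k-1}} g_k
\]
realizes a quasi-geodesic from $o$ whose length lies within $kC_0$ of $\sum_i d(o, g_i o) + \sum_i d(o, f_{j_i} o)$, with unique Schottky decomposition.

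\emph{Step 2 (Decoration from confinement).} For each coset $gH$ pick a shortest representative $\bar g$, and by confinement choose $p_{\bar g}\in P$ with $\bar g^{-1} p_{\bar g} \bar g \in H \setminus \{1\}$. Non-degeneracy of $P$ (disjointness from $E(G)$, in particular $1\notin P$) ensures $p_{\bar g}\bar g \in \bar g H$ is a group element distinct from $\bar g$. Setting $\bar g^{(0)} := \bar g$ and $\bar g^{(1)} := p_{\bar g}\bar g$, define
\[
\Phi_k \colon B_{G/H}(n)^k \times F^{k-1} \times \{0,1\}^k \longrightarrow B\bigl(k(n + C_0 + D)\bigr),
\]
\[
\bigl((\bar g_i),(f_{j_i}),(\epsilon_i)\bigr) \longmapsto \bar g_1^{(\epsilon_1)} f_{j_1} \bar g_2^{(\epsilon_2)} f_{j_2} \cdots f_{j_{k-1}} \bar g_k^{(\epsilon_k)}.
\]
Because the translation lengths of $F$ dominate $D$, each short insertion $p_{\bar g_i}$ sits between two long Schottky letters, and the extension lemma implies that the bits $\epsilon_i$ can be recovered from contracting projections of $w \cdot o$ onto the axes of the $f_{j_i}$; hence $\Phi_k$ is injective.

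\emph{Step 3 (Poincar\'e series and the uniform gap).} The injection yields, with $\Theta_G(s) := \sum_{g\in G} e^{-s d(o,go)}$ and $\Theta_{G/H}(s) := \sum_{gH} e^{-s d(o,gHo)}$,
\[
\Theta_G(s) \geq (2|F|)^{k-1}\cdot 2 \cdot e^{-sk(C_0 + D)} \cdot \Theta_{G/H}(s)^k \qquad \text{for all } s > \omega_G.
\]
Taking $k$-th roots, letting $k \to \infty$, and using $\Theta_G(s) < \infty$ on $s > \omega_G$, one extracts the $H$-uniform bound $\Theta_{G/H}(s) \leq e^{s(C_0 + D)}/(2|F|)$. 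Choosing $N$ large enough that $\ln(2|F|) > (C_0 + D)\omega_G$, and invoking divergence of $\Theta_G$ at $s = \omega_G$ (automatic for SCC actions with contracting elements), this forces a uniform $\omega_0 < \omega_G$, depending only on $(G,X,P)$, such that $\omega_{G/H} \leq \omega_0$ for every confined $H$ with confining set $P$.

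\emph{Main obstacle.} The critical point is Step 2, where injectivity of $\Phi_k$ must hold uniformly over $H$ despite the coset-dependent decorations $p_{\bar g}$. This demands that $F$ be chosen so that its translation length dominates $D$ while simultaneously satisfying $|F| \geq \exp((C_0 + D)\omega_G)$---a balance enabled by Yang's estimate on the exponential abundance of Schottky contracting elements in SCC actions. Converting the finite bound $\Theta_{G/H}(\omega_G) < \infty$ into a quantitative gap $\omega_{G/H} \leq \omega_0 < \omega_G$ (rather than the mere non-strict $\omega_{G/H} \leq \omega_G$) is the subtle quantitative step, for which the SCC divergence at $\omega_G$ is essential.
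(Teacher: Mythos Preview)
Your concatenation scheme is essentially the paper's proof of the \emph{negligible growth} result (Theorem~\ref{NegliGrowthThm}), not of the main growth tightness theorem, and the single binary bit you attach per coset cannot close the gap between the two.

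\textbf{The core problem is Step 3.} From the injection you obtain, for every $s>\omega_G$,
\[
\Theta_{G/H}(s)\ \le\ \frac{e^{s(C_0+D)}}{2|F|},
\]
and letting $s\downarrow\omega_G$ gives $\Theta_{G/H}(\omega_G)<\infty$. This says only that $\omega_{G/H}\le\omega_G$; convergence of a Poincar\'e series at a point never forces the critical exponent to lie strictly below that point. Divergence of $\Theta_G$ at $\omega_G$ does not help here: your inequality $\Theta_G(s)\ge(\cdots)\Theta_{G/H}(s)^k$ goes the wrong way to exploit it. Moreover, your proposed remedy ``choose $|F|$ large enough that $\ln(2|F|)>(C_0+D)\omega_G$'' is infeasible: $C_0$ must absorb the translation lengths of the elements of $F$ and hence grows with $|F|$; and if the inequality \emph{were} achievable it would yield $\Theta_{G/H}(\omega_G)<1$, contradicting the identity-coset term.

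\textbf{What the paper does instead.} The decisive idea missing from your outline is that the number of binary insertions must be \emph{linear in the length} of each coset representative, not one per representative. The paper uses the SCC hypothesis (Lemma~\ref{OutGrowTight}) to show that a generic representative $g$ admits a $(\theta,L,M)$-admissible decomposition $g=s_1\cdots s_m$ with $m\ge\theta\|g\|$, and then inserts a conjugate $f_ip_if_i^{-1}$ (chosen via Lemma~\ref{ExtLemConf} so that $g_{i-1}f_ip_if_i^{-1}g_{i-1}^{-1}\in H$) at each of the $m$ slots. This produces $2^{\theta\|g\|}$ distinct elements inside the single coset $Hg$, and the resulting Poincar\'e series comparison $\mathcal P(G,s)\ge\mathcal P(A,\rho(s))$ with $\rho(s)=s-\theta\ln(1+e^{-sR})<s$ is what manufactures the strict uniform gap (Lemma~\ref{CritGapLem}). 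Your scheme inserts $p$ once per $\bar g$, so the surplus is $2^k$ for $k$ representatives of arbitrary total length---a vanishing contribution per unit length.

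\textbf{Secondary issues.} (i) Confinement gives, for each $g$, some $p\in P$ with $gpg^{-1}\in H$, so the correct decoration is $\bar g\mapsto \bar g p_{\bar g}$ on the right (then $\bar g$ and $\bar g p_{\bar g}$ lie in the same right coset $H\bar g$); your left decoration $p_{\bar g}\bar g$ would require $p_{\bar g}\in H$. (ii) Even with the correct placement, injectivity of $\Phi_k$ is not automatic: you must rule out coincidences $\bar g = \bar g'\,p_{\bar g'}$ between distinct coset representatives, which is exactly why the paper passes to an $R$-separated subset of representatives in its Section~\ref{SecNegliGrowth} argument.
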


Recall that $G$ contains a unique  finite normal subgroup $E(G)$, and thus $G/E(G)$ has no nontrivial finite normal subgroup.
The following  corollary is immediate.
\begin{cor}
Suppose $G$ admits  a proper SCC action on a geodesic metric space $X$ with contracting elements. Assume   $E(G)=1$.  Then for any nontrivial confined subgroup $H<G$,  $$\omega_{G/H} < \omega_G.$$   
\end{cor}  

By \cite[Theorem B(3)]{YANG10}, SCC actions have \textit{purely exponential growth} (henceforth called a PEG action for brevity):
\begin{equation}\label{PEGdefn}
  |B(n)| \asymp \mathrm{e}^{n\omega_G}   
\end{equation}
where $\asymp$ denotes equality up to a multiplicative constant independent of  $n$. 
The assumption that the action is SCC in Theorem~\ref{MainThm} is essential and cannot be weakened to merely requiring purely exponential growth (PEG). Indeed, certain small cancellation quotients of geometrically finite actions on hyperbolic spaces exhibit no drop in growth rate (see \cite{YANG10}). These actions fail to satisfy the parabolic gap condition and therefore do not fall within the class of SCC actions.

To state the next corollary, we recall the following inequality established in \cite{CGYZ}, which generalizes earlier work of Coulon \cite{Coulon22} in the case of normal subgroups.
If  $G\act X$ has {purely exponential growth}, then for any nontrivial confined subgroup $H$ with  {non-degenerate}  confining subset, 
\begin{equation}\label{CoulonInequality}
\frac{\omega_{G/H}}{2} + \omega_{H}\ge \omega_G.   
\end{equation} 
   
As a consequence, we obtain the following corollary. 
\begin{cor}

Under the assumption of Theorem \ref{MainThm}, we have   $$\omega_{H}   \ge \omega_{G}-\omega_0/2>\omega_G/2.$$   
\end{cor}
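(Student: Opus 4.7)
The plan is to derive this corollary immediately by combining Theorem \ref{MainThm} with the inequality (\ref{CoulonInequality}) from \cite{CGYZ}, so the work is essentially algebraic manipulation once the two inputs are in hand. Rewriting (\ref{CoulonInequality}) to isolate $\omega_H$, I get
\[
\omega_H \ge \omega_G - \omega_{G/H}/2,
\]
and then I feed the strict bound $\omega_{G/H} < \omega_0$ supplied by Theorem \ref{MainThm} into the right-hand side to obtain $\omega_H > \omega_G - \omega_0/2$. Since $\omega_0 < \omega_G$ by the theorem, the final inequality $\omega_G - \omega_0/2 > \omega_G/2$ is automatic, yielding the two conclusions of the corollary in one stroke.

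Before carrying this out, I need to verify that the hypotheses of (\ref{CoulonInequality}) are satisfied in the present setting. The inequality as stated in the excerpt requires $\omega_G < \infty$, an action of divergent type, and a confined subgroup with non-degenerate confining subset. The last is given by assumption. For the first two, I would invoke the remark following Theorem \ref{MainThm}: SCC actions with contracting elements have purely exponential growth by \cite{YANG10}, which in particular implies $\omega_G < \infty$ and that the Poincar\'e series of $G$ diverges at the critical exponent $\omega_G$, i.e.\ the action is of divergent type. Hence (\ref{CoulonInequality}) is available.

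There is no substantive obstacle here: the corollary is a genuine corollary, with all the geometric content packaged inside Theorem \ref{MainThm} and all the analytic content inside (\ref{CoulonInequality}). The only point meriting a sentence of care in the write-up is the verification that the SCC hypothesis is strong enough to trigger the divergent-type hypothesis needed for (\ref{CoulonInequality}), which is exactly what the parenthetical remark after Theorem \ref{MainThm} is in place to supply.
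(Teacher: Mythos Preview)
Your proposal is correct and matches the paper's approach exactly: the paper simply states that the corollary follows from (\ref{CoulonInequality}) together with Theorem \ref{MainThm}, and your write-up spells out precisely that two-line algebraic deduction, including the check that the SCC hypothesis supplies the divergent-type condition needed for (\ref{CoulonInequality}). There is nothing to add.
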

This inequality $\omega_{H}>\omega_G/2$ was proved in \cite{CGYZ} via a completely different method using conformal measures on the boundary, and (\ref{CoulonInequality}) was established independently of that inequality. Thus, Theorem \ref{MainThm} offers an alternative proof.
Moreover, we emphasize that the  gaps $\omega_{G/H} < \omega_{G}$ and $\omega_{H} > \omega_{G}/2$, which are uniform over all confined subgroups $H$ sharing the same  confining set $P$, appear not to have been observed in previous works. 

\subsection{Applications to uniformly recurrent subgroups}\label{subsec:applications}
 Understanding the extent to which confined subgroups resemble, or differ from, normal subgroups is one of the main motivations for recent research. This question fits into the broader research theme of subgroup dynamics, concerning invariant random subgroups (IRS) introduced in \cite{AGV14} and uniformly recurrent subgroups (URS) in \cite{GW15}. 

 Let  $G$ be a locally compact, second countable topological group. The space $\mathrm{Sub}(G)$ of all closed subgroups in $G$, equipped with the Chabauty topology, is a compact metrizable space on which $G$ acts continuously by conjugation. When $G$ is discrete,  the Chabauty topology coincides with the product topology on $\{0,1\}^G$. Briefly, invariant random subgroups are random subgroups in $\mathrm{Sub}(G)$ whose distribution is a conjugation-invariant measure, and uniformly recurrent subgroups are topologically $G$-minimal systems in $\mathrm{Sub}(G)$ (i.e. every $G$-orbit is dense). Both IRSs and URSs serve as common generalizations of normal subgroups and lattices in semi-simple Lie groups. See \cite{ABBGNRS, FG23} for a more thorough discussion and relevant results in this direction. 
 
 We now return to the setting where $G$ is a discrete group and explain a connection between URSs and the confined subgroups studied in this paper.

 Let $\mathcal U\subset \mathrm{Sub}(G)$ be a \textit{nontrivial} $G$-minimal system; that is, $\mathcal{U}$ does not consist solely of the trivial subgroup  $\{1\}$, which is clearly a fixed point of $G$-conjugation action. Consequently, $\mathcal U$ is disjoint from some neighborhood of $\{1\}$, so there exists a finite set $P\subset G$ such that $H\cap (P\setminus \{1\})\ne \emptyset$ for every $H\in \mathcal U$. In particular, any URS $\mathcal U$ consists of confined subgroups with a common confining subset. Conversely, the closure of a confined subgroup in $\mathrm{Sub}(G)$ is disjoint from the trivial subgroup  (\cite[Proposition 10.2]{FG23}).


We derive the following corollary concerning the growth tightness of URSs.
\begin{cor}\label{URSCor}
Suppose $G$ admits  an SCC action on a proper geodesic metric space $X$ with contracting elements. Assume that $E(G)$ is trivial. Then every member of a uniformly  recurrent subgroup $\mathcal U$ is growth tight: there exists $\omega_0<\omega_G$ such that $\omega_{G/H}\le \omega_0$ for any $H\in \mathcal U$.     
\end{cor}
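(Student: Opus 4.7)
The plan is to show that the URS $\mathcal U$ admits a single finite non-degenerate confining subset $P$ simultaneously witnessing every one of its members as a confined subgroup of $G$, and then invoke Theorem \ref{MainThm} verbatim.

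First, I would extract $P$ via the Chabauty topology on $\mathrm{Sub}(G)$. Since $\mathcal U$ is a closed $G$-invariant minimal subset that does not reduce to the trivial subgroup $\{1\}$, it is disjoint from some Chabauty-open neighborhood of $\{1\}$. Under the identification of $\mathrm{Sub}(G)$ with a subspace of $\{0,1\}^G$ equipped with the product topology, such a neighborhood has the form $\{H : H \cap F = \emptyset\}$ for a finite set $F \subset G \setminus \{1\}$. Setting $P := F$, I obtain $H \cap P \neq \emptyset$ for every $H \in \mathcal U$. Because $\mathcal U$ is invariant under conjugation, the same conclusion applies to $g^{-1} H g \in \mathcal U$ for every $g \in G$, so $g^{-1} H g \cap P \setminus \{1\} \neq \emptyset$ holds for all $g$, which is precisely the definition of $H$ being confined with confining subset $P$. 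Moreover, since $E(G)$ is trivial by assumption and $1 \notin P$ by construction, $P$ is disjoint from $E(G)$, hence non-degenerate.

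With a common non-degenerate $P$ in hand, I then apply Theorem \ref{MainThm} to produce a single $\omega_0 < \omega_G$ such that $\omega_{G/H} < \omega_0$ for every confined subgroup of $G$ admitting $P$ as confining subset. Specializing to the members of $\mathcal U$ yields the uniform bound $\omega_{G/H} \le \omega_0$ asserted by the corollary.

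The one substantive point is the uniformity of $\omega_0$ over all members of $\mathcal U$, and this is already built into the formulation of Theorem \ref{MainThm}: the constant $\omega_0$ there depends only on the confining set $P$ and not on the particular confined subgroup. Consequently no further argument is required, and the corollary reduces entirely to the topological observation above together with one invocation of the main theorem. The hypothesis that $E(G)$ is trivial is used precisely to guarantee that the $P$ extracted from the Chabauty topology is non-degenerate; without it, one would need to remove elements of $E(G)$ from $P$ and verify that a non-degenerate confining set remains.
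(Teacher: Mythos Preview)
Your proposal is correct and matches the paper's own argument essentially verbatim: the paper derives the corollary directly from the discussion preceding it, observing that a nontrivial URS is disjoint from a Chabauty neighborhood of $\{1\}$ and hence admits a common finite confining subset $P$, after which Theorem~\ref{MainThm} (whose $\omega_0$ depends only on $P$) applies uniformly. Your write-up is in fact more detailed than the paper's, spelling out the form of the basic open neighborhood and the role of the triviality of $E(G)$.
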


It is interesting to compare with the relevant results  in IRSs.
In \cite[Theorem 36]{AGV14}, examples of IRSs in free groups are constructed to demonstrate the failure of growth tightness. Thus, unlike URSs, there is no uniform gap property for the quotient growth of IRSs (which are a family of subgroups). 

\subsection{Negligible quotient growth for confined subgroups in PEG actions}
The well-known Hopf decomposition splits a measure-class-preserving action into its conservative and dissipative components. We do not require the precise definitions here; rather, we simply note that, roughly speaking, this decomposition serves as a measurable counterpart to the decomposition of the space into the limit set and domain of discontinuity. In \cite{Can20}, Cannizzo proved that sofic IRSs in free groups act conservatively on the boundary almost surely. As shown in \cite{CGYZ},  confined subgroups also act conservatively   on the horofunction boundary with respect to Patterson-Sullivan measures. It turns out that conservative actions are closely related to a notion weaker than growth tightness.
\begin{defn}
We say that a subgroup $H < G$ has \textit{negligible quotient growth} if
\[
\frac{|B_{G/H}(n)|}{|B_G(n)|} \longrightarrow 0 
\quad \text{as } n \to \infty.
\]
\end{defn}
In \cite{GKN}, Grigorchuk–Kaimanovich–Naghibeda proved that, for free groups, negligible quotient growth of a subgroup $H$ is equivalent to the conservativity of the induced action on the Gromov boundary with respect to the Patterson–Sullivan measure. This result was later generalized to hyperbolic groups in \cite{CGYZ}. 
We expect that the equivalence between negligible quotient growth and conservative boundary action should hold in substantially greater generality.

Our last result provides a supporting evidence in this direction by establishing negligible quotient growth for any confined subgroup, provided that $G\act X$ has {purely exponential growth}. 
  
    \begin{thm}\label{NegliGrowthThm}
		Suppose $G$ admits  a proper PEG action on a geodesic metric space $X$ with contracting elements. Let $H\subset G$ be a nontrivial confined subgroup with a finite non-degenerate confining subset. Then $H$ has negligible quotient growth.
	\end{thm}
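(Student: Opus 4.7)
The plan is to leverage Patterson--Sullivan measure theory and the conservativity of $H$ to translate negligibility of quotient growth into an ergodic-theoretic statement, which I then quantify via the shadow lemma. Under PEG with contracting elements, \cite{YANG10} provides an $\omega_G$-conformal density $\mu$ on the horofunction boundary $\partial_h X$ together with a shadow lemma comparing the $\mu$-mass of shadows of orbit points to $e^{-\omega_G d(o,go)}$. The confining property of $H$ with non-degenerate $P$ yields, by \cite{CGYZ}, that $H$ acts conservatively on $(\partial_h X, \mu)$: no $H$-invariant wandering set has positive $\mu$-mass.

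Given this setup, I would then choose a set $D \subset G$ of minimal-length representatives of the left cosets $G/H$, so that $|B_{G/H}(n)| = |D \cap B(n)|$. By the shadow lemma, the partial Poincar\'e sum $\sum_{g \in D,\, d(o,go) \le n} e^{-\omega_G d(o,go)}$ is comparable to the $\mu$-mass of the union of shadows $\bigcup_{g \in D,\, d(o,go) \le n} \mathrm{Sh}(go)$ up to bounded multiplicity. Were this sum to grow at the full rate of $|B(n)| \asymp e^{n\omega_G}$, the corresponding union would form a measurable near-transversal for the $H$-action with positive asymptotic $\mu$-mass; conservativity forbids this via a Hopf--type decomposition, forcing the partial sum to grow at a strictly slower rate. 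A standard Tauberian argument then yields $|D \cap B(n)| = o(e^{n\omega_G}) = o(|B(n)|)$, which is the desired negligibility.

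The main obstacle is the Hopf dichotomy step. In hyperbolic and $\mathrm{CAT}(-1)$ settings this is classical (cf.\ \cite{GKN,CGYZ}), but in the broader PEG-plus-contracting-elements framework the boundary $\partial_h X$ is less tame, requiring a careful measurable selection of $D$ together with a sufficiently strong shadow lemma so that conservativity of $H$ quantitatively forces the partial-sum decay. The non-degeneracy of $P$ (disjointness from $E(G)$) enters to ensure that the conservativity is ``genuine'' --- i.e., not arising merely from the finite normal kernel $E(G)$ --- and to control the exceptional set where the shadow lemma might fail. A parallel direct combinatorial strategy, iterating the confining extension $g \mapsto p_g g$ and inserting powers of a contracting element $f$ to produce distinct representatives in $gH \cap B(n+C)$, appears insufficient on its own because linearly many representatives cannot overcome the exponential PEG denominator; hence the boundary route seems essential.
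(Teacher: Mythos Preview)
Your proposal takes a route the paper deliberately avoids, and the combinatorial strategy you dismiss as ``insufficient'' is precisely the one the paper makes work. The key idea you miss is that one should not merely insert linearly many elements into a single representative, but rather concatenate \emph{arbitrarily long words} over the set of representatives. Concretely, the paper takes a maximal $R$-separated subset $\tilde A$ of the shortest $H$-coset representatives and defines
\[
\Phi:\bigcup_{n\ge 1}(\tilde A)^n\longrightarrow G,\qquad (a_1,\ldots,a_n)\longmapsto a_1 f_1 p_1 f_1^{-1}\cdots a_n f_n p_n f_n^{-1},
\]
with each $f_ip_if_i^{-1}$ chosen via Lemma~\ref{ExtLemConf} so that the path is admissible and $a_if_ip_if_i^{-1}\in Ha_i$. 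Injectivity is forced by minimality of the representatives: a first disagreement $a_1\ne a_1'$ combined with $r$-fellow-travel and $h_1a_1=a_1f_1p_1f_1^{-1}\in Ha_1$ would yield $d(o,Ha_1'o)>d(o,a_1'o)$, a contradiction. Since $\Phi$ embeds a free semigroup over $\tilde A$ into $G$, the Poincar\'e series of $\tilde A$ converges at $\omega_{\Phi(\mathcal W)}\le\omega_G$ by \cite[Lemma~2.23]{YANG10}, giving $|\tilde A\cap B(n)|\mathrm e^{-\omega_G n}\to 0$ directly. No boundary, no shadow lemma, no PEG beyond the final comparison with $|B(n)|$.

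Your boundary route has a genuine gap exactly where you flag it. The implication ``$H$ acts conservatively on $(\partial_h X,\mu)$ $\Rightarrow$ $|B_{G/H}(n)|=o(\mathrm e^{n\omega_G})$'' is established in \cite{GKN} for free groups and in \cite{CGYZ} for hyperbolic groups, but \emph{not} in the general PEG-with-contracting-elements setting; the paper explicitly says it ``believes'' this characterization extends and offers Theorem~\ref{NegliGrowthThm} as ``supporting evidence towards it,'' not as a corollary of it. Invoking that implication here is therefore either circular or an appeal to an open problem. Your intermediate step---that a non-negligible family of representative shadows would constitute a positive-mass wandering set---also needs work: shadows of shortest representatives in distinct cosets need not be pairwise disjoint or of bounded multiplicity on $\partial_h X$ without additional separation, and the horofunction boundary in this generality lacks the visibility/quasiconformal structure that makes the hyperbolic argument go through.
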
 
    As aforementioned,  the PEG actions in general do \textbf{not} imply growth tightness (\cite{YANG10}) and the negligible quotient growth is also weaker than growth tightness.  

    \subsection{Proof outline of    Theorem \ref{MainThm}}
    
    Let $[G/H]$ denote a complete set of right coset representatives of $H$ in $G$. 
Fix a basepoint $o \in X$ and write $\|g\| = d(o, go)$. Our goal is to prove that the growth rate of $[G/H] \subset G$ is strictly smaller than that of $G$. The first step allows us to restrict attention to the intersection of $[G/H]$ with a suitable class of linearly recurrent elements we now explain.

\subsubsection{\textbf{Genericity of linearly recurrent elements}}

We prove that a set of linearly recurrent elements is exponentially generic.

Let $\theta \in (0,1]$ and $M,L>0$. An element $g \in G$ is called \emph{$(\theta, L, M)$-linearly recurrent} if it admits a product decomposition
\[
g = s_1 \cdots s_m
\]
for some $m \ge \theta \|g\|$ such that the orbit points associated to the partial products 
\[
g_i := s_1 \cdots s_i \quad (1 \le i \le m)
\]
are $L$-separated and lie in $N_M([o, go])$. Roughly speaking, the segment $[o, go]$ spends at least a $\theta$-proportion of its length inside $N_M(Go)$.  See Definition~\ref{QconvexProperty}.

The key statement, proved in Lemma~\ref{OutGrowTight}, asserts that there exists a constant $M>0$ such that for all sufficiently large $L>0$ and $\theta = O(L^{-1})$, the set of $(\theta, L, M)$-linearly recurrent elements is exponentially generic in $G$.  

The dynamic nature of the terminology we adopt is motivated by the geodesic flow: when projected onto the quotient $X/G$, the geodesic segment $[o,go]$ is recurrent to the image of $N_M(Go)$ for a $\theta$-linear proportion of time. In this context, genericity   emerges as a consequence of the ergodic theorem when the geodesic flow is ergodic. In the geometric setup, the proof of the above result relies on the assumption that the action $G \act X$ is SCC (see Subsection~\ref{SSecSCC} for the precise definition).

\subsubsection{\textbf{Construction of the map}}

Let $g \in [G/H]$ be a $(\theta, L, M)$-linearly recurrent element with decomposition
$g = s_1 s_2 \cdots s_m$
as above. We insert suitable contracting elements into this product decomposition at positions specified by a binary vector 
\[
\epsilon = (\epsilon_0, \dots, \epsilon_{m-1}) \in \{0,1\}^m:
\]
whenever $\epsilon_i = 1$, we insert an appropriate element between $s_i$ and $s_{i+1}$.

More precisely, define a map
\[
\Phi_g : \{0,1\}^m \to Hg
\]
by
\[
\Phi_g(\epsilon_0, \dots, \epsilon_{m-1})
= \prod_{i=0}^{m-1} (f_i p_i f_i^{-1})^{\epsilon_i} s_{i+1}.
\]
Here $f_i$ are chosen from a fixed finite set $F$ of contracting elements, and $p_i \in P$ are selected from the \emph{confining} subset $P$ so that
\[
g_i f_i p_i f_i^{-1} g_i^{-1} \in H.
\]
Here $F$ depends only on $P$ rather than $H$: this is the point to get the gap independent of $H$ in Theorem~\ref{MainThm}.
This choice ensures that $\Phi_g(\epsilon) \in Hg$ (see Lemma~\ref{MaptoCoset}). Following the approach of \cite{CGYZ} (reviewed in \textsection~\ref{SSecCGYZ}), we prove that the resulting word labels a quasi-geodesic (see Lemma~\ref{AugPathAdm}). Furthermore, for any fixed $\theta$, provided $L$ is sufficiently large, the map $\Phi_g$ is injective (see Lemma~\ref{PhiInj}).

\subsubsection{\textbf{Completion of the proof}}

Since $\Phi_g(\{0,1\}^m) \subset Hg$, the images corresponding to distinct $(\theta, L, M)$-linearly recurrent elements $g \in [G/H]$ are disjoint in $G$. As $m \ge \theta \|g\|$, the injectivity of $\Phi_g$ yields $2^m$ distinct elements in $Hg$ for each such $g$.

The resulting exponential amplification by a factor of $2^{\theta \|g\|}$ shows that the growth rate of $[G/H]$ is strictly smaller than $\omega_G$. See Lemma~\ref{CritGapLem} for the abstract growth gap criterion. This completes the proof of growth tightness in Theorem~\ref{MainThm}.
\\
\paragraph{\textbf{Related works.}}
To conclude the Introduction, let us compare the above strategy with the classical approach originating in \cite{Sam2,DPPS} (further developed in \cite{ACTao,YANG10}). The arguments in \cite{Sam2,DPPS} rely crucially on the assumption that $H$ is a normal subgroup. In that setting, one key consequence is that $[G/H]$ is \textit{contained} in the set of elements of $G$ that avoid a fixed element $g$ as a subword (see \cite[Corollary 4.6]{YANG10}). The growth tightness of the latter set   proved  in \cite{YANG10} completes the proof. 

This approach unfortunately breaks down when $H$ is merely a general confined subgroup, as the corresponding subword-avoidance property no longer holds. Furthermore, the growth gap criterion employed here in Lemma~\ref{CritGapLem} differs substantially from that of \cite[Lemma 2.23]{YANG10}.  The  strategy presented here was developed following the solution by Lihuang Ding and Kairui Liu to the case of Theorem~\ref{MainThm} for confined subgroups of free groups; their solution is included in the appendix.   

    
     \subsubsection*{\textbf{Organization of the paper}} Section~\ref{SecPrelim} contains the necessary preliminaries: we recall the definition of contracting geodesics and statistically convex-cocompact actions, prove in Lemma~\ref{OutGrowTight} that generic elements are linearly recurrent, and establish a gap criterion for growth rates (Lemma~\ref{CritGapLem}). In Section~\ref{SecNegliGrowth}, we give a variant of the extension lemma adapted to confined subgroups (Lemma~\ref{ExtLemConf}) and prove that the quotient of a confined subgroup has negligible growth (Theorem~\ref{NegligibleGrowthThm}). Section~\ref{SecGrowthTight} is then dedicated to the growth tightness of linearly recurrent elements modulo a confined subgroup (Theorem~\ref{AGroTig}), from which Theorem~\ref{MainThm} follows. The paper concludes with an appendix by Lihuang Ding and Kairui Liu, which presents two alternate proofs of Theorem~\ref{MainThm} for confined subgroups in free groups.
     \ack
     The second-named author thanks  Inhyeok Choi, Ilya Gekhtman and Tianyi Zheng for helpful discussions during the project  \cite{CGYZ}. Theorem~\ref{MainThm} was originally posed with a negative answer anticipated (as in random invariant subgroups). This work began after (surprising) proofs  in free groups of Theorem \ref{MainThm} were found independently by the first-named author and Kairui Liu.

	\section{Preliminaries}\label{SecPrelim}
    \subsection{Notation and convention}\label{subsec:notation}
    Denote by $\str$ the set of binary strings of length $m$, i.e.
\[
\bar\epsilon\in\str \quad\Longleftrightarrow\quad 
\bar\epsilon=(\epsilon_0,\epsilon_1,\dots,\epsilon_{m-1})\ \text{ with }\ \epsilon_i\in\{0,1\}.
\]
Set
\[
\|\bar\epsilon\|:=\sum_{i=1}^m \epsilon_i .
\]
Observe that $\str$ can be identified with the power set $P(m)$ of $\{0,1,2,\dots,m-1\}$ via the bijection
\[
\bar\epsilon \longmapsto I(\bar\epsilon):=\{\,i:\ 0\le i\le m-1,\ \epsilon_i=1\,\}\in P(m).
\]
In particular, $|I(\bar\epsilon)|=\|\bar\epsilon\|$.

Let $(\U, d)$ be a proper geodesic metric space.     
Let $\alpha :[s,t]\subset \mathbb R\to\U$ be a path parametrized by arc-length from the initial point $\alpha^-:=\alpha(s)$ to terminal point $\alpha^+:=\alpha(t)$. 
    Given two points $x,y$ lying on the image of $\alpha$, denote by $[x,y]_\alpha$ the parametrized subpath of $\alpha$ from $x$ to $y$.
For two points $x,y\in \U$, we write $[x, y]$ for a choice of geodesic between $x$ and $y$. 
 
A path $\alpha$ is called a \textit{$c$-quasi-geodesic} for $c\ge 1$ if for   any rectifiable subpath $\beta$,
$$\len(\beta)\le c \cdot d(\beta^-, \beta^+)+c$$
 where   $\len(\beta)$ denotes the length of $\beta$. 
 
Denote by $\alpha\cdot \beta$ (or simply $\alpha\beta$) the concatenation of two paths $\alpha, \beta$  provided that $\alpha^+ = \beta^-$.

    We frequently construct a path labeled by a word $(g_1, g_2,\cdots,g_n)$, which by convention means the following concatenation
$$
[o,g_1o]\cdot g_1[o,g_2o]\cdots (g_1\cdots g_{n-1})[o,g_no]
$$
where the basepoint $o$ is understood from context. With this convention, the paths labeled by $(g_1,g_2,g_3)$ and $(g_1g_2, g_3)$ differ in general, depending on whether  $[o,g_1o]\cdot g_1[o,g_2o]$ is a geodesic. 

	\subsection{Contracting geodesics}
Let $Z$ be a closed subset of $\U$ and $x$ be a point in $\U$.  Denote by $d(x, Z)$ the distance between $x$ and $Z$, \emph{i.e.} 
\[
d(x, Z) : = \inf \big \{ d(x, y): y \in Z \big \}. 
\]
Let
\[ \pi_{Z}(x) : = \big \{ y\in Z: d(x, y) = d(x, Z) \big \} \]
be the set of closet point projections from $x$ to $Z$, and let $N_r(Z) : = \{y\in \U: d(y, Z) \le r\}$ be the closed $r$-neighborhood of $Z$. Since $X$ is proper, 
$\pi_{Z}(x)$ is nonempty. We refer to $\pi_{Z}(x) $ as the \emph{projection set} of $x$ to $Z$. Define $\proj_Z(x,y):=\diam{\pi_Z(x)\cup \pi_Z(y)}$. Then $\proj_Z$ satisfies the triangle inequality: for any closed subset $Z\subset \U$ and any $x, y, z\in \U$, $$\proj_Z(x,z) \le \proj_Z(x,y) + \proj_Z(y, z).$$

\begin{defn} \label{Def:Contracting}
We say that a closed subset $Z \subseteq X$ is \emph{$C$-contracting} for a constant $C>0$ if,
for all pairs of points $x, y \in \U$, we have
\[
d(x, y) \leq d(x, Z) \quad  \Longrightarrow  \quad \proj_Z(x,y) \leq  C.
\]
Such a $C$ is called a \emph{contracting constant} for $Z$. A collection of $C$-contracting subsets is referred to as a $C$-contracting system.

An element $h\in \isom(\U)$ is called \textit{contracting} if it acts by translation on a contracting bi-infinite quasi-geodesic. Equivalently, the map $n\in \mathbb Z\longmapsto h^no$ is a quasi-geodesic whose image is contracting.  
\end{defn}


The contracting property admits several equivalent characterizations (see e.g. \cite[Lemma 3.2]{BF1},\cite[Lemma 2.4]{Sisto}). When we speak of a $C$-contracting property, the constant $C$ is assumed to satisfy  the following three statements.

\begin{lem}\label{BigThree}
Let $U$ be a contracting subset. Then there exists $C>0$ such that 
\begin{enumerate}
\item
If $d(\gamma,
U) \ge C$ for a geodesic  $\gamma$, then 
$\diam{\pi_U (\gamma)}  \le C$.
\item
If $\diam{\pi_U (\gamma)}  \ge  C$, then $d(\pi_U(\gamma^-),\gamma)\le C$ and $d(\pi_U(\gamma^+),\gamma)\le C$.
\item
For any metric ball $B$ disjoint from $U$, we have $\diam{\pi_U(B)}\le C$.
\end{enumerate}
\end{lem}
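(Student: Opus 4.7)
The plan is to enlarge $C$ from the one-sided contracting constant in Definition \ref{Def:Contracting} and derive all three assertions simultaneously. As a convenient intermediate step I would first establish a two-sided form of the contracting property: for any $x, y \in X$, either the geodesic $[x, y]$ meets the $C$-neighborhood of $U$, or $\proj_U(x, y) \le C$. This would be proved by subdividing $[x, y]$ into consecutive pieces $[x_{i-1}, x_i]$ short enough that the axiom of Definition \ref{Def:Contracting} applies to each pair, namely with $d(x_{i-1}, x_i) \le d(x_{i-1}, U)$; if the entire geodesic stays $C$-far from $U$, such a subdivision exists throughout $[x, y]$ and the accumulated projection drift is absorbed by choosing $C$ large.

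Given this intermediate statement, (1) is immediate: if $d(\gamma, U) \ge C$, then every sub-geodesic $[x, y]_\gamma$ stays $C$-far from $U$, so $d(\pi_U(x), \pi_U(y)) \le C$ for all $x, y \in \gamma$, whence $\|\pi_U(\gamma)\| \le C$. For (2), the contrapositive of the intermediate claim forces $\gamma$ to enter a $C$-neighborhood of $U$ once $\|\pi_U(\gamma)\| > C$; taking a point $z \in \gamma$ within $C$ of $U$ and applying the axiom along $[\gamma^-, z]$ shows that $\pi_U(\gamma^-)$ lies within $C$ of $z$, hence within $C$ of $\gamma$, and symmetrically at $\gamma^+$. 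For (3), given a ball $B$ disjoint from $U$ and two points $x, y \in B$, I would apply the intermediate claim to $[x, y]$: if it avoids the $C$-neighborhood of $U$ we conclude $\proj_U(x, y) \le C$ at once, and otherwise the entry and exit points of $[x, y]$ into this neighborhood together with disjointness of $B$ from $U$ reduce the problem to a direct application of the one-sided axiom with $d(x, y) \le d(x, U)$.

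The main obstacle is the intermediate two-sided statement. A naive iterated use of the one-sided axiom accumulates error with the length of $[x, y]$, so the subdivision must be combined with a halving or maximality argument to keep the projection gap bounded uniformly in length. This is a standard tool in the theory of contracting subsets (compare \cite{YANG10}), and I would invoke it directly rather than re-derive it in detail here; the final $C$ is then the maximum of the constants emerging from each of the three derivations.
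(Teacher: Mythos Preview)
The paper does not actually supply a proof of this lemma; it is stated as a list of well-known equivalent formulations of the contracting property, with the implicit understanding that the constant $C$ is enlarged as needed. So there is no proof in the paper to compare your proposal against, and your decision to invoke the two-sided statement from \cite{YANG10} is in the same spirit.

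Your outline for (1) and (2) is essentially correct. The two-sided form (either $[x,y]$ enters $N_C(U)$ or $\proj_U(x,y)\le C$) is indeed the right intermediate tool, and (1) follows immediately. For (2) your sketch works once you take $z$ to be the \emph{first} entry point of $\gamma$ into $N_C(U)$ and apply the intermediate claim to $[\gamma^-,z']$ for $z'$ just before $z$; the constants double but that is harmless.

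Your argument for (3), however, does not work as written and is in any case unnecessary. In the ``otherwise'' branch you assert a reduction to the one-sided axiom with $d(x,y)\le d(x,U)$, but disjointness of $B$ from $U$ does not yield this: two points $x,y$ near the boundary of $B$ on opposite sides can have $d(x,y)$ close to the diameter while $d(x,U)$ is arbitrarily small. The clean route bypasses the intermediate claim entirely. If $B=B(p,r)$ is disjoint from $U$ then $d(p,U)\ge r$, so for every $x\in B$ one has $d(p,x)\le r\le d(p,U)$, and Definition~\ref{Def:Contracting} applied to the pair $(p,x)$ gives $\proj_U(p,x)\le C$. The triangle inequality then yields $\proj_U(x,y)\le 2C$ for all $x,y\in B$.
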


Contracting subsets are Morse, and the property is preserved up to finite Hausdorff distance. The following properties will be used later. Their proofs follow easily from Lemma \ref{BigThree} and left to the interested reader.
\begin{lem}\label{BigFive}
Let $U\subseteq \U$ be a  $C$-contracting subset for $C>0$. Then the following hold.

\begin{enumerate}


\item
For any  geodesic $\gamma$, we have $$\big |\proj_U(\gamma^-,\gamma^+)- \diam{\pi_U(\gamma)}\big|\leq 4C.$$

\item
For any $y,z\in \U$, $\proj_U(y, z)\le d(y, z)+ 2C$ .

\end{enumerate}
\end{lem}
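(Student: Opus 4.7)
The plan is to deduce both statements from an entry/exit analysis of the geodesic relative to the closed $C$-neighborhood of $U$, then invoke the bounded-projection property of Lemma~\ref{BigThree}(1) on the off-neighborhood portions.

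For part (1), I would write $a=\gamma^-$ and $b=\gamma^+$, and split on whether $\gamma$ meets $N_C(U)$. If it does not, then $d(\gamma,U)\ge C$, so Lemma~\ref{BigThree}(1) bounds $\diam{\pi_U(a)\cup\pi_U(b)}\le C$ while $\diam{\gamma\cap N_C(U)}=0$, and the estimate holds trivially. Otherwise let $x$ and $y$ denote the first and last points of $\gamma$ lying in the closed $C$-neighborhood of $U$; since $\gamma$ is a geodesic, $\diam{\gamma\cap N_C(U)}=d(x,y)$. Each of the subpaths $[a,x]_\gamma$ and $[y,b]_\gamma$ satisfies $d(\cdot,U)\ge C$ along its length, so Lemma~\ref{BigThree}(1) yields $\diam{\pi_U([a,x]_\gamma)}\le C$ and $\diam{\pi_U([y,b]_\gamma)}\le C$. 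Because $d(x,U),\,d(y,U)\le C$, every $p\in\pi_U(x)$ and $q\in\pi_U(y)$ satisfy $|d(p,q)-d(x,y)|\le 2C$ by the triangle inequality. Chaining these bounds across $\pi_U(a)\to\pi_U(x)\to\pi_U(y)\to\pi_U(b)$ produces $|d_U(a,b)-d(x,y)|\le 4C$, as required.

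For part (2), I would set $\hat C=4C$ and apply part (1) to a geodesic $[y,z]$: since $\diam{[y,z]\cap N_C(U)}\le d(y,z)$, part (1) immediately gives $d_U(y,z)\le d(y,z)+4C$. The main bookkeeping hazard is that $\pi_U(\cdot)$ returns a set rather than a point, so $d_U$ must be tracked as the diameter of a union of projection sets throughout; one needs to simultaneously control the internal diameters of $\pi_U(a)$, $\pi_U(x)$, $\pi_U(y)$, $\pi_U(b)$ (each $\le C$ via Lemma~\ref{BigThree}(3) applied to a degenerate ball) and the pairwise distances between them, all using the same contracting constant $C$, so that the cumulative additive error remains $4C$ without inflation.
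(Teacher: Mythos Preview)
Your argument is correct. The paper states Lemma~\ref{BigFive} without proof as a standard property of contracting subsets, so there is no authors' proof to compare against; your entry/exit decomposition together with Lemma~\ref{BigThree}(1) is exactly the intended elementary verification, and deducing (2) from (1) with $\hat C=4C$ is fine. One small remark: in the boundary case where $a$ (or $b$) already lies in $N_C(U)$ you have $x=a$ (resp.\ $y=b$), and then $\diam{\pi_U(a)\cup\pi_U(x)}=\diam{\pi_U(a)}\le 2d(a,U)\le 2C$, which still feeds into the $4C$ total; it is worth making this explicit rather than invoking Lemma~\ref{BigThree}(3) on a degenerate ball.
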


A group is called \textit{elementary} if it is virtually $\mathbb Z$ or finite. 
Let $h\in G$ be a contracting element. Define the coarse orbit stabilizer  $$E(h):=\{g\in G: d_H(\langle h\rangle o, g \langle h\rangle o)<\infty\},$$
where $d_H$ denotes the Hausdorff distance.
Any elementary subgroup containing $h$ is clearly contained in $E(h)$. In fact, $E(h)$ is the maximal such subgroup and admits the following algebraic description.
\begin{lem}\cite[Lemma 2.11]{YANG10}\label{elementarygroup}
For a contracting element $h\in G$,  $E(h) $ is  the maximal elementary subgroup containing $h$ and satisfies
$$
E(h)=\{g\in G: \exists n\in \mathbb N_{> 0}, (\;gh^ng^{-1}=h^n)\; \lor\;  (gh^ng^{-1}=h^{-n})\}
$$
\end{lem} 

Fixing the basepoint $o\in\U$, the \textit{axis} of $h$  is defined as the following quasi-geodesic: 
\begin{equation}\label{axisdefn}
\ax(h)=\{f o: f\in E(h)\}.
\end{equation}
Note that $\ax(h)=\ax(k)$ and $E(h)=E(k)$    for any contracting element   $k\in E(h)$. Given two contracting elements $g, h\in G$, we say that they are \emph{$B$-independent} (or simply independent if such a $B$ exists ) if there exists $B>0$ such that $$\proj_U(V)\le B$$ for any $U = p\ax(g)$ and $V = q\ax(h)$. A set of contracting elements is said to consist of independent elements if they are pairwise independent.

\subsection{Extension Lemma}

{We fix a finite set $F \subseteq G$ of independent $C$-contracting elements and let $\f = \{g \ax(f) : f \in F, g \in G\}$ be the collection of all left translates of their axes.} The following notion of an admissible path provices a way to construct   a quasi-geodesic  by concatenating geodesics via elements of $\f$.
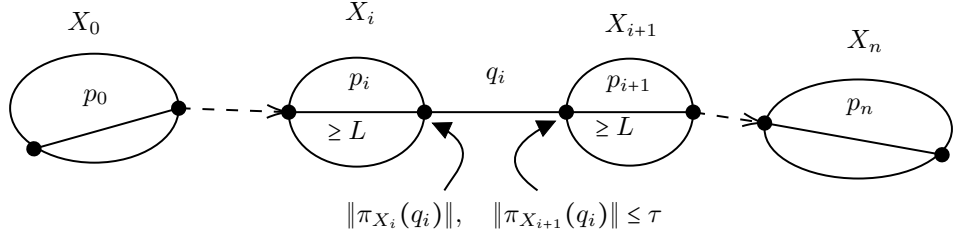
\begin{figure}
    \centering

\tikzset{every picture/.style={line width=0.75pt}} 

\begin{tikzpicture}[x=0.75pt,y=0.75pt,yscale=-1,xscale=1]

\draw   (80.5,98.5) .. controls (80.5,83.31) and (99.53,71) .. (123,71) .. controls (146.47,71) and (165.5,83.31) .. (165.5,98.5) .. controls (165.5,113.69) and (146.47,126) .. (123,126) .. controls (99.53,126) and (80.5,113.69) .. (80.5,98.5) -- cycle ;
\draw   (221,100.5) .. controls (221,85.31) and (236.33,73) .. (255.25,73) .. controls (274.17,73) and (289.5,85.31) .. (289.5,100.5) .. controls (289.5,115.69) and (274.17,128) .. (255.25,128) .. controls (236.33,128) and (221,115.69) .. (221,100.5) -- cycle ;
\draw   (461,109) .. controls (461,95.19) and (482.15,84) .. (508.25,84) .. controls (534.35,84) and (555.5,95.19) .. (555.5,109) .. controls (555.5,122.81) and (534.35,134) .. (508.25,134) .. controls (482.15,134) and (461,122.81) .. (461,109) -- cycle ;
\draw   (361,100.5) .. controls (361,85.59) and (375.33,73.5) .. (393,73.5) .. controls (410.67,73.5) and (425,85.59) .. (425,100.5) .. controls (425,115.41) and (410.67,127.5) .. (393,127.5) .. controls (375.33,127.5) and (361,115.41) .. (361,100.5) -- cycle ;
\draw  [dash pattern={on 4.5pt off 4.5pt}]  (165.5,98.5) -- (219,100.43) ;
\draw [shift={(221,100.5)}, rotate = 182.06] [color={rgb, 255:red, 0; green, 0; blue, 0 }  ][line width=0.75]    (10.93,-3.29) .. controls (6.95,-1.4) and (3.31,-0.3) .. (0,0) .. controls (3.31,0.3) and (6.95,1.4) .. (10.93,3.29)   ;
\draw  [dash pattern={on 4.5pt off 4.5pt}]  (425,100.5) -- (459.02,105.7) ;
\draw [shift={(461,106)}, rotate = 188.69] [color={rgb, 255:red, 0; green, 0; blue, 0 }  ][line width=0.75]    (10.93,-3.29) .. controls (6.95,-1.4) and (3.31,-0.3) .. (0,0) .. controls (3.31,0.3) and (6.95,1.4) .. (10.93,3.29)   ;
\draw    (289.5,100.5) -- (361,100.5) ;
\draw [shift={(361,100.5)}, rotate = 0] [color={rgb, 255:red, 0; green, 0; blue, 0 }  ][fill={rgb, 255:red, 0; green, 0; blue, 0 }  ][line width=0.75]      (0, 0) circle [x radius= 3.35, y radius= 3.35]   ;
\draw [shift={(289.5,100.5)}, rotate = 0] [color={rgb, 255:red, 0; green, 0; blue, 0 }  ][fill={rgb, 255:red, 0; green, 0; blue, 0 }  ][line width=0.75]      (0, 0) circle [x radius= 3.35, y radius= 3.35]   ;
\draw    (299.5,140) .. controls (305.29,130.35) and (319,118.84) .. (296.13,107.26) ;
\draw [shift={(293.5,106)}, rotate = 24.36] [fill={rgb, 255:red, 0; green, 0; blue, 0 }  ][line width=0.08]  [draw opacity=0] (8.93,-4.29) -- (0,0) -- (8.93,4.29) -- cycle    ;
\draw    (346.5,140) .. controls (338.7,134.15) and (322.34,121.65) .. (353.04,106.19) ;
\draw [shift={(355.5,105)}, rotate = 154.8] [fill={rgb, 255:red, 0; green, 0; blue, 0 }  ][line width=0.08]  [draw opacity=0] (8.93,-4.29) -- (0,0) -- (8.93,4.29) -- cycle    ;
\draw    (92.5,119) -- (165.5,98.5) ;
\draw [shift={(165.5,98.5)}, rotate = 344.31] [color={rgb, 255:red, 0; green, 0; blue, 0 }  ][fill={rgb, 255:red, 0; green, 0; blue, 0 }  ][line width=0.75]      (0, 0) circle [x radius= 3.35, y radius= 3.35]   ;
\draw [shift={(92.5,119)}, rotate = 344.31] [color={rgb, 255:red, 0; green, 0; blue, 0 }  ][fill={rgb, 255:red, 0; green, 0; blue, 0 }  ][line width=0.75]      (0, 0) circle [x radius= 3.35, y radius= 3.35]   ;
\draw    (221,100.5) -- (289.5,100.5) ;
\draw [shift={(221,100.5)}, rotate = 0] [color={rgb, 255:red, 0; green, 0; blue, 0 }  ][fill={rgb, 255:red, 0; green, 0; blue, 0 }  ][line width=0.75]      (0, 0) circle [x radius= 3.35, y radius= 3.35]   ;
\draw    (361,100.5) -- (425,100.5) ;
\draw [shift={(425,100.5)}, rotate = 0] [color={rgb, 255:red, 0; green, 0; blue, 0 }  ][fill={rgb, 255:red, 0; green, 0; blue, 0 }  ][line width=0.75]      (0, 0) circle [x radius= 3.35, y radius= 3.35]   ;
\draw    (461,106) -- (550.5,122) ;
\draw [shift={(550.5,122)}, rotate = 10.14] [color={rgb, 255:red, 0; green, 0; blue, 0 }  ][fill={rgb, 255:red, 0; green, 0; blue, 0 }  ][line width=0.75]      (0, 0) circle [x radius= 3.35, y radius= 3.35]   ;
\draw [shift={(461,106)}, rotate = 10.14] [color={rgb, 255:red, 0; green, 0; blue, 0 }  ][fill={rgb, 255:red, 0; green, 0; blue, 0 }  ][line width=0.75]      (0, 0) circle [x radius= 3.35, y radius= 3.35]   ;

\draw (108,48.4) node [anchor=north west][inner sep=0.75pt]    {$X_{0}$};
\draw (248,43.4) node [anchor=north west][inner sep=0.75pt]    {$X_{i}$};
\draw (379,49.4) node [anchor=north west][inner sep=0.75pt]    {$X_{i+1}$};
\draw (501,57.4) node [anchor=north west][inner sep=0.75pt]    {$X_{n}$};
\draw (200,144.4) node [anchor=north west][inner sep=0.75pt]    {$\diam{ \pi _{X_{i}}( q_{i}) } \leq \tau,$};
\draw (322,144.4) node [anchor=north west][inner sep=0.75pt]    {$\diam{ \pi _{X_{i+1}}( q_{i}) } \leq \tau $};
\draw (250,79.4) node [anchor=north west][inner sep=0.75pt]    {$p_{i}$};
\draw (116,87.4) node [anchor=north west][inner sep=0.75pt]    {$p_{0}$};
\draw (380,80.4) node [anchor=north west][inner sep=0.75pt]    {$p_{i+1}$};
\draw (319,76.4) node [anchor=north west][inner sep=0.75pt]    {$q_{i}$};
\draw (501,92.4) node [anchor=north west][inner sep=0.75pt]    {$p_{n}$};
\draw (239,103.4) node [anchor=north west][inner sep=0.75pt]    {$\geq L$};
\draw (374,101.4) node [anchor=north west][inner sep=0.75pt]    {$\geq L$};

\end{tikzpicture}
    \caption{Admissible path}
    \label{fig:admissiblepath}
\end{figure}
\begin{defn}[Admissible Path]\label{AdmDef}
Given $L,\tau\geq0$, a path $\gamma$ is called $(L,\tau)$-\textit{admissible} in $\U$ if $\gamma$ is a concatenation of geodesics $p_0q_1p_1\cdots q_np_n$ $(n\in\mathbb{N})$ such that the two endpoints of each $p_i$ lie in some $X_i\in \f$, and   the following   \textit{Long Local} and \textit{Bounded Projection} properties hold:
\begin{enumerate}
\item[(LL)] Each $p_i$  for $1\le i< n$ has length greater than $L$, while $p_0$ and $p_n$ could be trivial;
\item[(BP)] For each $X_i$, we have $X_i\ne X_{i+1}$ and $$\max\{\diam{\pi_{X_i}(q_i)},\diam{\pi_{X_i}(q_{i+1})}\}\leq\tau,$$ where by convention $q_0:=\gamma^-$ and $q_{n+1}:=\gamma^+$.
\end{enumerate} 
The collection $\{X_i: 1\le i\le n\}$ is referred to as the contracting subsets associated with the admissible path.
\end{defn}

\begin{rem}\label{ConcatenationAdmPath}
    The geodesics $q_i$ could are allowed to be trivial. In that case condition (BP) reduces to checking $X_i\ne X_{i+1}$. Admissible paths can be concatenated as follows: let $p_0q_1p_1\cdots q_np_n$ and $p_0'q_1'p_1'\cdots q_n'p_n'$ be $(L,\tau)$-admissible. If $p_n=p_0'$ has length greater than $L$, then the concatenation $(p_0q_1p_1\cdots q_np_n)\cdot (q_1'p_1'\cdots q_n'p_n')$ naturally inherits an $(L,\tau)$-admissible structure.  
\end{rem}

    The following lemma is frequently used to verify $X_i\neq X_{i-1}$ in property (BP):
    \begin{lem}\label{LL2ImplyBP}
        Using notations in the definition \ref{AdmDef} of admissible path, for all $1\le i\le n$, if the following hold $$\max\{ \diam{\pi_{X_{i}}(q_{i})}, \diam{\pi_{X_{i-1}}(q_{i})}\}\leq\tau \quad \text{and} \quad \ell(q_{i}) > \tau,$$ then $X_i \neq X_{i-1}$.
    \end{lem}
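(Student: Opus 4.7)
The plan is to argue by contradiction. Suppose $X_i = X_{i-1}$ and derive that $\diam{\pi_{X_i}(q_i)} > \tau$, contradicting the hypothesis $\diam{\pi_{X_i}(q_i)} \leq \tau$.

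The key observation is that the endpoints of $q_i$ already lie on the contracting subsets in question. Concretely, from the concatenation structure $p_0 q_1 p_1 \cdots q_n p_n$ together with the requirement that the endpoints of each $p_j$ lie in $X_j$, we have $q_i^- = p_{i-1}^+ \in X_{i-1}$ and $q_i^+ = p_i^- \in X_i$. Since any point of a closed set $Z$ is its own unique closest point in $Z$, the projection sets satisfy $q_i^- \in \pi_{X_{i-1}}(q_i)$ and $q_i^+ \in \pi_{X_i}(q_i)$.

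Now assume for contradiction that $X_i = X_{i-1}$. Then both $q_i^-$ and $q_i^+$ belong to $\pi_{X_i}(q_i)$, so
\[
\diam{\pi_{X_i}(q_i)} \;\geq\; d(q_i^-, q_i^+) \;=\; \ell(q_i) \;>\; \tau,
\]
where the middle equality uses that $q_i$ is a geodesic. This contradicts the assumption $\diam{\pi_{X_i}(q_i)} \leq \tau$, so $X_i \neq X_{i-1}$.

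There is no serious obstacle here: the statement is essentially a direct unpacking of the projection sets using the fact that the endpoints of $q_i$ lie in the associated contracting subsets. The only point worth double-checking is that the convention in Definition \ref{AdmDef} genuinely forces $q_i^- \in X_{i-1}$ and $q_i^+ \in X_i$ (as opposed to just lying near them), which follows from the concatenation $p_{i-1} q_i p_i$ together with the requirement that both endpoints of each $p_j$ lie in $X_j$.
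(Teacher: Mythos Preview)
Your proof is correct and follows essentially the same route as the paper's own argument: assume $X_i = X_{i-1}$, note that both endpoints of the geodesic $q_i$ lie in $X_i$ and hence in $\pi_{X_i}(q_i)$, and conclude $\diam{\pi_{X_i}(q_i)} \ge \ell(q_i) > \tau$, contradicting the bounded-projection hypothesis. Your presentation is in fact slightly cleaner than the paper's, which invokes $p_i \subset X_i$ rather than just the endpoint condition actually stated in Definition~\ref{AdmDef}.
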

    \begin{proof}
        Suppose to the contrary that $X_i = X_{i-1}$. Let $a = (q_i)^-$ and $b = (q_i)^+$, so that $a = (p_{i-1})^+$, $b = (p_i)^-$. Since $p_i\subseteq X_i$ and $p_{i-1}\subseteq X_{i-1}$, we have $a, b\in X_i$. But $d(a, b) = \proj_{X_i}(a, b) \le \diam{\pi_{X_i}(q_i)} \le \tau$, contradicting $\ell(q_i) > \tau$.
    \end{proof}

A sequence of points $x_i$ on a path $p$   is called \textit{linearly ordered} if $x_{i+1}\in [x_i, p^+]_p$ for each $i$.

\begin{defn}[Fellow travel]\label{Fellow}
Let   $\gamma = p_0 q_1 p_1 \cdots q_n p_n$ be an $(L, \tau)-$admissible
path. We say $\gamma$ has \textit{$r$-fellow travel} property for some $r>0$   if for any geodesic  
$\alpha$  with the same endpoints as $\gamma$,   there exists a sequence of linearly ordered points $z_i, w_i$ ($0 \le i \le n$) on $\alpha$ such that  
$$d(z_i, p_{i}^-) \le r,\quad d(w_i, p_{i}^+) \le r.$$
In particular, the intersection $N_r(X_i)\cap \alpha$ has diameter at least $\ge L-2r$ for each $X_i\in \f$ associated with $\gamma$. 
\end{defn}
The following result ensures that a locally long admissible path enjoys the fellow travel property.

\begin{prop}\label{admisProp}\cite[Prop 3.1]{YANG6}
For any $\tau>0$, there exist $L,  r, c>0$ depending only on $\tau$ and $C$ such that every $(L, \tau)$-admissible path   has $r$-fellow travel property. Consequently, it is a $c$-quasi-geodesic.
\end{prop}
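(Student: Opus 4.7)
My plan is to combine the (BP) condition with Lemmas \ref{BigThree} and \ref{BigFive}, in three steps: first upgrade the local projection bound to a global one, then use it to force $\alpha$ close to each $X_j$, and finally extract fellow travel together with the quasi-geodesic estimate.

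\textbf{Step 1 (telescoped projection).} For each $X_j$ appearing in $\gamma$, I claim $\pi_{X_j}(\gamma^-)$ lies within a bounded distance $R=R(C,\tau)$ of $p_j^-$, and symmetrically $\pi_{X_j}(\gamma^+)$ of $p_j^+$. I prove this by finite induction outward from $X_j$: the (BP) condition gives $\diam{\pi_{X_j}(q_j)}\le \tau$, anchoring $p_{j-1}^+$ near $p_j^-$. Provided $L$ is sufficiently large in terms of $C,\tau$, Lemma \ref{BigThree} together with Lemma \ref{LL2ImplyBP} shows $p_{j-1}\subset X_{j-1}$ cannot enter $N_C(X_j)$, so Lemma \ref{BigThree}(3) gives $\diam{\pi_{X_j}(p_{j-1})}\le C$, transporting the bound from $p_{j-1}^+$ to $p_{j-1}^-$; one more application of (BP) to $q_{j-1}$ passes the bound to $p_{j-2}^+$. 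Iterating telescopes the projection back to $\gamma^-=q_0$.

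\textbf{Step 2 (fellow travel).} Given Step 1, $\diam{\pi_{X_j}(\alpha^-)\cup\pi_{X_j}(\alpha^+)}\ge \ell(p_j)-2R\ge L-2R$. Choosing $L>2R+C$, the contrapositive of Lemma \ref{BigThree}(1) forces $d(\alpha,X_j)<C$, and Lemma \ref{BigThree}(2) then places points of $\alpha$ within $C$ of $\pi_{X_j}(\alpha^\pm)$, hence within $r:=C+R$ of $p_j^\pm$. These are the desired fellow travel points $z_j,w_j$. The linear order of $z_j,w_j$ along $\alpha$ follows from Lemma \ref{BigFive}(1): the sets $\alpha\cap N_C(X_j)$ are essentially intervals whose diameters are comparable to $\ell(p_j)$, while Step 1 shows that the projections from distinct $X_j$'s are far apart, so these intervals are disjoint and inherit the order of $\gamma$. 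For the quasi-geodesic constant, $r$-fellow travel yields $\ell(\alpha)\ge \sum_i\ell(p_i)+\sum_i\ell(q_i)-O(nr)$, and $n\le \ell(\alpha)/(L-2r)+O(1)$ because each $\ell(p_i)\ge L$; this gives $\ell(\gamma)\le c\cdot d(\gamma^-,\gamma^+)+c$ for some $c=c(\tau,C)$.

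The main obstacle is the apparent circularity in Step 1: to bound $\pi_{X_j}(p_{j-1})$ via Lemma \ref{BigThree}(3), I need $p_{j-1}$ to stay outside $N_C(X_j)$, but certifying this normally requires the very projection bound being proved. The resolution is that the induction runs strictly outward from $X_j$ itself, and each previously established telescoped bound at step $i$ certifies that the next piece $p_{i-1}$ is far from $X_j$ (its projection to $X_j$ is already within $R$ of $p_j^-$, while $\ell(p_{i-1})\ge L\gg R$), enabling the next application of Lemma \ref{BigThree}(3). Calibrating $L$ against the accumulated constant $R$ is the only quantitative subtlety.
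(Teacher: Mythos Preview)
The paper does not prove this proposition; it is quoted from \cite{YANG6} without argument. So your sketch must be judged on its own merits, and Step~1 has two concrete gaps.

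First, the line ``one more application of (BP) to $q_{j-1}$ passes the bound to $p_{j-2}^+$'' is wrong: the (BP) condition controls $\pi_{X_{j-1}}(q_{j-1})$ and $\pi_{X_{j-2}}(q_{j-1})$, not $\pi_{X_j}(q_{j-1})$. Once you have used (BP) on $q_j$, the hypothesis gives you nothing further about projections to $X_j$; every subsequent segment must be handled by some other mechanism.

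Second, your resolution of the circularity does not work. Knowing that $\pi_{X_j}(p_{i-1}^+)$ lies within $R$ of $p_j^-$ and that $\ell(p_{i-1})\ge L$ says nothing whatsoever about $d(p_{i-1},X_j)$; the segment $p_{i-1}$ could perfectly well sit inside $N_C(X_j)$. Indeed the (BP) condition does not exclude $X_{i-1}=X_j$ when $|i-1-j|\ge 2$, in which case $p_{i-1}\subset X_j$ and $\diam{\pi_{X_j}(p_{i-1})}\ge L$. Lemma~\ref{LL2ImplyBP} only yields $X_i\ne X_{i-1}$, not separation from a remote $X_j$.

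The missing ingredient is the \emph{bounded projection} property of the system $\mathscr F$: distinct elements of $\mathscr F$ have uniformly bounded projection onto one another (a consequence of $F$ consisting of independent contracting elements). Even with this in hand, a naive piece-by-piece telescoping would only give a bound linear in $n$; the actual argument in \cite{YANG6} uses a more careful induction or entry/exit analysis to obtain the uniform bound recorded as Corollary~\ref{ProjMidSet}. Your Step~2 is essentially fine once Step~1 is repaired along these lines.
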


The next lemma provides a method for constructing admissible paths.
\begin{lem}[Extension Lemma]\label{extend3}

There exist constants  $L, r, B>0$ depending only on $C$ with the following property. 

Let $h_{1}, h_{2}, h_{3} \in G$ be $B$-independent contracting elements. 
For each $1\le i\le 3$, choose an element $f_i\in \langle h_i\rangle$ such that $\|Fo\|_{\min}\ge L$, and let $F$ be the set of these elements. Then for any $g,h\in G$, there exists $f \in F$ such that the path  $$\gamma:=[o, go]\cdot(g[o, fo])\cdot(gf[o,ho])$$ is $(L, \tau)$-admissible with respect to the contracting system $\f$. 
\end{lem}

\begin{rem} 
Since admissibility is a local conditions, one can connect any number of elements  $g\in G$ using $F$ to satisfy (1) and (2), and to obtain a admissible path. We refer the reader to \cite{YANG10} for a precise formulation.
\end{rem}
The following Corollary will be useful in Section~\ref{SecGrowthTight}.
\begin{cor}[Corollary 3.9 in \cite{YANG6}]\label{ProjMidSet}
    Let $\gamma$ be an $(L, \tau)$-admissible path where $L$ and $\tau$ are given by Proposition~\ref{admisProp}, and let $X_k$ be one of its associated contracting subset $(0\le k\le n)$. Then there exists $N = N(C, \tau) > 0$ such that $$\proj_{X_k}(\gamma_{-}, (p_k)_{-}) < N, \quad \proj_{X_k}(\gamma_+, (p_k)_+)<N$$ where $\gamma_{-}$,$\gamma_+$ and $(p_k)_{-}$, $(p_k)_+$ denote the starting and ending points of $\gamma$ and $p_k$, respectively.
\end{cor}

\subsection{Statistically convex-cocompact actions}\label{SSecSCC}
We now introduce the class of statistically convex-cocompact actions, introduced in \cite{YANG10} as a generalization of convex-cocompact actions. This notion independently appears under the name \textit{strongly positively recurrent} manifold in the dynamical context of  Schapira-Tapie \cite{ST21}.

Given constants $0\leq M_1\leq M_2$, let $\mathcal{O}_{M_1,M_2}$ denote the set of elements $g\in G$ for which there exists a geodesic $\gamma$ joining a point in $N_{M_2}(o)$ to a point in $N_{M_2}(go)$ whose interior lies outside $N_{M_1}(G o)$.

\begin{defn}[SCC Action]\label{SCCDefn}
If there exist positive constants $M_1,M_2>0$ such that $\e {\mathcal{O}_{M_1,M_2}}<\e G<\infty$, then the proper action of $G$ on $\U$ is called \textit{statistically convex-cocompact} (SCC).
\end{defn}
\begin{rem}
The definition of $\mathcal{O}_{M_1,M_2}$ is motivated by the action of the fundamental group of a finite-volume negatively curved Hadamard manifold on its universal cover. In that situation, for appropriate constants $M_1, M_2$, the set $\mathcal{O}_{M_1,M_2}$ coincides with  the union of the orbits of cusp subgroups up to finite Hausdorff distance. The condition $\e {\mathcal{O}_{M_1,M_2}}<\e G$ is known as the \textit{parabolic gap condition} in the work of Dal'bo, Otal and Peign\'{e} cite{DOP}. The growth rate $\e {\mathcal{O}_{M_1,M_2}}$ is called \textit{complementary growth exponent} in \cite{ACTao} and \textit{entropy at infinity} in \cite{ST21}. 
\end{rem}

SCC actions have purely exponential growth, and are therefore of divergence type.  
\begin{lem}\cite[Theorem B(3)]{YANG10}
Suppose that $G\act \U$ is a non-elementary SCC action with contracting elements. Then $G$ has purely exponential growth: for all $n\ge 1$, $$|B(n)|\asymp \mathrm{e}^{\e G n}.$$    
\end{lem}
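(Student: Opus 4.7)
The plan is to establish the matching bounds $|B(n)| \asymp \mathrm{e}^{n\omega_G}$ separately. The lower bound follows from a constructive argument using the Extension Lemma, while the upper bound crucially exploits the parabolic-style gap in the SCC hypothesis.

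For the \emph{lower bound}, I would fix three independent contracting elements $h_1, h_2, h_3 \in G$ (which exist by non-elementarity) and choose parameters $(L, \tau)$ as in Proposition~\ref{admisProp} so that $(L,\tau)$-admissible paths are quasi-geodesics with the $r$-fellow travel property. Since $\omega_G = \limsup_n n^{-1} \ln |B(n)|$, for any $\varepsilon > 0$ there exists $n_0$ with $|B(n_0)| \ge \mathrm{e}^{n_0(\omega_G - \varepsilon)}$. The Extension Lemma (Lemma~\ref{extend3}) allows us to concatenate $m$ elements $g_1, \ldots, g_m \in B(n_0)$ by choosing $f_i \in F$ between consecutive $g_i$'s so that the word $(g_1, f_1, g_2, \ldots, f_{m-1}, g_m)$ labels an $(L,\tau)$-admissible path. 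Distinct tuples yield distinct elements of $G$ by combining the $r$-fellow travel property with properness of the action. This gives $|B\big(m(n_0+L_0)\big)| \gtrsim |B(n_0)|^m / |F|^{m-1}$, and letting $m \to \infty$ and $\varepsilon \to 0$ produces $|B(n)| \gtrsim \mathrm{e}^{n\omega_G}$.

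For the \emph{upper bound}, write $B(n) = \mathcal{A}_n \sqcup \mathcal{B}_n$, where $\mathcal{A}_n := B(n) \cap \mathcal{O}_{M_1, M_2}$ and $\mathcal{B}_n := B(n) \setminus \mathcal{O}_{M_1, M_2}$. The SCC assumption directly yields $|\mathcal{A}_n| \lesssim \mathrm{e}^{n\omega_{\mathcal{O}_{M_1,M_2}}}$ with $\omega_{\mathcal{O}_{M_1,M_2}} < \omega_G$, so this piece is exponentially negligible. For $g \in \mathcal{B}_n$, some geodesic $[o, go]$ meets $N_{M_1}(Go)$ with gaps of length at most $M_2$, producing an ordered sequence of orbit points $o = g_0 o, g_1 o, \ldots, g_k o = go$ with each $d(g_{i-1} o, g_i o) \le D := 2M_1 + M_2 + 1$. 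This yields a decomposition $g = (g_0^{-1} g_1)(g_1^{-1} g_2) \cdots (g_{k-1}^{-1} g_k)$ into factors in $B(D)$. Passing to the Poincaré series, the resulting bound
\[
\sum_{g \in \mathcal{B}_n} \mathrm{e}^{-s \|g\|} \;\le\; \sum_{k \ge 1} \Big(\sum_{h \in B(D)} \mathrm{e}^{-s \|h\|}\Big)^{k}
\]
(up to counting multiplicities from the bounded ambiguity in choosing the orbit points) is a geometric series, convergent for $s > \omega_G$, from which a Tauberian-type argument yields $|\mathcal{B}_n| \lesssim \mathrm{e}^{n\omega_G}$.

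The \emph{main obstacle} is the upper bound: the naive decomposition introduces a polynomial factor $k \asymp n$ in counting decompositions, which must be absorbed into a geometric series whose ratio is kept bounded as $s \searrow \omega_G$. The SCC condition is exactly what prevents $\mathcal{O}_{M_1, M_2}$ from inflating the series, and the bounded-pieces decomposition together with the convergence/divergence behavior of the Poincaré series at $\omega_G$ is the technical heart of the argument. Without SCC, as noted after Theorem~\ref{MainThm}, examples of geometrically finite Kleinian groups lacking the parabolic gap condition show that purely exponential growth can fail.
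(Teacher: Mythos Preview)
The paper does not supply its own proof of this lemma; it is quoted from \cite{YANG10} without argument, so there is no in-paper proof to compare against. I will therefore assess your proposal directly.

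Your lower bound sketch is the standard route and is essentially correct: the Extension Lemma yields a supermultiplicative estimate of the form $|B(n+m+C)| \ge c\,|B(n)|\,|B(m)|$ (distinct tuples giving distinct elements via fellow travel and properness), and a Fekete-type argument then upgrades the $\limsup$ to an honest lower bound $|B(n)| \gtrsim e^{n\omega_G}$.

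The upper bound, however, has a genuine gap in two places. First, the decomposition you describe for $g\in\mathcal{B}_n$ does not follow from the definition: $g\notin\mathcal{O}_{M_1,M_2}$ says only that every geodesic from $N_{M_2}(o)$ to $N_{M_2}(go)$ meets $N_{M_1}(Go)$ at \emph{some} interior point, not that it stays uniformly close to $Go$ with gaps of length at most $M_2$. So you cannot conclude that $[o,go]$ is partitioned into pieces of length $\le D$. Second, even granting such a decomposition, your geometric series $\sum_k\big(\sum_{h\in B(D)}e^{-s\|h\|}\big)^k$ converges only when the inner sum is strictly less than $1$; the threshold $s_0$ where this happens satisfies $s_0\ge\omega_G$ (since the full Poincar\'e series dominates the finite inner sum), and there is no reason to expect equality. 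You would only obtain $|B(n)|\lesssim e^{ns_0}$ for some $s_0\ge\omega_G$, which is vacuous. Your assertion that the series is ``convergent for $s>\omega_G$'' is precisely the unjustified step.

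The argument in \cite{YANG10} instead decomposes $[o,go]$ into maximal segments \emph{outside} $N_M(Go)$, which correspond to elements of $\mathcal{O}_{M_1,M_2}$, interleaved with segments \emph{inside} $N_M(Go)$; the convolution estimate then uses the SCC gap $\omega_{\mathcal{O}_{M_1,M_2}}<\omega_G$ throughout to control the full series, rather than merely discarding $\mathcal{A}_n$ at the outset.
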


\begin{defn}\label{barriers}
Fix $\epsilon>0$ and a subset $P\subseteq G$.  A geodesic $\gamma$ is said to contain an \textit{$(\epsilon, f)$-barrier} for some $f\in P$   if there exists    an element $g \in G$ such that 
\begin{equation}\label{barrierEQ}
\max\{d(g\cdot o, \gamma), \; d(g\cdot fo, \gamma)\}\le \epsilon.
\end{equation}
If $\gamma$ contains no {$(\epsilon, f)$-barrier} for any $f\in P$, then it is called \textit{$(\epsilon, P)$-barrier-free}.
\end{defn}
The terminology ``barrier" reflects the obstruction these segments present to the geometry of the geodesic. Inequality \eqref{barrierEQ} implies that a translate of the axis of $f$ fellow-travels with $\gamma$, indicating the presence of a 'long' axis. Since our goal is to enumerate elements characterized by short axes, the appearance of such a fellow-traveling segment effectively acts as an obstruction.

\begin{defn}\label{FractionalBarrierFree}
Fix $\theta\in (0,1]$, $\epsilon, M, L>0$ and a subset $P\subseteq G$. An element $g\in G$ is said to be $(\theta, L)$\emph{-fractional }$(\epsilon, P)$\emph{-barrier-free} if there exists a collection $\mathbb K$ of disjoint open subintervals  $\alpha$ of the geodesic $[o, go]$ satisfying:
\begin{itemize}
    \item each $\alpha\in \mathbb K$ is $(\epsilon, P)$-barrier-free, has length at least $L$, and its endpoints $\partial \alpha$ are contained in $N_M(Go)$;
    \item the total length of these intervals satisfies
    $$
    \sum_{\alpha\in \mathbb K} \len(\alpha)\ge \theta \; d(o, go).
    $$
\end{itemize}

Denote by  $\mathcal V_{\epsilon, M, P}({\theta, L})$  the set of elements $g\in G$ possessing the   $(\theta, L)$-fractional $(\epsilon, P)$-barrier-free property.
\end{defn}

\begin{thm}\label{FGrowthTightThm}\cite[Theorem 5.2]{GYANG}
There exists constants $\epsilon, M>0$ such that the following holds:
For any $0<\theta\le 1$ and  any nontrivial  subset $P\subseteq G$, there exists $L=L(\theta, P)>0$ such that $\mathcal V_{\epsilon, M, P}({\theta, L})$  is a growth tight set.
\end{thm}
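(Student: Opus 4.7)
The plan is to prove growth tightness of $\mathcal V_{\epsilon,M,P}(\theta,L)$ via an injection argument built on the Extension Lemma \ref{extend3}. I would first fix a finite set $F=\{f_1,\dots,f_N\}\subset G$ of independent contracting elements (available since $G$ is non-elementary and contains contracting elements), chosen with sufficiently high powers so that the Extension Lemma applies with uniform constants $L_0,r,\tau$ and fixed insertion norm $D:=\max_{f\in F}\|fo\|$. The constants $\epsilon$ and $M$ are then defined from $r$, $\tau$ and the contracting constant $C$ of Lemma \ref{BigThree}; crucially, $\epsilon,M,N,D$ all depend only on $F$ and the geometry of $X$, not on $P$, which is what will give uniformity in $P$.

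For each $g\in\mathcal V_{\epsilon,M,P}(\theta,L)\cap B(n)$, the definition supplies disjoint $(\epsilon,P)$-barrier-free subintervals $\alpha_1,\dots,\alpha_k$ of $[o,go]$, each of length $\ge L$, with $\partial\alpha_i\subset N_M(Go)$ and $\sum_i\ell(\alpha_i)\ge\theta n$; in particular $k\ge\theta n/L_{\max}$, where $L_{\max}$ is the maximum length of the $\alpha_i$, so in the regime $L\le L_{\max}\le 2L$ one has $k\gtrsim\theta n/L$. Snapping each $\alpha_i^\pm$ to a nearby orbit point yields a product decomposition $g=u_0\,a_1\,u_1\cdots a_k\,u_k$, where $\|a_i o\|\approx\ell(\alpha_i)\ge L$ and the $u_j$'s are uniformly short. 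The $(\epsilon,P)$-barrier-freeness of each $\alpha_i$ is what ensures that inserting a contracting element from $F$ into the middle of $a_i$ does not collide with any pre-existing $P$-barrier structure, so the insertion genuinely modifies the element.

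Apply the Extension Lemma \ref{extend3} inside each $a_i$ to choose an $f_{j_i}\in F$ whose insertion produces an $(L_0,\tau)$-admissible path. Define the map $\Phi_g:\{1,\dots,N\}^k\to G$ by
\[
\Phi_g(j_1,\dots,j_k)=u_0\cdot a_1'\cdot u_1\cdots a_k'\cdot u_k,
\]
where $a_i'$ is $a_i$ with $f_{j_i}$ inserted. By the concatenation rule for admissible paths (Remark \ref{ConcatenationAdmPath}) and Proposition \ref{admisProp}, the labeled path is a $c$-quasi-geodesic enjoying the $r$-fellow-travel property, so $\|\Phi_g(\bar j)\,o\|\le n+Dk$. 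The fellow-travel property together with the projection estimates of Lemma \ref{BigFive} lets one detect each inserted axis $\ax(f_{j_i})\in\mathscr F$ on any geodesic $[o,\Phi_g(\bar j)o]$; combined with Corollary \ref{ProjMidSet}, this recovers both the decomposition $g=u_0 a_1\cdots u_k$ and the multi-index $\bar j$ from $\Phi_g(\bar j)$ up to bounded multiplicity, giving essential injectivity.

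Combining the injection with purely exponential growth $|B(n)|\asymp e^{n\omega_G}$ of the SCC action (\cite{YANG10}) yields
\[
|\mathcal V_{\epsilon,M,P}(\theta,L)\cap B(n)|\cdot N^k\;\le\;C_0\cdot e^{(n+Dk)\omega_G},
\]
and after taking logarithms, dividing by $n$, and using $k\ge\theta n/L$,
\[
\omega(\mathcal V_{\epsilon,M,P}(\theta,L))\;\le\;\omega_G+\tfrac{\theta}{L}\bigl(D\omega_G-\log N\bigr).
\]
The main obstacle is arranging the strict inequality $\log N>D\omega_G$: a naive binary insertion ($N=2$) falls short unless $D\omega_G<\log 2$, which typically fails. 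The resolution, which is where the detailed construction in \cite{GYANG} (Theorem 5.2) does the heavy lifting, is to exploit the abundance of independent contracting elements granted by the SCC hypothesis in order to enlarge the alphabet $F$ at each slot while keeping the incremental norm $D$ under control, producing genuine entropy gain per insertion. Once such $F$ is fixed and $L=L(\theta)$ is chosen large enough to absorb all additive constants, the displayed inequality gives a uniform gap $\omega(\mathcal V_{\epsilon,M,P}(\theta,L))<\omega_G$ valid for every $P\subset G$, as required.
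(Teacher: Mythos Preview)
This theorem is imported from \cite[Theorem 5.2]{GYANG} and the present paper supplies no proof, so there is no in-paper argument to compare against; your sketch has to stand on its own, and it does not close at the point you yourself flag.

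First, the map $\Phi_g:\{1,\dots,N\}^k\to G$ is not available as written: Lemma~\ref{extend3} guarantees that \emph{some} $f\in F$ makes a given concatenation admissible, not that all $N$ choices do, so the honest alphabet at each slot is $\{0,1\}$ (insert the one $f$ that works, or don't), and the fellow-travel/injectivity argument only applies to those admissible choices. Second, even granting $N$ free choices per slot, your proposed remedy of enlarging $F$ while holding $D=\max_{f\in F}\|fo\|$ bounded cannot produce $\log N>D\omega_G$: the $N$ distinct elements of $F$ all lie in the ball $B(D)$, so $N\le |B(D)|\asymp\mathrm{e}^{D\omega_G}$ by purely exponential growth, i.e.\ $\log N\le D\omega_G+O(1)$ automatically. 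The ball-counting route is a dead end, and deferring the ``heavy lifting'' back to \cite{GYANG} is not a proof.

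What actually works is to replace the ball count by a Poincar\'e series comparison, exactly the mechanism of Lemma~\ref{CritGapLem} in this paper. With binary insertions at $k\ge c\theta\|g\|$ slots and disjoint images one obtains
\[
\mathcal P(G,s)\;\ge\;\sum_{g\in\mathcal V}\mathrm{e}^{-s\|g\|}\bigl(1+\mathrm{e}^{-sD}\bigr)^{c\theta\|g\|}\;=\;\mathcal P\bigl(\mathcal V,\rho(s)\bigr),\qquad \rho(s)=s-c\theta\log\bigl(1+\mathrm{e}^{-sD}\bigr)<s,
\]
and continuity of $\rho$ then yields the strict gap $\omega_{\mathcal V}<\omega_G$ with no constraint whatsoever on $D$ relative to $\omega_G$. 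This is the step your outline is missing.
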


Let $\epsilon, M>0$ be the constants provided by Theorem \ref{FGrowthTightThm}. For $g\in G$, let $\mathbb{K}$ be the collection of  connected components $\alpha$ of length at least $L$ in $[o, go]\setminus N_M(Go)$. Denote by $\mathcal{O}_M({\theta, L})$ the set of $g\in G$ such that $$\sum_{\alpha\in \mathbb{K}} \ell(\alpha) \ge \theta \; d(o, go).$$
By \cite[Lemma 6.1]{YANG10}, each component $\alpha$ are $(\epsilon, P)$-barrier-free . The following corollary will play an important role  in what follows.
\begin{cor}\label{OGrowthTight}\cite[Corollary 5.3]{GYANG}
    For any $\theta\in (0, 1]$ there exists $L = L(\theta)$ such that $\mathcal{O}_M({\theta, L})$ is a growth tight set.
\end{cor}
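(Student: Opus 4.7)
The plan is to reduce directly to Theorem~\ref{FGrowthTightThm} by proving the inclusion $\mathcal{O}_M(\theta, L) \subseteq \mathcal{V}_{\epsilon, M, P}(\theta, L)$ for the constants $\epsilon, M$ from that theorem and for any fixed nonempty $P \subset G$. Since growth rates are monotone under inclusion, growth tightness of the larger set then transfers to the smaller one.

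I would first fix $\epsilon, M > 0$ as produced by Theorem~\ref{FGrowthTightThm}, arranging without loss of generality that $\epsilon < M$. If the $M$ supplied by the theorem does not already satisfy this, I would replace it by a larger constant, which only shrinks $\mathcal{V}_{\epsilon, M, P}(\theta, L)$ and so preserves growth tightness. Then fix any nonempty $P \subset G$ (for instance a singleton containing a contracting element), and for $\theta \in (0,1]$ take $L = L(\theta)$ as supplied by the theorem.

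The key step is to check that, for every $g \in \mathcal{O}_M(\theta, L)$, the family $\mathbb{K}$ of open components $\alpha$ of $[o, go] \setminus N_M(Go)$ of length at least $L$ already witnesses the membership $g \in \mathcal{V}_{\epsilon, M, P}(\theta, L)$. The total-length inequality is exactly the defining condition of $\mathcal{O}_M(\theta, L)$. The endpoint condition $\partial \alpha \subset N_M(Go)$ holds because $o, go \in Go \subseteq N_M(Go)$ force the endpoints of each such component to sit on $\partial N_M(Go)$. The $(\epsilon, P)$-barrier-free property is then a short distance estimate: if some $g' \in G$ and $f \in P$ produced an $(\epsilon, f)$-barrier along $\alpha$, then both $g'o$ and $g'fo$ would lie within $\epsilon$ of $\alpha$; but these two points belong to $Go$, while every point of the open interval $\alpha$ has distance $>M>\epsilon$ from $Go$, a contradiction. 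This is the content of \cite[Lemma 6.1]{YANG10} alluded to in the excerpt.

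With the inclusion established, Theorem~\ref{FGrowthTightThm} gives $\omega_{\mathcal{O}_M(\theta, L)} \le \omega_{\mathcal{V}_{\epsilon, M, P}(\theta, L)} < \omega_G$, as required. No serious obstacle is expected; the only delicate point is the bookkeeping around the compatibility of the constants $\epsilon$ and $M$ between the two definitions, which is resolved by the monotonicity observation above.
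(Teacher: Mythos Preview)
Your approach is precisely the one the paper indicates: the text preceding the corollary says the components $\alpha$ of $[o,go]\setminus N_M(Go)$ are barrier-free by \cite[Lemma~6.1]{YANG10}, which is exactly your distance argument, and then the inclusion $\mathcal{O}_M(\theta,L)\subseteq\mathcal{V}_{\epsilon,M,P}(\theta,L)$ feeds into Theorem~\ref{FGrowthTightThm}.

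One correction, however: your monotonicity claim is in the wrong direction. Enlarging $M$ \emph{weakens} the endpoint condition $\partial\alpha\subset N_M(Go)$, so $\mathcal{V}_{\epsilon,M,P}(\theta,L)$ \emph{grows} with $M$, and growth tightness for the theorem's original $M$ does not automatically transfer to a larger one. This is not a fatal gap in the strategy, since in the source \cite{GYANG} the constants $\epsilon,M$ are produced together with $\epsilon$ governed by the contracting data and $M$ by the SCC constant, and one may take them compatible from the outset; but the fallback justification you wrote (``replace $M$ by a larger constant, which only shrinks $\mathcal{V}$'') is false as stated and should be removed or replaced by a direct appeal to the choice of constants in the proof of Theorem~\ref{FGrowthTightThm}.
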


    By definition of $\mathcal{O}_M(\theta, L)$, if $L_1 > L_2$ and $M_1 > M_2$, then any connected component $\alpha$ of length at least $L_1$ also has length at least $L_2$, and $N_{M_2}(Go)\subseteq N_{M_1}(Go)$. Consequently, $\mathcal{O}_{M_1}({\theta, L_1}) \subseteq \mathcal{O}_{M_2}({\theta, L_2})$.

    Before proceeding, let us introduce some additional terminology.
    A \textit{product decomposition} of $g\in G$ means a sequence $(s_1, \cdots, s_m)$ where $s_i \in G$ and $s_i\neq 1$ for $1\le i\le m$ such that $g =  s_1 \cdots s_m$. Set $g_0=1$ and $g_i=s_1\cdots s_i$ for $1\le i\le m$.

    \begin{defn}
    Fix a point $o\in X$.  A product decomposition $g = s_1 \cdots s_m$ is called \textit{$M$-almost geodesic} if for all $0\le i < j < k \le m$, 
    $$d(g_i o, g_jo) + d(g_jo, g_k o) \le d(g_io, g_k o) + M.$$
    Equivalently, the sequence $\{g_io:0\le i\le m\}$ forms a $(1,M)$-quasi-geodesic.
    \end{defn}

    We now introduce a class of elements admitting a specific type of almost geodesic decomposition. 
    Let $\gamma = [a, b]$ be a geodesic. Recall that a set of points $\{x_i\in \gamma : 0\le i\le m+1\}$ is   linearly ordered on $\gamma$ if   $d(a, x_i)\le d(a, x_{i+1})$ for each $0\le i \le m$.

    \begin{defn}\label{QconvexProperty}
    
     Let $\theta\in (0,1]$ and $L, M>0$. An element $g\in G$ is said to be \emph{$(\theta, L, M)$-linearly recurrent} if there exists a geodesic  $\gamma=[o,go]$ and a linearly ordered set of distinct points  $$\{x_i: 0\le i\le m\}$$ on $\gamma$ with $x_0=o$, $x_m=go$ and $m:=\lfloor \theta d(o,go)\rfloor$, together with a sequence of elements in $G$ $$\{g_i: 1\le i\le m\}$$ where $g_0 = 1$ and $g_{m} = g$, such that  $$d(g_i o, x_i)\le M \quad \text{for all}\; 1\le i\le m$$ and  $$d(g_i o, g_j o)\ge L  \quad \text{for all}\;  0\le i \ne j\le m.$$   

     Setting $s_i = g_{i-1}^{-1} g_{i}$ for $1\le i\le m$ yields a product decomposition $g = s_1 \cdots s_m$.

    \end{defn}
    \begin{lem}\label{lem:QuasiConvex->AlmostGeodesic}
        If $g\in G$ is  $(\theta, L, M)$-linearly recurrent, then the product decomposition $g = s_1 \cdots s_m$ is $6M$-almost geodesic.
    \end{lem}
    \begin{proof}
        Let $g_i\in G$ and $x_i\in [o,go]$ be as in Definition~\ref{QconvexProperty}.
        Fix any $0\le i<j<k\le m$. By the triangle inequality, 
        \[d(g_i o, g_j o)
        \le d(g_i o, x_i) + d(x_i,x_j) + d(x_j,g_jo)
        \le d(x_i,x_j) + 2M.\]
        and similarly, 
        \[d(g_j o, g_k o)
        \le d(g_j o, x_j) + d(x_j,x_k) + d(x_k,g_ko)
        \le d(x_j,x_k) + 2M.\]
        Since $x_i,x_j,x_k$ are linearly ordered on the geodesic $[o, go]$, we have $$d(x_i,x_j) + d(x_j,x_k) = d(x_i,x_k).$$
        Applying the triangle inequality again, 
        \[
        d(x_i,x_k)
        \le d(x_i,g_io) + d(g_io,g_ko) + d(g_ko,x_k)
        \le d(g_io,g_ko) + 2M.
        \]
        Combine the inequalities yields
        \[\begin{aligned}
        d(g_io,g_jo)+d(g_jo,g_ko)
        &\le d(x_i,x_j)+d(x_j,x_k)+4M\\
        &=d(x_i,x_k)+4M\\
        &\le d(g_io,g_ko) + 6M.
        \end{aligned}\]
        Since $i,j,k$ are arbitrary, the product decomposition is $6M$-almost geodesic.
    \end{proof}
    
    Let $\mathcal{D}_M(\theta, L)$ denote the set of $(\theta, L, M)$-linearly recurrent elements in $G$.
    Observe that if $\theta_1 \ge \theta_2$, $L_1 \ge L_2$, and $M_1 \le M_2$, then $\mathcal{D}_{M_1}(\theta_1, L_1) \subseteq \mathcal{D}_{M_2}(\theta_2, L_2)$.
    \begin{lem}\label{OutGrowTight}
    Assume that the action $G\act X$ is SCC. Then there exists $M > 0$ with the following property: for any $L>0$, there exists $\theta>0$ such that the set $G\setminus \mathcal{D}_M(\theta, L)$ is growth tight.    
    \end{lem}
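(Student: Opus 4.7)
The plan is to contain $A \setminus A_{\theta, L, M}$ inside a set of the form $\mathcal{O}_M(\theta_0, L_0)$ that is growth tight by Corollary \ref{OGrowthTight}, and then conclude since subsets of growth-tight sets are growth tight. First fix $M$ to be the constant from the SCC hypothesis (the one used in Corollary \ref{OGrowthTight}). For a convenient $\theta_0 \in (0, 1)$---say $\theta_0 = 1/2$---Corollary \ref{OGrowthTight} gives $L_1 = L_1(\theta_0)$ such that $\mathcal{O}_M(\theta_0, L_1)$ is growth tight. Given the value $L$ in the lemma, set $L_0 := \max\{L_1, L + 2M\}$; by monotonicity in the length parameter, $\mathcal{O}_M(\theta_0, L_0) \subseteq \mathcal{O}_M(\theta_0, L_1)$ is also growth tight. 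Finally, let $\theta := (1-\theta_0)/(4 L_0)$.

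The heart of the argument is the claim that any $g \in G$ of sufficiently large norm $n = d(o, go)$ with $g \notin \mathcal{O}_M(\theta_0, L_0)$ is $(\theta, L, M)$-admissible. Parametrise $\gamma = [o, go]$ by $[0, n]$ and partition it into $J = \lceil n/L_0\rceil$ consecutive intervals $I_1, \ldots, I_J$ of length $L_0$. Call $I_j$ \emph{bad} if $I_j \subseteq \gamma \setminus N_M(Go)$ and \emph{good} otherwise. A bad interval lies in some connected component of $\gamma \setminus N_M(Go)$ of length at least $L_0$, so the assumption $g \notin \mathcal{O}_M(\theta_0, L_0)$ forces the total length of these long components to be strictly below $\theta_0 n$; dividing by $L_0$, there are strictly fewer than $\theta_0 n / L_0$ bad intervals and hence at least $(1 - \theta_0) n / L_0$ good ones.

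From the good intervals with indices in $[2, J - 2]$ (discarding the first and last few so that the separations $d(o, g_k o), d(g_k o, go) \ge L$ become automatic), select every other one. This produces indices $j_1 < j_2 < \cdots < j_p$ with $j_{k+1} \ge j_k + 2$ and $p \ge (1-\theta_0) n / (2 L_0) - O(1)$. For each $j_k$ pick $x_k \in I_{j_k} \cap N_M(Go)$ and $g_k \in G$ with $d(g_k o, x_k) \le M$. The $x_k$ are linearly ordered on $\gamma$ with consecutive spacing
\[
x_{k+1} - x_k \ge (j_{k+1} - j_k - 1) L_0 \ge L_0 \ge L + 2M,
\]
so the triangle inequality yields $d(g_k o, g_{k'} o) \ge L$ for every pair $k < k'$, and the boundary exclusion gives the separations involving $o$ and $go$. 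For $n$ sufficiently large (depending only on $L, M, \theta_0$), $p \ge \lfloor \theta n \rfloor$, so $g$ is $(\theta, L, M)$-admissible.

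Elements of bounded norm form a finite subset by properness of the action and do not affect the growth rate. Thus, modulo a finite set, $A \setminus A_{\theta, L, M} \subseteq \mathcal{O}_M(\theta_0, L_0)$, which is growth tight. The main obstacle to anticipate is that a naive greedy argument picking \emph{every} good interval would allow consecutive $x$-values to come arbitrarily close across an interval boundary, breaking the $L$-separation of the orbit representatives. Skipping every other good interval resolves this while keeping a linear proportion of the data, and this combinatorial pigeonhole together with the monotonicity of $\mathcal{O}_M$ in its length parameter is the essence of the proof.
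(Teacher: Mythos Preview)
Your proof is correct and follows essentially the same approach as the paper: reduce to Corollary \ref{OGrowthTight} by showing that any $g$ of large norm outside $\mathcal{O}_M(\theta_0,L_0)$ is $(\theta,L,M)$-admissible, then absorb the finitely many short elements. The only cosmetic difference is bookkeeping---the paper samples $2M$-spaced points on $[o,go]$ and runs a dichotomy on how many lie in $N_M(Go)$, whereas you partition into $L_0$-intervals and count good versus bad ones; your ``every other good interval'' trick plays the same role as the paper's ``every $t$-th point'' in guaranteeing the $L$-separation.
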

    \begin{proof}
        By Corollary \ref{OGrowthTight}, for any $0<\theta_0\le 1/8$, there exist constants $M_0, L_0>0$ such that $\mathcal{O}_{M_0}(\theta_0, L_0)$ is growth tight. For example, we take $\theta_0 = 1/8$. Let $$M = \max\{2 M_0, L_0\}, \quad \theta = {\theta_0}/{(2M + L)}$$

        Our goal is to prove that the union  $\mathcal{O}_{M_0}({\theta_0, L_0}) \cup \mathcal{D}_M(\theta, L)$ covers the whole group $G$ up to finitely many exceptions. This clearly concludes the proof since $\mathcal{O}_{M_0}(\theta_0, L_0)$ is growth tight.

        Let $N = 16(4M + L)$. Let   $g\in G$ with $\|g\| = d(o, go)>N$.  We shall prove that $g$ lies in  $\mathcal{O}_{M_0}({\theta_0, L_0}) \cup \mathcal{D}_M(\theta, L)$ via the two claims given below. 
        
        First, let us explain the related construction by fixing a geodesic $\gamma = [o, go]$. Choose a linearly ordered set of points $\{x_i: 1\le i\le k\}$ on $\gamma$ with $$k = \left\lfloor \frac{\|g\|}{2M}\right\rfloor - 1$$ such that $d(x_i, x_{j})\ge 2M$ for $0\le i < j \le k+1$, where we set $x_0 = o$, $x_{k+1} = go$. 
        Since $\|g\| = d(o, g o) > N$, we have $$Mk > \frac{\|g\|}{2} - 2M > \frac{\|g\|}{4}\ge 2\theta_0 \|g\|,$$ and therefore \begin{equation}\label{equ:longTotalLength}
            Mk/2 > \theta_0 \|g\|.
        \end{equation} This inequality will allow us to apply Corollary~\ref{OGrowthTight}. Meanwhile, since $\|g\|>N = 16(4M+L)$, $$\frac{Mk}{4M + L} -2 - \theta \|g\| > \|g\|\left(\frac{1}{4(4M + L)} - \theta\right) -2 \ge \frac{\|g\|}{8(4M + L)} -2 \ge 0$$ which yields by the choice of $\theta$,   \begin{equation}\label{equ:largeGap}
            \frac{Mk}{4M + L} -2 > \theta \|g\|.
        \end{equation}
        This inequality will be used to deduce the $(\theta, L, M)$-linear recurrence for $g$.
        
        Let $K$ denote the set of those points in $\{x_i: 1\le i\le k\}$ that are contained in $N_{M}(Go)$. For notational simplicity, we re-index $K = \{y_1, y_2, \cdots, y_m\}$ with $m=|K|$.   
        We are ready to complete the proof by the following two claims.
 
        \begin{claim}
        If $m \ge \frac{k}{2}$ then $g\in\mathcal{D}_M(\theta, L)$.     
        \end{claim} 
        \begin{proof}[Proof of the Claim 1]
            Since $K\subseteq \{x_1, \cdots, x_k\}$, we have $d(y_i, y_j)>2M$ for all $0\le i<j\le m+1$, where we set $y_0 = o$ and $y_{m+1} = go$. In particular, for any $0\le i<j\le m+1$, we have $$d(y_i, y_j) = \sum_{l=i}^{j-1} d(y_i, y_{i+1}) >2M(j-i).$$

            Define $$N = \left\lceil\frac{L+2M}{2M}\right\rceil, \quad n =\left \lfloor\frac{m}{N}\right\rfloor - 1.$$ By the properties of the ceiling function, we have $$\frac{L+2M}{2M}\le N<\frac{L+4M}{2M}.$$ Now consider the subsequence $z_i = y_{i\cdot N}$ for $0\le i\le n$ and set $z_{n+1} = go$. For any $0\le i<j\le n+1$, because the number of $y$'s between $z_i$ and $z_j$ is at least $N$, we ahve $$d(z_i, z_j)>2MN > L+2M.$$ For each $1\le i\le n$, $z_i\in K$. Therefore, by definition of $K$, there exists $g_i\in G$ such that $d(g_io, z_i)\le M$. Applying the triangle inequality, for any $0\le i<j\le n+1$ we obtain $$d(g_io, g_jo)\ge d(z_i, z_j) - d(g_io, z_i) - d(g_jo, z_j)\ge (L+2M) - M-M = L.$$ It remains to estimate $n$ from below. Using $N<\frac{L+4M}{2M}$ and the definition of $n$, we have 
            $$n\ge \frac{m}{N}-2
            > \frac{2Mm}{L+4M} - 2\ge \frac{Mk}{L+4M}-2.$$ Together with inequality~\eqref{equ:largeGap}, this implies $$n\ge \theta \|g\|.$$ Thus, the points $\{z_1, \cdots, z_n\}$ and group elements $\{g_1, \cdots, g_n\}$ satisfy the $(\theta, L, M)$-linear recurrence for $g$, i.e., $g\in \mathcal{D}_M(\theta, L)$.  
        \end{proof}
        \begin{claim}
        If $m < \frac{k}{2}$ then $g\in \mathcal{O}_{M/2}({\theta_0, M}) \subseteq \mathcal{O}_{M_0}({\theta_0, L_0})$  
        \end{claim}    
        \begin{proof}[Proof of the Claim 2]
        
            For each $i$ with $1\le i\le k$ and $i\not\in K$, we have $x_i \not\in N_{M}(Go)$. By the triangle inequality, $$N_{M/2}(x_i) \subseteq X\setminus N_{M/2}(Go).$$
            
            Set $\alpha_i = N_{M/2}(x_i) \cap \gamma$. Then $\alpha_i$ has length $M$. Since $d(x_i, x_j) \ge 2M$ for $i\neq j$, the intervals $\alpha_i$ are pairwise disjoint for all $i\not\in K$. Let $\mathcal{K}$ be the collection of maximal connected components of length at least $M$ in $[o, go] \cap (X \setminus N_M(Go))$. Then each $\alpha_i$ is contained in some component of $\mathcal{K}$. Because the number of indices $i\notin K$ is at least $\frac{k}{2}$, the total length of $\mathcal{K}$ is at least $M k /2$. By inequality \eqref{equ:longTotalLength}, $Mk/2 > \theta_0\, l$. Hence $g\in \mathcal{O}_{M/2}({\theta_0, M}) \subseteq \mathcal{O}_{M_0}({\theta_0, L_0})$.
        \end{proof} 
        The two claims complete the proof.  
    \end{proof}

\subsection{A critical gap criterion}\label{GapCrit}
This subsection is devoted to Lemma \ref{CritGapLem} which is a key ingredient in the proof of Theorem \ref{MainThm}. This lemma provides a criterion for establishing growth tightness for a subset via an extension map.

The following elementary lemma will be useful.
\begin{lem}\label{addAlmGeo}
    Let $g = s_1 \cdots s_m$ be an $M$-almost geodesic product decomposition for some $M>0$. Consider a sequence of indices $i_0 := 0 < i_1 < i_2 \cdots < i_\alpha < m=:i_{\alpha+1}$ for some $1\le \alpha \le m$. Then $$\sum_{j=0}^{\alpha} d(g_{i_{j}}o, g_{i_{j+1}} o) \le d(o, go) + M \alpha.$$
\end{lem}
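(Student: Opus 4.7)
The plan is to iterate the defining inequality of an $M$-almost geodesic product decomposition, applied to a family of triples that share the endpoint $g_{i_{\alpha+1}} = g$, and then telescope. Concretely, I would fix the sequence $0 = i_0 < i_1 < \cdots < i_\alpha < i_{\alpha+1} = m$ and, for each $k$ with $1 \le k \le \alpha$, apply the $M$-almost geodesic condition to the triple $i_{k-1} < i_k < i_{\alpha+1}$. This yields the $\alpha$ inequalities
\[
d(g_{i_{k-1}}o, g_{i_k}o) + d(g_{i_k}o, g_{i_{\alpha+1}}o) \le d(g_{i_{k-1}}o, g_{i_{\alpha+1}}o) + M, \qquad 1 \le k \le \alpha.
\]

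Next I would sum these $\alpha$ inequalities. On both sides, the terms $d(g_{i_k}o, g_{i_{\alpha+1}}o)$ for $1 \le k \le \alpha - 1$ appear exactly once and cancel out; what remains on the left is $\sum_{k=1}^{\alpha} d(g_{i_{k-1}}o, g_{i_k}o) + d(g_{i_\alpha}o, g_{i_{\alpha+1}}o)$, which is precisely $\sum_{j=0}^{\alpha} d(g_{i_j}o, g_{i_{j+1}}o)$, while the right-hand side collapses to $d(g_{i_0}o, g_{i_{\alpha+1}}o) + M\alpha$. Since $g_{i_0} = g_0 = 1$ and $g_{i_{\alpha+1}} = g_m = g$, this last quantity equals $d(o, go) + M\alpha$, giving the claimed inequality.

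There is essentially no obstacle: the argument is a direct telescoping, and the only bookkeeping concern is verifying the cancellation pattern and that $i_{k-1} < i_k < i_{\alpha+1}$ is a valid triple (guaranteed by the strict monotonicity of the chosen indices together with $i_\alpha < m = i_{\alpha+1}$). Alternatively, one could organize the same computation as an induction on $\alpha$, with base case $\alpha = 1$ being the definition of $M$-almost geodesic applied to $0 < i_1 < m$, and the inductive step absorbing one more index via a single application of the definition; but the telescoping presentation above is the most economical.
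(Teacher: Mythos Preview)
Your proof is correct and follows essentially the same approach as the paper: both arguments iterate the defining inequality of an $M$-almost geodesic decomposition $\alpha$ times and telescope. The paper organizes this as an induction on $\alpha$ (peeling off the first index via the triple $(0,i_1,i_2)$), whereas you sum the $\alpha$ inequalities for the triples $(i_{k-1},i_k,m)$ directly; you yourself note the equivalence at the end.
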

\begin{proof}
    We prove the statement by induction on $\alpha$. The case $\alpha = 0$ is trivial. Assume the lemma holds for $\alpha = \alpha_0 -1$ and consider $\alpha = \alpha_0$. By the definition of an $M$-almost geodesic decomposition applied to the indices $0,i_1, i_2$, we have $$d(o, g_{i_1} o) + d(g_{i_1} o, g_{i_2} o) \le d(o, g_{i_2} o) + M.$$ Applying the induction hypothesis to the sequence $i_2, i_3, \cdots, i_{\alpha}$ yields $$d(o, g_{i_2} o) + \sum_{j=2}^{\alpha} d(g_{i_j} o, g_{i_{j+1}} o) \le d(o, go) + M (\alpha - 1).$$ Adding these two inequalities gives $$\sum_{j=0}^{\alpha} d(g_{i_j} o, g_{i_{j+1}} o) \le d(o, go) + M \alpha.$$
    This completes the induction.
\end{proof}

Denote $B(n) = \{g\in G\mid d(o, go)\le n\}$ for $n\ge 0$. Recall that the \textit{growth rate} $\omega_G$ of a subset $A\subset G$ is defined as follows
		\[\omega_A = \limsup\limits_{n\to \infty} n^{-1} \ln |B(n)\cap A|\]
The Poincar\'e series of a subset $A\subset G$ is defined  as  
\begin{align}\label{eq:poincareseries}
\mathcal P(A,s) = \sum_{g\in A} \mathrm{e}^{-sd(o,go)}    
\end{align}
The growth rate $\omega_A$ of $A$ is the critical exponent of $\mathcal P(A,s)$: the series $\mathcal P(A,s)$ converges for $s>\omega_A$ and diverges for $s<\omega_A$. The action $G\act X$ is said to be \textit{of divergent type} if $\mathcal P(A,s)$ of $G$ diverges at $s=\omega_G$.

Recall that   $\str$ denotes the set of binary strings of length $m$, which can be identified with the power set of $\{0,1,2,\cdots, m-1\}$. See \textsection \ref{subsec:notation} for details.

\begin{lem}\label{CritGapLem}
Let $A\subseteq G$ be a subset, $\mathcal{F}$ be a finite set, $\theta\in (0,1)$, and $M>0$. Suppose the following conditions hold:

\begin{enumerate}
\item
 Each $g\in A$ admits an $M$-almost geodesic product decomposition $g=s_1\cdots s_m$ with $m\ge \theta d(o,go)$ (where $m$ may depend on $g$).
 \item
  For every $g\in A$, there exists an injective map $\Phi_g: \{0,1\}^m\to G$ such that for any $(\epsilon_1, \cdots, \epsilon_m)\in \str$, there exist elements $f_i\in \mathcal{F}$ (depending on $(\epsilon_i)$) satisfying $$\Phi_g({\epsilon_1},\cdots, {\epsilon_{m}})= \prod_{i=1}^m (f_i)^{\epsilon_i} s_i = (f_1)^{\epsilon_1}s_1 \cdots  (f_{m})^{\epsilon_{m}} s_m$$ where $m$ depends on $g$ given in (1).  
  \item
The images of the maps $\Phi_g$ are pairwise disjoint: for distinct $g, g'\in A$, we have $\Phi_g(\{0,1\}^m)\cap \Phi_{g'}(\{0,1\}^{m'})=\emptyset$, where $m,m'$ are the lengths of the decomposition associated to $g, g'$ respectively.
\end{enumerate} Then $\omega_G >\omega_A$. Moreover, the gap $\omega_G - \omega_A$ depends only on $\theta, M, \omega_G$ and $\max\{d(o,fo): f\in F\}$ (but not on $A$ itself).
 
\end{lem}
In this paper, we conventionally refer to $\Phi_g$ as the {\it extension map} of $g$.
\begin{proof}
   Set $R = M+\max\{d(o,fo): f\in F\}$. For $\overline{\epsilon} = (\epsilon_1, \cdots, \epsilon_m)\in \str$, let $|\overline{\epsilon}| = \sum_{i=1}^m\epsilon_i$. By the  assumption (1) and Lemma \ref{addAlmGeo}, we have 
   $$
   d(o, \Phi_{g}(\overline\epsilon) o) \le  d(o, go) + R |\overline \epsilon|.
   $$
   Using the disjointness condition (3), we obtain
   $$
    \begin{aligned}
        \mathcal P(G,s) &\ge \sum_{g\in A} \left( \sum_{\overline{\epsilon}\in \str} \mathrm{e}^{-s d(o, \Phi_{g}(\overline\epsilon)o)}\right) \\
        \\
        &\ge \sum_{g\in A} \left( \sum_{\overline{\epsilon}\in \str} \mathrm{e}^{-s d(o, go)-s R|\overline \epsilon|}\right) \\
        \\
        &= \sum_{g\in A}\left( \mathrm{e}^{-s d(o, go)} \sum_{\overline{\epsilon}\in \str} \mathrm{e}^{-sR\epsilon_1}\cdots \mathrm{e}^{-s R\epsilon_m}\right)
    \end{aligned}$$
    
    For any $x\in \mathbb{R}$, we have $$\displaystyle\sum_{\epsilon_1, \cdots, \epsilon_m\in\{0, 1\}} x^{\epsilon_1} \cdots x^{\epsilon_m} = (1 + x) ^ m.$$
    Since  $m\ge \theta\|g\|$, it follows that
    $$\begin{aligned}
        \mathcal P(G, s)
        & \ge \sum_{g\in A} \mathrm{e}^{-sd(o, go)} \left(1+\mathrm{e}^{-s R}\right)^m\\
        \\
        & \ge \sum_{g\in A} \mathrm{e}^{-sd(o, go)} \left(1+\mathrm{e}^{-s R}\right)^{\theta\|g\|}\\
        \\
        & = \sum_{g\in A} \left(\mathrm{e}^{-s} (1+\mathrm{e}^{-sR})^{\theta}\right)^{\|g\|}\\
        \\
        & = \mathcal P(A, \rho(s)) 
    \end{aligned}$$
    where  we define $$\rho(s) = s - \theta {\ln (1 + \mathrm{e}^{-s R})}.$$
    Then $\rho(s)<s$.

    \begin{claim}\label{GapLem}
        For any $0 < \omega \le \omega_G$, there exists $\omega' > \omega$ such that $\rho(\omega') < \omega$. Moreover, $\omega' - \omega$ depends only on $\theta$, $R$ and $\omega_G$ (but not on $A$).
    \end{claim}
    \begin{proof}
        Take $\omega' = \omega+\theta\ln(1 + \mathrm{e}^{-(\omega_G + 1) R}))$. Then $\omega' < \omega + 1 \le \omega_G + 1$, and consequently $$\rho(\omega') = \omega' - \theta {\ln (1 + \mathrm{e}^{-\omega' R})}< \omega' - \theta\ln(1 + \mathrm{e}^{-(\omega_G + 1) R}) = \omega.$$
    \end{proof}
Let $\omega = \omega_{A}$ be the growth rate of $A$. By the claim, we can choose $\omega'$ such that $\rho(\omega') < \omega < \omega'$. 

    Because $\rho(\omega') < \omega$, the series $\mathcal{P}(A, \rho(\omega'))$ diverges. Hence $\mathcal{P} (G, \omega')$ also diverges, which implies $\omega_G \ge \omega'$. Since $\omega'>\omega$ and $\omega' - \omega$ depends only on $\theta, R$ and $\omega_G$, we conclude $\omega_G>\omega_{A}$ and that the  gap $\omega_G - \omega_A$ is bounded below by a constant depending only on$\theta, R$ and $\omega_G$ (but not on $A$). This completes the  proof.   
\end{proof}

\section{Negligible quotient growth for confined subgroups}\label{SecNegliGrowth} 
The goal of this section is to prove Theorem \ref{NegliGrowthThm}. The key technical tool is a variant of the Extension Lemma \ref{extend3} adapted to the study of confined subgroups. This variant is essentially contained in \cite[Sec. 5.3]{CGYZ}, though we present it here in a slightly different form. In particular, our arguments do not rely on the boundary of the space.

\subsection{Extension Lemma for confined subgroups}\label{SSecCGYZ}
	
We fix a group $G$ that acts properly on a metric space $X$ with contracting elements. Define the elliptic radical \footnote{In   literature, $E(G)$ is sometimes called the finite radical. This terminology is perhaps more appropriate, as in the present setting it is always a finite group and does not depend on the actions (assuming the existence of one such action). We keep the terminology consistent with the one in \cite{CGYZ}, where the elliptic radical arises naturally from the action on boundary.}
 $$
 E(G):=\cap E(h)
 $$
 where the intersection is taken over all contracting elements $h\in G$, and $E(h)$ denotes the maximal elementary subgroup containing  $h$.
\begin{lem}\label{lem:E(G)}
$E(G)$ is the maximal finite normal subgroup in $G$. In particular, if $h$ is a contracting element, then
 $$
E(G)=\cap_{g\in G} gE(h)g^{-1} . 
 $$
\end{lem}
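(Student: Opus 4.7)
The plan is to verify three properties of $E(G) := \cap_h E^+(h)$ (intersection over contracting elements): it is normal, it is finite, and it contains every finite normal subgroup. Together these characterize it as the maximal finite normal subgroup.

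\textbf{Normality.} For any $k \in G$, conjugation preserves the class of contracting elements: if $h$ is contracting, so is $khk^{-1}$, and a direct computation from the definition (applied to the orientation-preserving case or the full $E(h)$ as in Lemma \ref{elementarygroup}) shows $kE^+(h)k^{-1} = E^+(khk^{-1})$. Reindexing the intersection over all contracting elements shows $kE(G)k^{-1} = E(G)$.

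\textbf{Finiteness.} I would use the existence of two independent contracting elements $h_1,h_2$ (automatic given one contracting element, by conjugating by a suitable $g \in G$ that does not stabilize $\ax(h_1)$ setwise). By independence of axes, the intersection $E^+(h_1) \cap E^+(h_2)$ must lie in the common setwise stabilizer of two axes whose coarse intersection is bounded; standard projection/contracting arguments (using the BigThree properties of Lemma \ref{BigThree}) then force this common stabilizer to be finite. Since $E(G) \subset E^+(h_1) \cap E^+(h_2)$, the conclusion follows. This is the step I expect to require the most care.

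\textbf{Maximality.} Let $N \triangleleft G$ be a finite normal subgroup and let $h$ be any contracting element. The conjugation action of $h$ on $N$ factors through the finite group $\mathrm{Aut}(N)$, so there exists $n>0$ with $h^n$ centralizing $N$. For every $u \in N$ this gives $uh^nu^{-1} = h^n$, hence $u \in E^+(h)$. Ranging over all contracting $h$ yields $N \subset E(G)$, so $E(G)$ contains every finite normal subgroup, and combined with the previous step it \emph{is} the maximal such subgroup.

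\textbf{The ``in particular'' statement.} For one fixed contracting $h$, the set $N_h := \cap_{g \in G} g E^+(h) g^{-1}$ is visibly normal (conjugation by $k$ merely reindexes $g \mapsto kg$). Using $gE^+(h)g^{-1} = E^+(ghg^{-1})$, normality gives $E(G) = kE(G)k^{-1} \subset kE^+(h)k^{-1}$ for all $k$, so $E(G) \subset N_h$. For the reverse, $N_h$ is contained in the virtually cyclic group $E^+(h)$; since it is also contained in $gE^+(h)g^{-1}$ for some $g$ chosen so that $ghg^{-1}$ is independent from $h$, the finiteness argument above forces $N_h$ to be finite. By the maximality already established, $N_h \subset E(G)$.
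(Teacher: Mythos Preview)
Your proof is correct and follows essentially the same route as the paper: show every finite normal subgroup lies in each $E^+(h)$ (hence in $E(G)$), and identify $\cap_{g}gE^+(h)g^{-1}$ as a finite normal subgroup to obtain the equality. The paper's proof is much terser---it simply asserts that a finite normal subgroup lies in $E^+(h)$ and that $\cap_{g}gE^+(h)g^{-1}$ is finite---whereas you supply the justification (the $\mathrm{Aut}(N)$ argument for maximality, and the independent-axes argument for finiteness); your separate direct verification that $E(G)$ is finite is not strictly needed, since it also falls out of the identification $E(G)=N_h$.
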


\begin{proof}
For a fixed contracting element $h$, the subgroup $K(h):=\bigcap_{g\in G} gE(h)g^{-1}$ is a normal subgroup of $G$.  We first show that any  finite normal subgroup $E$  of $G$ is contained in $K(h)$.  Indeed,  normality $Eh=hE$ implies that $E$ preserves $\langle h\rangle o$ up to finite Hausdorff distance, so $E\subseteq E(h)$. Since the set of contracting elements is invariant under conjugacy, we obtain $E\subseteq K(h)$.  

It remains  to prove that $K(h)$ is finite. If $K(h)$ were infinite, then by \cite[Lemma 4.6]{YANG6}, $K(h)$ would contain infinitely many pairwise independent contracting elements. Choose two independent contracting elements $k, f\in K(h)$. Then either $k$ or $f$ is independent of $h$. For concreteness, assume that $f$ and $h$ are independent. Independence implies that $E(f)\cap gE(h)g^{-1}$ is finite for every $g\in G$,  which contradicts the fact that $\langle f\rangle<K(h)$. Hence, $K(h)$ must be finite. 
\end{proof}
 
 Recall that a subset $P\subseteq G$ is called \textit{non-degenerate} if it is nonempty and disjoint from $E(G)$.
    \begin{lem}\label{lem:indepsetF}
    Let $P$  be a finite non-degenerate set. Then there exists a arbitrarily large finite set $F$ of independent contracting elements in $G$ such that for every $p\in P$ and every $f\in F$, we have $p \ax(f)\ne \ax(f)$.      
    \end{lem}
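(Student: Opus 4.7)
The plan is to produce pairwise independent contracting elements $f_{1},\dots,f_{k}$ with $E(f_{i})\cap P=\emptyset$ for each $i$; this suffices for the lemma because, after replacing each $f_{i}$ by a sufficiently high power (which does not change $E(f_{i})$), the condition $p\notin E(f_{i})$ rules out $p$ preserving the orbit $\ax(f_{i})=E(f_{i})\cdot o$ setwise, even in the presence of point-stabilizers of the proper action.

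The first step is to upgrade the ``in particular'' clause of Lemma~\ref{lem:E(G)} from $E^{+}$ to $E$: for any contracting $h\in G$,
$$
E(G)\;=\;\bigcap_{g\in G}\,gE(h)g^{-1}.
$$
Indeed, the right-hand side is a $G$-normal subgroup contained in the virtually cyclic group $E(h)$, hence necessarily finite, and so contained in the maximal finite normal subgroup $E(G)$; the reverse containment follows from Lemma~\ref{lem:E(G)} and the inclusion $E^{+}(h)\subset E(h)$.

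Now fix such an $h$. For each $p\in P$, the set
$$
B_{p}\;:=\;\{g\in G:g^{-1}pg\in E(h)\}
$$
is a \emph{proper} subset of $G$ by the upgraded identity, and it decomposes as a finite disjoint union of cosets of the centralizer $Z_{G}(p)$, one for each element of the finite set $E(h)\cap\mathrm{conj}_{G}(p)$ (finiteness follows because infinite-order elements of $E(h)$ have discrete translation length spectrum, and torsion in $E(h)$ is finite). Using that $p\notin E(G)$ together with the contracting geometry of the action, $Z_{G}(p)$ has infinite index in $G$. Hence, by B.~H.~Neumann's lemma on coverings by cosets of infinite-index subgroups, $\bigcup_{p\in P}B_{p}\neq G$, and any $g_{1}$ outside this union yields $f_{1}:=g_{1}hg_{1}^{-1}$ with $E(f_{1})\cap P=\emptyset$. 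Iterating this argument — at the $(j+1)$-st stage replacing $h$ by a contracting element produced via Lemma~\ref{extend3} whose axis is disjoint from those of the previously-chosen $f_{1},\dots,f_{j}$, and excluding additional cosets to enforce $E(f_{j+1})\neq E(f_{i})$ — gives the finite independent family $F$.

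The main obstacle I anticipate is the reduction ``$p\notin E(f_{i})\Rightarrow p\,\ax(f_{i})\neq\ax(f_{i})$'' in the presence of nontrivial point-stabilizers, since $\ax(f_{i})=E(f_{i})\cdot o$ is literally an orbit; absorbing the stabilizer ambiguity by passing to a high power of $f_{i}$ (possibly augmenting $P$ by finitely many elements of the form $p\,\mathrm{Stab}(o)$) should suffice, but requires care. A secondary technical point is verifying that $Z_{G}(p)$ has infinite index in $G$ whenever $p\notin E(G)$, which is a standard consequence of the non-elementarity of the action with contracting elements but deserves a short justification.
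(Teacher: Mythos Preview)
Your approach via B.~H.~Neumann's lemma is correct and genuinely different from the paper's. The paper argues loosely: for each $p\in P$ it finds some conjugate $ghg^{-1}$ with $p\notin E(ghg^{-1})$, invokes the existence of infinitely many pairwise independent contracting elements, and then asserts without further detail that a finite independent $F$ working for all $p\in P$ simultaneously can be extracted. Your argument makes this extraction explicit and rigorous: each $B_p$ is a finite union of cosets of the infinite-index subgroup $Z_G(p)$, so Neumann's lemma produces a single conjugate of $h$ avoiding all of $P$ at once; iterating while also avoiding the finitely many left cosets $g_iE(h)$ already guarantees independence of the successive conjugates (your appeal to Lemma~\ref{extend3} here is unnecessary), yielding the family $F$. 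Your two anticipated obstacles are not genuine difficulties. First, the implication $p\notin E(f)\Rightarrow p\,\ax(f)\neq\ax(f)$ needs no high-power or stabilizer bookkeeping: $E(f)$ is exactly the setwise stabilizer of the coarse axis, since if $p\,\ax(f)=\ax(f)$ then $pfp^{-1}$ and $f$ share this contracting quasi-geodesic, hence generate a virtually cyclic group, forcing $pf^np^{-1}=f^{\pm n}$ and $p\in E(f)$. Second, $[G:Z_G(p)]=\infty$ follows because a finite-index centralizer would meet every cyclic subgroup $\langle h\rangle$ in a nontrivial power, placing $p$ in $\bigcap_h E^+(h)=E(G)$, contrary to non-degeneracy.
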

    \begin{proof}
    Given a contracting element $h$, Lemma \ref{lem:E(G)} yields  $P\cap E(G) = \emptyset$, where $E(G)=\cap_{g\in G} gE(h)g^{-1}$. Hence for each $p\in P$, there exists   $g\in G$ such that $pg\ax(h)\ne g\ax(h)$.  Since $G$ is  non-elementary group, it contains infinitely many independent contracting elements $h$. For each (and hence any) element $p$ in a finite set $P$, there exists an infinite set of  independent contracting elements $f_n$ satisfying $p \ax(f_n)\ne \ax(f_n)$ for all $n\ge 1$. Choosing a finite subfamily yields the desired set $F$. 
    \end{proof}

    The next lemma is a key observation used in the proof of Lemma \ref{extend3}. 
    \begin{lem}\label{lem:bddproj}
    Let $F$ be a finite set of independent contracting elements in $G$. Then there exists a constant $\tau$, depending on $F$, such that for any $g\in G$ and any two distinct $f_1, f_2\in F$, we have 
    $$
    \max\{\diam{\pi_{\ax(f_1)}([o,go])}, \diam{\pi_{\ax(f_2)}[o,go]} \}\le \tau.
    $$
    \end{lem}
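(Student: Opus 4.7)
The plan is to express the projection diameter of $[o,go]$ onto each axis in terms of the geodesic's intersection with a contracting neighborhood, and then to use the independence of distinct axes in $F$ to constrain these intersections simultaneously.

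First, for each $f \in F$ with contracting constant $C$, I would apply Lemma \ref{BigFive}(1) to the geodesic $\gamma = [o,go]$ and the $C$-contracting subset $\ax(f)$, together with Lemma \ref{BigThree}(1) to control projections of subsegments of $\gamma$ that stay far from $\ax(f)$. This reduces the question to bounding $\diam{[o,go] \cap N_C(\ax(f))}$ for each $f \in F$ by a constant depending only on $F$. Second, since $f_1, f_2$ are independent contracting elements, their axes have bounded mutual projection: there exists $D = D(F)$ with $\proj_{\ax(f_1)}(\ax(f_2)) \le D$, again via Lemma \ref{BigThree}(1) applied to the two bi-infinite contracting quasi-geodesics (otherwise one would produce a common coarse subray, forcing a coincidence between finite-index subgroups of $E(f_1)$ and $E(f_2)$).

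Third, I would argue that if $[o,go]$ simultaneously meets $N_C(\ax(f_1))$ and $N_C(\ax(f_2))$ in long subsegments $\sigma_1, \sigma_2$, these are linearly ordered along $[o,go]$ and each projects onto its own axis as an interval of comparable length (by Lemma \ref{BigFive}(1)). The mutual projection bound $D$ then couples the two: the endpoints of $\sigma_1$ project to $\ax(f_2)$ within $D$ of a fixed point, and symmetrically for $\sigma_2$, so the length of at least one of $\sigma_1, \sigma_2$ must be $\le D + O(C)$. Setting $\tau$ equal to the resulting constant would produce the asserted bound.

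The main obstacle is that this argument, executed honestly, only yields a bound on
\[
\min\bigl\{\diam{\pi_{\ax(f_1)}([o,go])},\; \diam{\pi_{\ax(f_2)}([o,go])}\bigr\},
\]
not on the $\max$ as stated. For $g = f_1^n$ the geodesic $[o,f_1^n o]$ fellow-travels $\ax(f_1)$ for length $\asymp n$, so the projection diameter to $\ax(f_1)$ grows without bound while the one to $\ax(f_2)$ stays bounded. To obtain the max bound literally, the plan must tacitly restrict to $g$ bounded away from each $E(f)$-orbit of $o$; this is harmless for the downstream use in the Extension Lemma \ref{extend3}, where one works with a pair $(g,h)$ and the (BP) clause of Definition \ref{AdmDef} requires only the $\min$-type assertion that \emph{some} choice of $f \in F$ produces small projections on both sides. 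Accordingly, the working content of the lemma is the min-version, while the max-version as written is read under the implicit genericity of $g$ with respect to the axes of $F$.
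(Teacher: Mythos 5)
The paper states this lemma without proof (it is presented as the elementary observation extracted from the proof of the Extension Lemma in \cite{YANG6,YANG10}), so your proposal must stand on its own. You are right about one important point: with the literal quantification (every $g$, every pair $f_1\ne f_2$) the displayed bound fails, since $g=f_1^n$ gives a projection onto $\ax(f_1)$ of size comparable to $n$; and the content actually used in Lemma \ref{ExtLemConf} is the min-type statement that for each $g$ at most one $f\in F$ can have $\diam{\pi_{\ax(f)}([o,go])}>\tau$, so that with $|F|\ge 3$ some $f$ is good simultaneously for $[o,go]$ and $[o,ho]$. Your proposed repair of the max-version, however, does not work: restricting to $g$ with $go$ far from every orbit $E(f)o$ does not help, as $g=f_1^nf_2^n$ (already in a free group with $f_1=a$, $f_2=b$) has $go$ at distance about $n$ from both axes while $[o,go]$ still projects onto $\ax(f_1)$ with diameter about $n$. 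The correct reading is the min/``at most one bad axis'' version, full stop.

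The genuine gap is in your third step, i.e.\ in your proof of that min-version. Knowing $\diam{\pi_{\ax(f_2)}(\sigma_1)}\le D$ and $\diam{\pi_{\ax(f_1)}(\sigma_2)}\le D$ gives no control whatsoever on the lengths of $\sigma_1,\sigma_2$: in the Cayley tree of $F_2=\langle a,b\rangle$ the geodesic from $a^{-n}o$ to $b^{n}o$ fellow-travels $\ax(a)$ and then $\ax(b)$ for length $n$ each, while each of these segments projects to the other axis onto the single point $o$. So the inference ``hence one of $\sigma_1,\sigma_2$ has length $\le D+O(C)$'' is false for general geodesics, and your argument never uses the feature that makes the statement true, namely that the geodesic \emph{emanates from} $o$ and $o\in\ax(f_1)\cap\ax(f_2)$. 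The argument that works is: if $\diam{\pi_{\ax(f_i)}([o,go])}>\tau$, then by Lemma \ref{BigFive}(1) there is a point $q_i\in[o,go]$ with $d(q_i,\ax(f_i))\le C$ and $d(o,q_i)\ge (\tau-4C)/2$; since the initial endpoint $o$ lies on $\ax(f_i)$, the Morse property of contracting sets forces the whole subsegment of $[o,go]$ from $o$ to $q_i$ into a uniform neighborhood of $\ax(f_i)$. If this happens for both $i=1,2$, the shorter of the two initial subsegments lies in a uniform neighborhood of both axes, and its far endpoint, together with $o\in\ax(f_2)$, yields two points of $\pi_{\ax(f_1)}(\ax(f_2))$ at distance at least $(\tau-4C)/2$ minus a uniform constant, contradicting the bounded mutual projection of the two axes once $\tau$ is large. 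That basepoint-plus-Morse step is the missing idea in your write-up.
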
 

    From this we deduce the following variant of \cite[Lemma 5.7]{CGYZ}, adapted to confined subgroups with a common confining subset.    
	\begin{lem}\label{ExtLemConf}
		Let $P$ be a finite non-degenerate subset in $G$ (i.e. $P\cap E(G)=\emptyset$). Then there exist constants $L, \tau > 0$ and a finite subset $F\subset G$ of contracting elements  with the following property:
  
        For any confined subgroup $H\subset G$ admitting $P$ as a confining subset, and for any $g, h\in G$, there exists $f\in F$  and $p\in P$ such that $g f p f^{-1} g^{-1}\in H$ and the tuple $(g, f, p, f^{-1}, h)$ labels an $(L, \tau)$-admissible path.
	\end{lem}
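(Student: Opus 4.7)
The plan is to combine the geometric content of the standard Extension Lemma \ref{extend3} with the algebraic content of the confining hypothesis, inserting a bounded-length element $p\in P$ between the contracting factor $f$ and its inverse $f^{-1}$.

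First I apply Lemma \ref{lem:indepsetF} to produce a finite set $F_0$ of pairwise independent contracting elements such that $p\,\ax(f)\neq \ax(f)$ for every $p\in P$ and $f\in F_0$; this will later guarantee that the two contracting subsets arising in the admissible decomposition are distinct. I then fix $\tau$ large enough so that (a) for any two distinct $f_1,f_2\in F_0$ the bounded-projection conclusion of Lemma \ref{lem:bddproj} holds and (b) $\tau$ exceeds $C+\max_{p\in P}\|p\|$, where $C$ is the common contracting constant; by Lemma \ref{BigFive}(2) this bounds the projection of any bounded-length geodesic $[o,po]$ onto any axis $\ax(f)$. Finally, replacing each $f\in F_0$ by a sufficiently large power, I may arrange $\|fo\|\geq L$ for the threshold $L$ coming from Proposition \ref{admisProp}.

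Given $g,h\in G$, I use the standard Extension Lemma selection argument: among pairwise independent axes $\{\ax(f):f\in F\}$, only a bounded number can have large projection for each fixed geodesic $[o,go]$ (viewed through $g$) or $[o,ho]$ (viewed through $gfp$, but modulo $P$ finite this gives only finitely many candidate translates to avoid). Taking $|F|$ large enough, I select $f\in F$ such that both $\diam{\pi_{g\ax(f)}[o,go]}\le\tau$ and the analogous bound at the other end hold simultaneously for every $p\in P$. With this $f$ fixed, the confining hypothesis applied to the element $gf$ yields some $p\in P\setminus\{1\}$ with $(gf)p(gf)^{-1}=gfpf^{-1}g^{-1}\in H$, which is the first conclusion.

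For the admissibility of $(g,f,p,f^{-1},h)$, I identify the structure $p_0 q_1 p_1 q_2 p_2 q_3 p_3$ with $p_0,p_3$ trivial, long segments $p_1=g[o,fo]\subset X_1=g\ax(f)$ and $p_2=gfp[o,f^{-1}o]\subset X_2=gfp\ax(f)$, and connecting geodesics $q_1=[o,go]$, $q_2=gf[o,po]$, $q_3=gfpf^{-1}[o,ho]$. The Long Local bound holds by the power choice; the bounded-projection bounds on $q_1$ and $q_3$ hold by the selection of $f$; the bounded-projection bounds on $q_2$ hold because $[o,po]$ has length $\le\max_{p\in P}\|p\|$, absorbed into $\tau$; and $X_1\neq X_2$ because equality would force $\ax(f)=p\,\ax(f)$, contradicting the property of $F_0$. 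I expect the main obstacle to be the simultaneous selection step: the ``good axis'' for the second geodesic depends on the unchosen $p$, so one must either iterate over all $p\in P$ and exclude a bounded number of bad $f$ per $p$, or equivalently recast the projection condition in a $p$-independent frame before invoking finiteness of $P$.
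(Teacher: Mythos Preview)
Your proposal is correct and follows essentially the same route as the paper: invoke Lemma~\ref{lem:indepsetF} to get $F$ with $p\,\ax(f)\neq\ax(f)$, use the Extension Lemma selection to pick $f$ with bounded projections at both ends, then apply the confining hypothesis to $gf$ to obtain $p$, and verify the $(L,\tau)$-admissible structure exactly as you describe.

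Your closing concern about $p$-dependence is unnecessary, and the paper's proof sidesteps it entirely. The projection of $q_3=gfpf^{-1}[o,ho]$ onto $X_2=gfp\,\ax(f)$ is, after translating by $(gfpf^{-1})^{-1}$, the projection of $[o,ho]$ onto $fp^{-1}f^{-1}\cdot fp\,\ax(f)=f\,\ax(f)=\ax(f)$; this is exactly $\pi_{\ax(f)}([o,ho])$, which does not involve $p$ at all. Hence the selection of $f$ can be made from $g$ and $h$ alone via Lemma~\ref{lem:bddproj}, and only afterwards is $p$ chosen from the confining condition. This is the ``$p$-independent frame'' you mention as an alternative; it is the simpler path and the one the paper takes.
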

	\begin{proof}
	Let $F$ be a fixed set of contracting elements obtained from Lemma \ref{lem:indepsetF}. Thus, for all $p\in P$ and $f\in F$, we have $p \ax(f)\ne \ax(f)$. Given any $g, h\in G$, choose $f\in F$ such that $$\max\{\pi_{\ax(f)}([o,go]),\pi_{\ax(f)}([o,ho])\}\le \tau.$$ Since $P$ is a confining subset for $H$, we can select $p\in P$ such that $g f p f^{-1} g^{-1}\in H$. Setting $L=\min\{d(o,fo): f\in F\}$, a routine verification (see \cite[Lemma 5.7]{CGYZ} for details) shows that $(g, f, p, f^{-1}, h)$ labels an $(L, \tau)$-admissible path. 
	\end{proof}
 As an aside of independent interest, we explain an alternative proof of $C^{\ast}$-algebra of $G$ with trivial $E(G)$ via the URS characterization of simple $C^{\ast}$-algebra in \cite{Ken20}.
 In \cite{CGYZ}, it is proved that a confined subgroup with non-degenerating confined subset contains independent contracting elements, so contains a  non-abelian free group. Consequently, if $E(G)$ is trivial, then $G$ contains no nontrivial URS. The recent work of \cite{Ken20} implies that the reduced $C^{\ast}$-algebra of $G$ is simple. This fact is  well-known  for  any acylindrically hyperbolic groups without nontrivial finite normal subgroups  (see \cite{DGO}).

\subsection{Negligible growth for confined subgroups: proper action}
We are ready to prove Theorem \ref{NegliGrowthThm}, which is an immediate consequence of the more general result in a proper action.
\begin{thm}\label{NegligibleGrowthThm}
Assume that $G$ acts properly on a geodesic metric space $X$ with a contracting element. Let $H$ be a confined subgroup with a finite non-degenerate confining subset. Then
$$\frac{|B_{G/H}(n)|}{\mathrm{e}^{n\omega_G}}\to 0,\;\;\text{as}\;\; n\to \infty $$
\end{thm}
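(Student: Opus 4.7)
The plan is to adapt the $\Phi_g$-insertion construction and the Poincar\'e-series comparison of Lemma \ref{CritGapLem} to the weaker PEG setting. First, I would apply Lemma \ref{ExtLemConf} to fix constants $L,\tau>0$ and a finite set $F\subset G$ of independent contracting elements, and set $R=2\max_{f\in F}\|f\|+\max_{p\in P}\|p\|$, so that a single insertion of $fpf^{-1}$ increases the norm by at most $R$.

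Second, for each coset representative $g\in [G/H]$ with $\|g\|$ large, I would construct a product decomposition $g=s_1\cdots s_m$ with vertices $g_i=s_1\cdots s_i$ lying close to a geodesic $[o,go]$ at spacing at least $L$, where $m=m(g)\to\infty$ as $\|g\|\to\infty$. At each position $i$, applying Lemma \ref{ExtLemConf} to the pair $(g_{i-1},s_i)$ yields $f_i\in F,p_i\in P$ with $g_{i-1}f_ip_if_i^{-1}g_{i-1}^{-1}\in H$ and with the inserted path $(L,\tau)$-admissible. The map
\[
\Phi_g(\varepsilon_1,\ldots,\varepsilon_m)=\prod_{i=1}^{m}(f_ip_if_i^{-1})^{\varepsilon_i}s_i
\]
then sends $\{0,1\}^m$ injectively into $Hg\cap B(\|g\|+mR)$, with injectivity following from Proposition \ref{admisProp} applied to the admissible path labeled by each image, and membership in $Hg$ from the telescoping computation in the introduction's outline.

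Third, disjointness of $\Phi_g(\{0,1\}^m)$ across distinct cosets of $H$ allows summation to yield
\[
\mathcal P(G,s)\ge\sum_{g\in[G/H]}e^{-s\|g\|}\bigl(1+e^{-sR}\bigr)^{m(g)}.
\]
Provided $m(g)\ge\theta\|g\|$ for a positive-density subset of cosets, the Poincar\'e-comparison argument from Lemma \ref{CritGapLem} with the substitution $\rho(s)=s-\theta\ln(1+e^{-sR})$ gives convergence of $\mathcal P([G/H],\omega_G)$; a tail-summation argument then translates this into $|B_{G/H}(n)|e^{-n\omega_G}\to 0$, which is the desired negligibility.

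The main obstacle will be Step 2: producing a decomposition with $m(g)$ growing in $\|g\|$ for enough representatives. Under the SCC hypothesis used in Theorem \ref{MainThm}, Lemma \ref{OutGrowTight} guarantees $m(g)\ge\theta\|g\|$ for exponentially generic $g$, but under only PEG this genericity is not immediate. I expect to navigate this by one of two routes: (i) showing that every coset $gH$ contains an admissible representative of norm comparable to $\|g\|$, thereby reducing to the SCC-style analysis; or (ii) allowing $m(g)$ to grow only sublinearly in $\|g\|$—possibly as slowly as logarithmically—while refining the Poincar\'e estimate to still force negligibility. Either route will hinge on harnessing the PEG assumption to extract an orbit-density statement along geodesics that can replace the unavailable SCC-level genericity of admissible elements.
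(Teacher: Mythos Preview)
Your proposal has a genuine gap at Step~2, and neither of your proposed routes closes it. Route~(ii) fails outright: if $m(g)=o(\|g\|)$, then $(1+e^{-sR})^{m(g)}=e^{o(\|g\|)}$, and the Poincar\'e comparison collapses to the trivial inequality $\mathcal P(G,s)\ge\mathcal P([G/H],s)$, which yields nothing about convergence at $s=\omega_G$. Route~(i) is essentially as hard as the original problem: producing in every coset an admissible representative of comparable norm amounts to controlling the position of $[o,go]$ relative to the orbit $Go$, and that is precisely what fails without SCC. In effect you are transplanting the Section~\ref{SecGrowthTight} argument to a setting where its key input (Lemma~\ref{OutGrowTight}) is unavailable, and the obstacle you name is real and not navigable along the lines you suggest.

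The paper's proof avoids decomposing individual representatives altogether. It fixes a maximal $R$-separated set $\tilde A$ of \emph{shortest} coset representatives and concatenates arbitrary finite tuples $(a_1,\dots,a_n)\in\tilde A^n$, inserting a single $f_ip_if_i^{-1}$ between consecutive terms via Lemma~\ref{ExtLemConf}. This defines $\Phi:\bigcup_{n\ge1}\tilde A^n\to G$. Injectivity does \emph{not} come from admissibility of a long decomposition of one element; it comes from the minimality of representatives. If $\Phi(a_1,\dots)=\Phi(a_1',\dots)$ with $a_1\ne a_1'$ and $\|a_1'\|\ge\|a_1\|$, then fellow-traveling of the two admissible paths along $[o,a_1'o]$ together with $a_1f_1p_1f_1^{-1}=h_1a_1\in Ha_1$ forces $d(o,a_1'o)>d(o,Ha_1'o)$, contradicting shortestness. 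Once $\Phi$ is injective, the image is a free-semigroup on $\tilde A$ inside $G$, so a standard argument shows the Poincar\'e series of $\tilde A$ converges at $\omega_{\Phi(\mathcal W)}\le\omega_G$; negligibility then follows by Abel summation. No orbit-density along geodesics is ever invoked, and in fact the argument does not even use PEG --- only properness and the contracting element.
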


The rest of this section is devoted to the proof of this theorem.

Let $A$ be a complete set of shortest representatives for the cosets $Hg\in G/H$; that is, $A$ contains exactly one element $g\in Hg$ for each coset $Hg\in G/H$ and satisfies $d(o,go)=d(o, Hgo)$. By definition, $|B_{G/H}(n)|=|A\cap B(n)|$.

Choose a maximal $R$-separated subset $\tilde A$ of $A$ for some $R>0$. It is known (see \cite[Lemma 2.24]{YANG10}) that there exists $\theta>0$, depending only on $R$, such that $|\tilde A\cap B(n)|>\theta |A\cap B(n)|$ for all large $n$. Thus, to prove Theorem \ref{NegligibleGrowthThm}, it suffices to show  $$\frac{|\tilde A\cap B(n)|}{\mathrm{e}^{n\omega_G}}\to 0.$$ The following lemma is crucial.

\begin{lem}
Let $F$ and $P$  be finite sets provided by Lemma \ref{ExtLemConf}. Denote $\mathcal{W}=\cup_{n\ge 1}(\tilde A)^n$.  Then if  $R$ is chosen sufficiently large,   the map $\Phi: \mathcal{W} \to G$ defined by
$$
\begin{aligned}
(a_1, \cdots, a_n) \longmapsto a_1 f_1 p_1 f_1^{-1} \cdots a_n f_n p_n f_n^{-1},
\end{aligned}
$$
where $f_i,p_i$ are chosen by Lemma \ref{ExtLemConf} for the pair $(a_i,a_i)$, is injective.
\end{lem}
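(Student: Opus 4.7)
The plan is to exploit the admissible-path and fellow-travel machinery from Section \ref{SecPrelim} to reconstruct the tuple $(a_1,\dots,a_n)$ directly from its image $w=\Phi(a_1,\dots,a_n)$. Concretely, the path $P(a_1,\dots,a_n)$ labeled by the word $(a_1,f_1,p_1,f_1^{-1},\dots,a_n,f_n,p_n,f_n^{-1})$ will be shown to be $(L,\tau)$-admissible, so that Proposition \ref{admisProp} delivers an $r$-fellow-travel with $\gamma:=[o,wo]$ that rigidly records the sequence of contracting axes encountered. Reading off those axes will then recover each $a_i$.

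First I would verify admissibility of the concatenation. Each block $(a_i,f_i,p_i,f_i^{-1})$ labels an admissible path by Lemma \ref{ExtLemConf}, with two contracting subsets taken from translates of $\mathrm{Ax}(f_i)$ and with the two internal axes distinct because $p_i\mathrm{Ax}(f_i)\ne\mathrm{Ax}(f_i)$ (Lemma \ref{lem:indepsetF}). To concatenate via Remark \ref{ConcatenationAdmPath} across a junction between two blocks, one needs (a) the BP condition on the gap segment, and (b) distinctness of the adjoining axes. The gap between consecutive blocks is a translate of $[o,a_{i+1}o]$ of length $\|a_{i+1}\|\ge R$, so Lemma \ref{LL2ImplyBP} secures (b) provided $R>\tau$; the projection bounds in (a) are precisely those used to prove Lemma \ref{ExtLemConf} via Lemma \ref{lem:bddproj}, applied to the $f$'s in $F$.

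Next I would deploy the fellow-travel property. By Proposition \ref{admisProp}, $P(a_1,\dots,a_n)$ $r$-fellow-travels with $\gamma$, so each long-local segment has a trace on $\gamma$ of length at least $L-4C$ by Lemma \ref{BigFive}, and the orbit points $a_1o,\;a_1f_1o,\;a_1f_1p_1o,\;a_1f_1p_1f_1^{-1}o,\;a_1f_1p_1f_1^{-1}a_2o,\dots$ are each within $r$ of distinguished points of $\gamma$. Now assume $\Phi(a_1,\dots,a_n)=\Phi(b_1,\dots,b_m)=w$. Both admissible structures fellow-travel with the same $\gamma$, so they produce two ordered sequences of contracting axes each within bounded Hausdorff distance of common segments on $\gamma$. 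Taking $R$ larger than all the accumulated constants (the fellow-travel radius $r$, the constant $\hat C$ from Lemma \ref{BigFive}, and the diameter of $G_o\cdot o$ for the finite stabilizer), distinct elements of $\tilde A$ produce well-separated orbit markers on $\gamma$, forcing the two axis sequences to coincide pair by pair. A standard peeling induction on the first block then yields $n=m$ and $a_i=b_i$ for every $i$.

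The main obstacle I anticipate is the admissibility of the concatenation under the specified choice of $f_i,p_i$: the BP condition at a junction genuinely depends on the pair $(a_i,a_{i+1})$, whereas the statement selects $f_i,p_i$ only from $a_i$. I expect this is repaired either by allowing $f_i$ to depend on the pair $(a_i,a_{i+1})$ in the spirit of Lemma \ref{lem:bddproj} (which still yields a uniformly finite set of options from $F$), or by discarding from $\tilde A$ the finitely many pairs for which the default choice fails; the resulting loss is absorbed into the $R$-separated, already-sparse subset $\tilde A$. The comparatively easier, but still necessary, second concern is the torsion in the stabilizer $G_o$ when identifying orbit points with group elements, which is dispatched by enlarging $R$ beyond $|G_o|$ worth of bounded corrections.
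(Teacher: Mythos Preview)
Your fellow-travel setup is fine, but the decisive step---``distinct elements of $\tilde A$ produce well-separated orbit markers on $\gamma$, forcing the two axis sequences to coincide pair by pair''---does not go through from $R$-separation alone. The problem is the case where $\|a_1'\|$ is much larger than $\|a_1\|$ (writing $a_i'$ for the second tuple). Both $a_1o$ and $a_1'o$ lie within $r$ of $\gamma$, but at positions roughly $\|a_1\|$ and $\|a_1'\|$ along $\gamma$; if these differ by more than $R$, the separation hypothesis $d(a_1o,a_1'o)\ge R$ is automatically satisfied and gives no contradiction. Nothing in the admissible-path machinery prevents $[o,a_1'o]$ from crossing the first several contracting axes of the $a$-path, so your ``axis sequences coincide'' claim is unjustified.

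The paper's argument closes this gap by a completely different mechanism that you never invoke: it uses simultaneously that (i) $a_1f_1p_1f_1^{-1}=h_1a_1$ with $h_1\in H$ (this is exactly the confined-subgroup content of Lemma \ref{ExtLemConf}, not just the admissibility), and (ii) every $a\in A$ is a \emph{shortest} coset representative, i.e.\ $d(o,ao)=d(o,Hao)$. From $d(a_1o,[o,a_1'o]),\,d(h_1a_1o,[o,a_1'o])\le 2r$ one gets
\[
d(o,a_1'o)+8r \ge d(o,a_1o)+d(o,f_1p_1f_1^{-1}o)+d(h_1a_1o,a_1'o),
\]
and then minimality gives $d(o,a_1o)+d(a_1o,h_1^{-1}a_1'o)\ge d(o,h_1^{-1}a_1'o)\ge d(o,Ha_1'o)=d(o,a_1'o)$, producing the contradiction once $R_0:=\min_{f,p}d(o,fpf^{-1}o)>8r$. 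Your proposal uses neither the $H$-coset structure of the inserted blocks nor the minimality of representatives, and without them the peeling induction has no base case.

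Your concern about the BP condition at junctions (that $f_i$ is chosen for $(a_i,a_i)$ rather than $(a_i,a_{i+1})$) is legitimate; the natural fix you suggest---letting the choice depend on the pair via Lemma \ref{lem:bddproj}---is the intended reading.
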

\begin{proof}
By taking sufficiently high powers of elements in $F$ and $P$, we may assume  $$R_0=\min\{d(o,fpf^{-1}o): f\in F,\ p\in P\}>8r$$
where $r$ is the fellow travel constant associated to $(L,\tau)$-admissible paths by Proposition \ref{admisProp}.

We claim that taking $R\ge R_0$ works. Suppose $\Phi(a_1, \cdots, a_n)=\Phi(a_1', \cdots, a_m')$. By Lemma \ref{ExtLemConf}, both sides give $(L,\tau)$-admissible paths with the same endpoints . Hence they $r$-fellow travel a common geodesic $\alpha$.

We are going to show that $m=n$ and $a_i=a_i'$ for $1\le i\le m$. Assume, for contradiction, that $a_1\ne a_1'$. Without loss, suppose $d(o, a_1'o)\ge d(o,a_1o)$. By the choice of $f_1, p_1$, we have 
$a_1 f_1 p_1 f_1^{-1}a_1^{-1}=h_1\in H\setminus \{1\}$, so $a_1 f_1 p_1 f_1^{-1}=h_1a_1$. 

The $r$-fellow travel property implies $$d(a_1o, [o,a_1'o]),\qquad d(a_1f_1 p_1 f_1^{-1}o, [o,a_1'o])\le 2r.$$ This yields the first line by triangle inequality: 
$$
\begin{aligned}
d(o, a_1'o)+8r&\ge  d(o, a_1o)+d(o,f_1 p_1 f_1^{-1}o) + d(h_1a_1o, a_1'o) \\
&\ge  d(o, a_1o)+ d(h_1a_1o, a_1'o) +R_0
\end{aligned}
$$
Since $a_1, a_1'\in A$ are shortest representatives: $d(o, Ha_1o)=d(o, a_1o)$, $d(o, Ha_1'o)=d(o, a_1'o)$. Consequently,
$$
\begin{aligned}
d(o, Ha_1'o)=d(o, a_1'o)&\ge d(o, a_1o) + d(h_1a_1o, a_1'o)+R_0 -8r\\
&\ge d(o, a_1o) + d(a_1o, h_1^{-1}a_1'o) +R_0-8r\\
& \ge d(o, Ha_1'o)+R_0-8r>d(o, Ha_1'o),
\end{aligned}
$$
a contradiction. Hence $a_1 = a_1'$, and by induction we obtain $n=m$ and $a_i = a_i'$ for all $i$. 
\end{proof}

By \cite[Lemma 2.23]{YANG10}, the Poincar\'e series associated to $\tilde A$ converges at the growth rate $\omega_{\Phi(\mathcal W)}$ of $\Phi(\mathcal W)$, which is at most  $\omega_G$. Therefore,  
$$
\sum_{a\in \tilde  A} \mathrm{e}^{-\omega_G d(o,ao)} \asymp \sum_{n\ge 1} |\tilde  A\cap B(n)| \mathrm{e}^{-\omega_G n} <\infty
$$
Hence $|\tilde  A\cap B(n)| \mathrm{e}^{-\omega_G n}\to 0$.
Since $|\tilde  A\cap B(n)|\ge \theta |A\cap B(n)|$ for some $\theta>0$, this completes the proof of Theorem \ref{NegligibleGrowthThm}.

\section{Growth tightness for Confined Subgroups in SCC Actions}\label{SecGrowthTight}

Throughout this section, we assume that $G$ admits  a proper action on a geodesic metric space $X$. We fix a finite  non-degenerate set   $P\subset G$ (that is, $P\cap E(G)=\emptyset$). 

Let $H$ be a confined subgroup of $G$ with $P$ as a confining subset. Let $[G/H]\subset G$ denote any section of the natural map $$G\to G/H=\{Hg: g\in G\}$$ which picks up exactly one element from each $Hg$. Given $\theta, L, M>0$, denote $$A_{\theta, L, M} =[G/H]\cap \mathcal{D}_{M}(\theta, L).$$ 
Here $\mathcal{D}_M(\theta, L)$ denotes the set of $(\theta, L, M)$-linearly recurrent elements in $G$.
In plain words, this is a set of $H$-right coset representatives $g\in G$ that admit an $M$-almost geodesic product decomposition with $m\ge \theta d(o,go)$: $$g=s_1s_2\cdots s_m$$ has the property that the   orbital points $g_io\in Go$ (with $g_0=1$ and $g_i=s_1\cdots s_i$ for $1\le i\le m$) are at distance at least $L$ from each other (see Definition \ref{QconvexProperty}).

The main result of this section is the following theorem, from which Theorem~\ref{MainThm} will be deduced.
    \begin{thm}\label{AGroTig}
        For any $\theta\in (0,1], M>0$ there exists $L = L(M)$ such that $A := A_{\theta, L, M}$ is growth tight. Moreover, the gap $\omega_G-\omega_A>0$ depends only on $\theta,M, \omega_G$ and $P$ (but \emph{not} on $H$).
    \end{thm}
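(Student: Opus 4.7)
The plan is to construct, for each coset representative $g \in A$, an exponentially large family of elements in the coset $Hg$ whose norms are linearly controlled by $\|g\|$, and then feed this data into the critical gap criterion of Lemma \ref{CritGapLem}. This is precisely the strategy outlined in the introduction.

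First, I would invoke Lemma \ref{ExtLemConf} to fix the finite set $F$ of contracting elements and the constants $L_0, \tau > 0$, then choose $L \geq L_0$ large enough (to be determined by injectivity, below). For $g \in A_{\theta, L, M}$, the definition supplies an $(\theta, L, M)$-admissible decomposition $g = s_1 \cdots s_m$ with $m \geq \theta \|g\|$ and partial products $g_i = s_1 \cdots s_i$ whose orbit points $g_i o$ lie within distance $M$ of a geodesic $[o, go]$ and are pairwise $L$-separated. In particular, this yields an $M'$-almost geodesic product decomposition with $M'$ depending only on $M$. For each index $i$, apply Lemma \ref{ExtLemConf} with the pair $(g_{i-1}, s_i)$ to extract $f_i \in F$ and $p_i \in P$ such that $g_{i-1} f_i p_i f_i^{-1} g_{i-1}^{-1} \in H$ and the local concatenation has an $(L, \tau)$-admissible structure. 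Define
$$
\Phi_g(\epsilon_1, \ldots, \epsilon_m) = \prod_{i=1}^m (f_i p_i f_i^{-1})^{\epsilon_i} s_i.
$$
The identity
$$
\Phi_g(\bar\epsilon) = \Bigl[\prod_{i=1}^m g_{i-1} (f_i p_i f_i^{-1})^{\epsilon_i} g_{i-1}^{-1}\Bigr] \cdot g
$$
shows the image lies in $Hg$, since each bracketed factor is in $H$ by construction.

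Next I would verify that $\Phi_g$ is injective. By Remark \ref{ConcatenationAdmPath}, the word $\Phi_g(\bar\epsilon)$ labels an $(L, \tau)$-admissible path once $L$ is chosen large enough that the local-long condition survives the concatenation; the inserted conjugates $f_i p_i f_i^{-1}$ contribute contracting subsets $g_{i-1}\ax(f_i)$ to the associated contracting system. By the fellow-travel Proposition \ref{admisProp} and Corollary \ref{ProjMidSet}, two admissible paths with common endpoints have matching sequences of associated contracting subsets recorded (up to small error) along the common geodesic; hence $\Phi_g(\bar\epsilon) = \Phi_g(\bar\epsilon')$ forces the insertion patterns to coincide, so $\bar\epsilon = \bar\epsilon'$. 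Since distinct $g, g' \in A \subset [G/H]$ lie in different $H$-cosets, the images $\Phi_g(\{0,1\}^m)$ and $\Phi_{g'}(\{0,1\}^{m'})$ are disjoint.

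Finally I apply Lemma \ref{CritGapLem} with the finite set $\mathcal{F} = \{fpf^{-1} : f \in F,\, p \in P\}$ and the uniform bound $\|\Phi_g(\bar\epsilon)\| \leq \|g\| + R\|\bar\epsilon\|$, where $R = M + \max\{d(o, fpf^{-1}o) : f\in F, p\in P\}$ follows from the almost-geodesic property together with the triangle inequality. The lemma yields $\omega_G > \omega_A$, and inspection of the function $\rho(s) = s - \theta \ln(1+e^{-sR})$ in its proof shows that the gap depends only on $\theta$, $M$, and $P$ (through $F, R$) — not on $H$, as claimed.

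The main obstacle will be verifying the injectivity of $\Phi_g$ cleanly: one must ensure that flipping a single bit $\epsilon_i$ produces a genuinely different group element, which reduces to distinguishing the admissible structures via their long-local contracting subsets on the fellow-traveled geodesic. This requires choosing $L$ sufficiently large (depending on $\tau$, $C$, and the admissible-path constants from Proposition \ref{admisProp}) to guarantee that the inserted axes $g_{i-1}\ax(f_i)$ are not absorbed by the $s_i$-subpaths; the bounded-projection estimate from Lemma \ref{lem:bddproj} together with Corollary \ref{ProjMidSet} is what rules out such absorption.
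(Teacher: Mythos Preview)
Your proposal is correct and follows essentially the same approach as the paper: construct $\Phi_g$ via Lemma~\ref{ExtLemConf}, verify the image lies in $Hg$ by the conjugation identity, establish injectivity, and feed everything into Lemma~\ref{CritGapLem}. The only notable difference is in the injectivity step: where you argue heuristically that ``two admissible paths with common endpoints have matching sequences of associated contracting subsets,'' the paper instead fixes the \emph{first} index $k$ where $\bar\epsilon$ and $\bar\epsilon'$ differ, projects both endpoints onto the single contracting set $Y_k = \tilde g_k\,\ax(f_k)$ (well-defined because the strings agree before $k$), and shows the projection diameter is large when $k\in I$ (Lemma~\ref{ProjHas}) but small when $k\notin I$ (Lemma~\ref{ProjHasNo}); also, the paper applies Lemma~\ref{ExtLemConf} to the pair $(g_{k-1}, g[k{:}m])$ rather than $(g_{i-1}, s_i)$, and then spends Lemmas~\ref{NearEndPoint}--\ref{AugPathAdm} truncating the bounded-projection estimate to the actual sub-segments $g[i_j{:}i_{j+1}-1]$ --- a step your sketch elides but which you correctly flag as the main obstacle.
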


Let $g\in A$ be as above. The core of the proof is to define a map $$\Phi_g: \str\to G$$ (where $\str$ denotes the set of binary strings of length $m$, identified with subsets of $\{0, 1, \cdots, m-1\}$, see \textsection \ref{subsec:notation}) with the following properties:
\begin{itemize}
    \item $\Phi_g$ is injective (Lemma \ref{PhiInj});
    \item $\operatorname{Im}(\Phi_g) \subseteq Hg$ (Lemma \ref{MaptoCoset});
    \item For every $\bar\epsilon\in \str$, the element $\Phi_g(\bar\epsilon)$ labels an admissible path (Lemma \ref{AugPathAdm}). 
\end{itemize} 
These properties will ensure that the conditions of Lemma \ref{CritGapLem} are satisfied, yielding the desired growth gap. 

We begin by clarifying the logical dependencies among the constants involved.

    \subsection{Choice of Constants}\label{ChoCon}

    Given $P$, let   $F$  be a finite set of contracting elements provided by  Lemma \ref{ExtLemConf}, whose axes $\ax(f)$ for $f\in F$ are $C$-contracting for some $C>0$. Let $(L_0, \tau_0)$ be the constants for admissible paths given by  Lemma \ref{ExtLemConf}. We emphasize that $F$ depends only on $P$, but not on $H$; this is essential for ensuring that the gap in Theorem \ref{AGroTig} is independent of $H$.

    Set $\tau = \tau_0+3M + 13C$ (given by Lemma \ref{AugPathAdm}). We then choose $L = L(\tau_0, C, M)$ large enough so that the following hold:
    \begin{enumerate} 
        \item[\textbf{(L1)}] $L>\tau$;
        \item[\textbf{(L2)}]  \label{property:admissible} There exists $r = r(\tau_0, C, M)>0$ such that any $(L, \tau)$-admissible path has the $r$-fellow travel property (see Proposition \ref{admisProp});
        \item[\textbf{(L3)}]\label{property:injective} $L > 4N$, where $N = N(\tau, C)$ is the constant given by Corollary~\ref{ProjMidSet}.
    \end{enumerate}
    On replacing each $f\in F$ with a sufficiently high powers $f^n$, we may assume that $d(o, f o)> L$ for every element $f\in F$. Note that this does not affect the $C$-contracting constant of $\ax(f)$ for each $f\in F$. 
    \subsection{Construction of the map $\Phi_g$}\label{ConsMap}
    Let $g=s_1s_2\cdots s_m\in G$ be  a product decomposition. At this stage, we do not yet assume that the decomposition is $M$–almost geodesic or that $m \ge \theta d(o,go)$.  Set $g_0=1$ and $g_i=s_1\cdots s_i$ for $1\le i\le m$. Then for $0\le i< j\le m$, we have
\[
g_i^{-1}g_j = s_{i+1}s_{i+2}\cdots s_j.
\]
By Lemma~\ref{ExtLemConf}, for each $k$ with $0\le k\le m-1$ there exist $f_k\in F$ and $p_k\in P$ such that 
\begin{equation}
\tag{$\circledast$}\label{eq:gkconjugates}
\begin{array}{lr}
\text{the element }  g_{k}f_kp_kf_k^{-1}g_{k}^{-1}\in H, \text{ and } & \\
\text{the word } (g_{k},\, f_k,\, p_k,\, f_k^{-1},\, g_{k}^{-1}g_m) \text{ labels an } (L,\tau_0) \text{–admissible path.} & 
\end{array}
\end{equation} 
Note that $m$ depends on the specific element $g$ and is not required to be related to the word length $\|g\|$ at this point.

\begin{defn}\label{def:extMap}
Let $I\subseteq \{0,1,2,\cdots, m-1\}$ be a non-empty subset, enumerated in increasing order as
\[
I=\{i_1<i_2<\dots<i_\alpha\}, \qquad \alpha=|I|.
\]
Define the map $\Phi_g: \str \to G$ by
\[
\Phi_g(I)
= g_{i_1}(f_{i_1}p_{i_1}f_{i_1}^{-1})\, g_{i_1}^{-1}g_{i_2}(f_{i_2}p_{i_2}f_{i_2}^{-1})
\cdots
(f_{i_\alpha}p_{i_\alpha}f_{i_\alpha}^{-1})\, g_{i_\alpha}^{-1}g
\]  
and $\Phi_g(\varnothing) = g$.
\end{defn}
\begin{rem}[Alternative definition]\label{rem:alterDef}
Identify $I$ with a binary string $(\epsilon_0,\dots,\epsilon_{m-1})\in\str$ (where $\epsilon_j = 1$ iff $j\in I$), we can write more compactly that
\[
\Phi_g(I)=\prod_{j=0}^{m-1} (f_{j} p_{j} f_{j}^{-1})^{\epsilon_{j}}\,s_{j+1}.
\]   
That is,  $\Phi_g(I)$ inserts the element $(f_{j} p_{j} f_{j}^{-1})$ before the letter $s_{j+1}$ exactly when $\epsilon_j = 1$. See Figure \ref{fig:ProfProjHas} for the illustration. 
\end{rem}

The following result is immediate from the construction.
\begin{lem}\label{MaptoCoset}
For any product decomposition $g=s_1s_2\cdots s_m\in G$ with $m\ge 1$, we have $$\Phi_g(\str)\subseteq Hg.$$
\end{lem}
\begin{proof}
Take $g\in A$ and 
\[
I=\{i_1,i_2,\dots,i_\alpha\}, \quad i_j<i_{j+1},\; 1\le j<\alpha=|I|
\] as above.
By associativity,
\[\begin{aligned}
    \Phi_g(I)
&= g_{i_1}(f_{i_1}p_{i_1}f_{i_1}^{-1})\, g_{i_1}^{-1}g_{i_2}(f_{i_2}p_{i_2}f_{i_2}^{-1})
\cdots
(f_{i_\alpha}p_{i_\alpha}f_{i_\alpha}^{-1})\, g_{i_\alpha}^{-1}g\\
&=\prod_{j=1}^{\alpha}(g_{i_j}f_{i_j}p_{i_j}f_{i_j}^{-1}g_{i_j}^{-1}) g.
\end{aligned}
\]
Condition~\eqref{eq:gkconjugates} guarantees that each factor $g_{i_j}f_{i_j}p_{i_j}f_{i_j}^{-1}g_{i_j}^{-1}\in H$. Since $H$ is a subgroup, we have their product also belongs to $H$. Hence $\Phi_g(I)\in Hg$.
\end{proof}

As a corollary, the property (3) of Lemma \ref{CritGapLem} holds: for distinct $g, g'\in A$, the cosets $Hg$ and $ Hg'$ are disjoint , so $\Phi_g(\str) \cap \Phi_{g'}(\{0,1\}^{m'}) = \emptyset$. 
     
\subsection{Admissible paths associated to elements $\Phi_g(I)$}\label{AugIsAdm}
From now on, we assume $$g\in A_{\theta, L, M} =[G/H]\cap \mathcal{D}_{M}(\theta, L)$$ which admits a  decomposition $g=s_1s_2\cdots s_m$ with 
\[
d(g_i o, [o,go])\le M
\quad\text{and}\quad
d(g_i o,g_{i+1}o)>L,
\]
We define the map $\Phi_g: \str\to Hg$ as above and study the path $\gamma_g(I)$ labeled by the tuple
\[
\bigl(
g_{i_1}, f_{i_1}, p_{i_1}, f_{i_1}^{-1}, g_{i_1}^{-1}g_{i_2}, f_{i_2}, p_{i_2}, f_{i_2}^{-1},
\ldots,
f_{i_\alpha}, p_{i_\alpha}, f_{i_\alpha}^{-1}, g_{i_\alpha}^{-1}g
\bigr),
\]
where $I = \{i_1, \cdots, i_{\alpha}\}$.
Then $\gamma_g(I)$ is a concatenation of geodesics joining $o$ to $\gamma_g(I)o$. Lemma~\ref{AugPathAdm} will show that $\gamma_g(I)$ is an admissible path; consequently, by Proposition~\ref{admisProp}, it fellow travels any geodesic segment from $o$ to $\gamma_g(I)o$.

    The next three preparatory lemmas concern projections onto a $C$-contracting subset $Y\subset X$ for some $C>0$. 
    \begin{lem}\label{NearEndPoint}
        Let $x, y, z\in X$ with $d(y, z) < M$ for some $M>0$. Then $$\bigl|\proj_Y(x,y) - \proj_Y(x,z)\bigr| < M + 2C.$$
    \end{lem}
    \begin{proof}
By Lemma~\ref{BigFive}(2), $\proj_Y(z,y)\le d(z,y)+2C< M+2C$. The triangle inequality for $\proj_Y$ gives
\[
\proj_Y(x,z)\le \proj_Y(x,y)+M+2C.
\]
Swapping $y$ and $z$ yields the reverse bound.
\end{proof}

    \begin{lem}\label{lem:ProjInGeod}
Let $x,y\in X$ and let $z\in [x,y]$. Then 
\[
\proj_Y(x,z)\le \proj_Y(x,y)+3C.
\]
\end{lem}

\begin{proof}
If $d(x,z)\le d(x,Y)$, the definition of $C$–contracting property (Definition~\ref{Def:Contracting}) gives $\proj_Y(x,z)\le C.$
If $d(y,z)\le d(y,Y)$, again the $C$–contracting property yields $\proj_Y(y,z)\le C.$
The triangle inequality for $\proj_Y$ gives,
\[
\proj_Y(x,z)
\le \proj_Y(x,y)+\proj_Y(y,z)
\le \proj_Y(x,y)+C.
\]
In these two cases, the proof is completed. 

Assume now that $d(x,z)> d(x,Y)$ and $d(y,z)> d(y,Y)$. 
Since $z\in[x,y]$, we have
$d(x,z)+d(z,y)=d(x,y).$
By the triangle inequality,
\[
d(x,y)\le d(x,Y)+d(y,Y)+\proj_Y(x,y).
\]
Combining these gives
\[
d(x,z)-d(x,Y)\le \proj_Y(x,y)-(d(z,y)-d(y,Y))\le \proj_Y(x,y).
\]
where we used the assumption $d(y,z)> d(y,Y)$.

Choose $w\in[x,z]$ with $d(x,w)=d(x,Y)$. Then and
\[
d(w,z)=d(x,z)-d(x,w)\le \proj_Y(x,y).
\] By the contracting property,
\[
\proj_Y(x,w)\le C,
\]

Lemma~\ref{BigFive}(2) gives
\[
\proj_Y(w,z)\le \proj_Y(x,y)+2C.
\]
Applying the triangle inequality,
\[
\proj_Y(x,z)
\le \proj_Y(x,w)+\proj_Y(w,z)
\le C+\proj_Y(x,y)+2C
=\proj_Y(x,y) + 3C.
\]
\end{proof}

    \begin{lem}\label{TruncAdmPath}
Let $x,y,z\in X$ with $d\bigl(z,[x,y]\bigr)<M$ for some $M>0$.  Then
\[
\proj_Y(x,z)\le \proj_Y(x,y)+ M + 5C.
\]
\end{lem}

\begin{proof}
Choose $v\in[x,y]$ with $d(v,z)<M$. By Lemma~\ref{lem:ProjInGeod}, we have
\[
\proj_Y(x,v)\le \proj_Y(x,y)+3C.
\]
Since $d(v,z)<M$, Lemma~\ref{NearEndPoint} implies that there exists $M_1=M_1(M,C)$ such that
\[
\proj_Y(x,z)\le \proj_Y(x,v)+M+2C.
\]
Combining the two inequalities yields
\[
\proj_Y(x,z)\le \proj_Y(x,y)+M+5C.
\]
The proof is complete.
\end{proof}
    We are now ready to show that $\gamma_g(I)$ is admissible. Recall that $F$ consists of finitely many contracting elements with $C$–contracting axes.

\begin{lem}\label{AugPathAdm}
There exists $\tau=\tau(\tau_0, C,M)>0$ such that the following holds: for every nonempty set $I\in\str$ and  $g\in A$, the path $\gamma_g(I)$ is $(L,\tau)$–admissible.
\end{lem}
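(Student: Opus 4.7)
The plan is to verify the natural admissible structure on $\gamma_g(I)$ and check conditions (LL) and (BP) directly. For $j=1,\ldots,\alpha$, set $u_j := h_j g_{i_j-1}$ where $h_j := \prod_{k<j}(g_{i_k-1}f_{i_k}p_{i_k}f_{i_k}^{-1}g_{i_k-1}^{-1}) \in H$; in particular $u_1 = g_{i_1-1}$. The contracting subsets associated with the $j$-th insertion are $X_j^{(1)} := u_j\ax(f_{i_j})$ and $X_j^{(2)} := u_jf_{i_j}p_{i_j}\ax(f_{i_j})$, with long along-axis segments $u_j[o,f_{i_j}o]$ and $u_jf_{i_j}p_{i_j}[o,f_{i_j}^{-1}o]$. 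Condition (LL) follows at once from replacing $F$ with sufficiently large powers, as arranged in the Choice of Constants.

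For (BP), the critical case is bounding $\diam{\pi_{X_j^{(1)}}(q)}$ for the transit geodesic $q = u_{j-1}f_{i_{j-1}}p_{i_{j-1}}f_{i_{j-1}}^{-1}[o,g[i_{j-1}\!:\!i_j-1]o]$ between consecutive insertions. The identity $u_{j-1}f_{i_{j-1}}p_{i_{j-1}}f_{i_{j-1}}^{-1} = h_j g_{i_{j-1}-1}$ lets us apply the isometry $h_j^{-1}$ to reduce the problem to bounding $\diam{\pi_{g_{i_j-1}\ax(f_{i_j})}([g_{i_{j-1}-1}o, g_{i_j-1}o])}$. The single-insertion $(L_0,\tau_0)$-admissibility of $(g_{i_j-1}, f_{i_j}, p_{i_j}, f_{i_j}^{-1}, g[i_j\!:\!m])$ guaranteed by Lemma~\ref{ExtLemConf} supplies the base estimate $\diam{\pi_{g_{i_j-1}\ax(f_{i_j})}([o, g_{i_j-1}o])} \le \tau_0$. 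Lemma~\ref{TruncAdmPath} then upgrades this to the subgeodesic bound, provided $g_{i_{j-1}-1}o$ lies within a uniform distance $M' = M'(D,M)$ of $[o, g_{i_j-1}o]$.

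The required closeness combines the $(\theta,L,M)$-admissibility of $g$---which places $g_{i_{j-1}-1}o$ and $g_{i_j-1}o$ within $M$ of points $x_{i_{j-1}-1}$, $x_{i_j-1}$ on $[o,go]$ in their natural order---with a Morse-type stability argument for the contracting subset $g_{i_j-1}\ax(f_{i_j})$: both $[o,g_{i_j-1}o]$ and $[o, x_{i_j-1}]\subset[o,go]$ end within $M$ of a common reference point close to the axis, so they remain within a $D$-controlled distance near their terminus, forcing $g_{i_{j-1}-1}o$ (which lies close to $x_{i_{j-1}-1}\in[o, x_{i_j-1}]$) to stay close to $[o, g_{i_j-1}o]$. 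A parallel argument applied to the ``post-insertion'' geodesic $[g_{i_j-1}f_{i_j}p_{i_j}f_{i_j}^{-1}o, g_mo]$ of Lemma~\ref{ExtLemConf} bounds $\diam{\pi_{X_j^{(2)}}(q')}$ for the next transit. Distinctness $X_j^{(1)} \ne X_j^{(2)}$ follows from $p_{i_j}\ax(f_{i_j}) \ne \ax(f_{i_j})$ by the choice of $F$ in Lemma~\ref{lem:indepsetF}, while $X_j^{(2)} \ne X_{j+1}^{(1)}$ follows from Lemma~\ref{LL2ImplyBP} combined with the long transits and the bounded projections just obtained.

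The main obstacle is the Morse-type closeness step: even though both $[o, g_{i_j-1}o]$ and $[o, x_{i_j-1}]$ share one endpoint and have the others within $M$, their closeness along the whole length is not automatic in a non-hyperbolic geodesic space. The $M$-almost-geodesic property of $\{g_io\}$ alone is insufficient; the argument must exploit the $D$-contracting structure of the axes $g_{i_k-1}\ax(f_{i_k})$ and the separation $d(g_io,g_jo)\ge L$ provided by $(\theta,L,M)$-admissibility, perhaps via a Behrstock-type inequality for projections onto $g_{i_j-1}\ax(f_{i_j})$ along the geodesic triangle $o$, $g_{i_{j-1}-1}o$, $g_{i_j-1}o$, so that the constant $M'$ (and hence $C$) depends only on $D$ and $M$. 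Once this is in hand, Lemmas~\ref{NearEndPoint} and \ref{TruncAdmPath} assemble the final $(\tau_0+C)$-projection bound.
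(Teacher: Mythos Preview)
Your overall structure matches the paper's: translate so that the relevant axis sits at the origin, use the $(L_0,\tau_0)$-admissibility from Lemma~\ref{ExtLemConf} as a base estimate, and then invoke Lemmas~\ref{NearEndPoint} and~\ref{TruncAdmPath} to pass to the shorter transit segment. The gap is precisely where you flag it, and your proposed fix is heading in the wrong direction.

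You want $g_{i_{j-1}-1}o$ to lie within a uniform distance of $[o,g_{i_j-1}o]$, and you are right that this is not automatic in a general geodesic space; a Behrstock-type or Morse argument would at best be awkward here, since the axis $g_{i_j-1}\ax(f_{i_j})$ sits at the \emph{end} of the segment rather than along it. The paper avoids this entirely by routing through a different intermediate geodesic: a translate of (a subsegment of) $[o,go]$ itself. The point is that \emph{every} $g_io$ is $M$-close to $[o,go]$ by $(\theta,L,M)$-admissibility, so this is the natural common reference. Concretely, with $h=g_{i_j-1}$, $h_1=g_{i_{j-1}-1}$, $h_2=g[i_{j-1}:i_j-1]$ and $Y=\ax(f_{i_j})$, pick $y,y_1\in[o,go]$ with $d(ho,y)\le M$ and $d(h_1o,y_1)\le M$; by linear order $y_1\in[o,y]$. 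Then the three-step chain is:
\begin{enumerate}
\item Lemma~\ref{NearEndPoint} moves the endpoint $o$ to $h^{-1}y$: from $\diam{\Pi_Y([h^{-1}o,o])}\le\tau_0$ one gets $\diam{\Pi_Y(h^{-1}[o,y])}\le\tau_0+\tilde M$;
\item since $d(h_2^{-1}o,h^{-1}y_1)=d(h_1o,y_1)\le M$ and $h^{-1}y_1\in h^{-1}[o,y]$, Lemma~\ref{TruncAdmPath} applied to the geodesic $h^{-1}[o,y]$ gives $\diam{\Pi_Y([h_2^{-1}o,h^{-1}y])}\le\tau_0+2\tilde M$;
\item Lemma~\ref{NearEndPoint} moves $h^{-1}y$ back to $o$, yielding $\diam{\Pi_Y([h_2^{-1}o,o])}\le\tau_0+3\tilde M$.
\end{enumerate}
So $C=3\tilde M$ with $\tilde M=\tilde M(D,M)$, and no closeness of $g_{i_{j-1}-1}o$ to $[o,g_{i_j-1}o]$ is ever needed. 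Your argument becomes complete once you swap the intermediate geodesic from $[o,g_{i_j-1}o]$ to $[o,go]$.
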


\begin{proof}
Let $\gamma=[o,go]$ and $I = \{i_1, \cdots, i_\alpha\}$. Recall that $g\in \mathcal{D}_M(\theta,L)$ provides a product decomposition $g=s_1\cdots s_m$ with
\[
d(g_i o,\gamma)\le M
\quad\text{and}\quad
d(g_i o,g_{i+1}o)>L,
\]
where $g_i=s_1\cdots s_i$ for $1\le i\le m$ and $g_0=1$. The proof uses only these two properties, and the condition $m \ge \theta d(o,go)$ is unnecessary here. 

Let $\tau=\tau_0+3M + 13C$. Define
\[
X_k=\mathrm{Ax}(f_{i_k}) \quad (1\le k\le \alpha).
\]
According to Definition~\ref{AdmDef}, we must verify the following for each $1\le k\le \alpha$:
\begin{enumerate}
\item $\diam{\pi_{X_k}([o,(g_{i_{k-1}}^{-1}g_{i_k})^{-1}o])}\le \tau$;
\item $\diam{\pi_{X_k}([o,(g_{i_k}^{-1}g_{i_{k+1}})o])}\le \tau$;
\item $X_k\neq p_k X_k$;
\item $g_{i_k}X_k\neq g_{i_{k+1}}X_{k+1}$ whenever $k<\alpha$.
\end{enumerate}
Here $i_0=0$ and $i_{\alpha+1}=m+1$.

\medskip
\noindent\textit{Verification of (1).}
Set $Y=X_k$, $h=g_{i_k}$, $h_1=g_{i_{k-1}}$, and $h_2=g_{i_{k-1}}^{-1}g_{i_k}$, so that $h=h_1h_2$ and the property (1) is equivalent to $\diam{\pi_Y([o,h_2^{-1}o])}\le \tau.$

By the defining properties of $f_{i_k},p_{i_k}$ in (\ref{eq:gkconjugates}), the word
\[
(h,f_{i_k},p_{i_k},f_{i_k}^{-1},g_{i_k}^{-1}g)
\]
labels an $(L_0,\tau_0)$–admissible path, which by Definition \ref{AdmDef} implies
\[
\diam{\pi_Y([h^{-1}o,o])}\le \tau_0.
\]

Since $ho,h_1o\in N_M(\gamma)$, let us choose $y,y_1\in\gamma$ with $
d(ho,y)\le M,\, d(h_1o,y_1)\le M.$
Then
\[
d(o,h^{-1}y)=d(ho,y)\le M,\qquad
d(h_2^{-1}o,h^{-1}y_1)=d(h_1o,y_1)\le M.
\]
Because $h$ acts isometrically,
\[
\diam{\pi_Y(h^{-1}[o,y])}
=\diam{\pi_Y([h^{-1}o,h^{-1}y])}.
\]
By Lemma~\ref{NearEndPoint},
\[
\proj_Y(h^{-1}o, h^{-1}y)
\le \proj_Y(h^{-1}o,o)+M+2D
\le \tau_0+M+2D.
\]

Since $o,y_1,y$ lie in this order along $\gamma$, we have $y_1\in[o,y]$, and hence
\[
d(h_2^{-1}o,h^{-1}[o,y])\le M.
\]
By Lemma~\ref{TruncAdmPath},
\[
\proj_Y(h_2^{-1}o,h^{-1}y)
\le \proj_Y(h^{-1}o,h^{-1}y)+M + 5D
\le \tau_0+ 2M + 7D.
\]
Applying Lemma~\ref{NearEndPoint} once more,
\[
\proj_Y(h_2^{-1}o,o)
\le \proj_Y(h_2^{-1}o,h^{-1}y)+M + 2D
\le \tau_0+3M + 9D.
\]
By Lemma~\ref{BigFive}(1),
\[
\diam{\pi_Y([o, h_2^{-1} o])} 
\le \proj_Y(o, h_2^{-1} o) + 4D
\le \tau_0 + 3M + 13D 
= \tau.
\]

\medskip
\noindent\textit{Verification of (2).}
This follows by applying the same argument as above after replacing $g$ with $g^{-1}$ and reversing indices; the estimates are identical.

\medskip
\noindent\textit{Verification of (3).}
By the defining property of $f_k, p_k$ in (\ref{eq:gkconjugates}),  $\mathrm{Ax}(f_k)\neq p_k\mathrm{Ax}(f_k)$ follows from the definition \ref{AdmDef} of admissible paths.

\medskip
\noindent\textit{Verification of (4).}
By the choice  of (L1)  in \textsection \ref{ChoCon}, $L>\tau$,  and
\[
d(o,g_{i_k}^{-1}g_{i_{k+1}}o)=d(g_{i_k}o,g_{i_{k+1}}o)\ge L,
\]
the claim follows immediately from Lemma~\ref{LL2ImplyBP}.
\end{proof}

    By Lemma \ref{AugPathAdm}, $\gamma_g(I)$ is $(L, \tau)$-admissible. By the choice  of (L2)  in \textsection \ref{ChoCon}, any $(L, \tau)$-admissible path has $r$-fellow travel property for some $r = r(\tau, D) > 0$. We therefore obtain the following corollary.
    \begin{cor}\label{AugPathFel}
     $\gamma_g(I)$ has $r$-fellow travel property for some $r = r(\tau_0, C, M)$.
    \end{cor}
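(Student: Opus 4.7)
The plan is to obtain this corollary as an essentially immediate consequence of the preceding Lemma \ref{AugPathAdm} combined with Proposition \ref{admisProp} on the fellow travel property of long local admissible paths. All the geometric content has been absorbed into Lemma \ref{AugPathAdm}; what remains is a bookkeeping check on how the constants propagate.

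First I would invoke Lemma \ref{AugPathAdm} to conclude that $\gamma_g(I)$ is an $(L_0, \tau_0 + C)$-admissible path, where $C = C(D, M)$ is the constant produced there. Next I would apply Proposition \ref{admisProp} with input parameter $\tau' := \tau_0 + C$: the proposition yields thresholds $L' = L'(\tau', D)$ and $r = r(\tau', D)$ such that every $(L', \tau')$-admissible path $r$-fellow travels any geodesic sharing its endpoints. The hypothesis of Proposition \ref{admisProp} on the local length is met for $\gamma_g(I)$ because the Choice of Constants in Section \ref{ChoCon} already fixed $L_0$ so large that $L_0 \geq L'$; this is precisely condition (1) of that subsection, which was set up with the present deduction in mind. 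Hence $\gamma_g(I)$ automatically $r$-fellow travels its endpoint geodesic.

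Finally, one should record the dependence of $r$ on the basic parameters. Since $r$ is a function of $\tau'$ and $D$, and $\tau' = \tau_0 + C$ with $C = C(D, M)$, one obtains $r = r(\tau_0, D, M)$, independent of the particular element $g \in A$, the subset $I \in \str$, and the confined subgroup $H$. This is the asserted form.

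There is no genuine obstacle here: the only issue worth double-checking is that the dependencies of $r$ indeed collapse to $(\tau_0, D, M)$ and do not secretly involve $g$, $I$, $L$, or $H$, and this is transparent from the chain above. The real work has already been done in Lemma \ref{AugPathAdm}, and the present corollary simply packages its output in a form convenient for the subsequent arguments.
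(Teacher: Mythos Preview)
Your proposal is correct and matches the paper's own argument essentially verbatim: the paper also derives the corollary immediately from Lemma \ref{AugPathAdm} together with Proposition \ref{admisProp}, using item (1) in the Choice of Constants to guarantee $L_0$ exceeds the threshold required for the $r$-fellow travel conclusion, and then traces the dependence $r = r(\tau_0 + C, D) = r(\tau_0, D, M)$.
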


    \subsection{Injectivity of the map $\Phi_g$}
    The goal of  this subsection is to verify the property (2) of Lemma \ref{CritGapLem}. Namely,
    \begin{lem}\label{PhiInj}
        Let $ g\in A_{\theta, L, M} =[G/H]\cap \mathcal{D}_{M}(\theta, L)$ with a product decomposition $g=s_1s_2\cdots s_m$ as above. Then $\Phi_g: \str\to Hg$ is an injective map.
    \end{lem}

    Let $I\in \str$ be a non-empty subset of $\{0,1,\cdots, m-1\}$. Given $0\le k\le m-1$, we denote 
    $$h_k := \prod_{j=0}^{k-1} (f_{j} p_{j} f_{j}^{-1})^{\epsilon_{j}}\,s_{j+1}.$$ and $Y:= h_k \ax(f_k)$.  Denote $h=\Phi_g(I)$.
   We obtain opposing bounds on $\proj_{Y}(o, h o)$ according to whether $k \in I$ or $k \notin I$, which shall be used in arriving a contradiction in the proof of  Lemma \ref{PhiInj}.

    Let $N = N(\tau, C)$ be the constant given in Corollary~\ref{ProjMidSet} for $(L, \tau)$-admissible paths.
    \begin{lem}\label{ProjHas}
        Assume $k\in I$. Then $\proj_Y(o, ho) > L - 2N.$
    \end{lem}
    \begin{proof}
        \begin{figure}[h]
            \centering
            \tikzset{every picture/.style={line width=0.75pt}} 
            
            \begin{tikzpicture}[x=0.75pt,y=0.75pt,yscale=-1,xscale=1]
            
            \draw    (44,171) -- (44.98,80.05) ;
            \draw [shift={(45,78.05)}, rotate = 90.62] [color={rgb, 255:red, 0; green, 0; blue, 0 }  ][line width=0.75]    (10.93,-3.29) .. controls (6.95,-1.4) and (3.31,-0.3) .. (0,0) .. controls (3.31,0.3) and (6.95,1.4) .. (10.93,3.29)   [fill={rgb, 255:red, 0; green, 0; blue, 0 }  ](0, 0) circle [x radius= 3.35, y radius= 3.35]   ;
            \draw [shift={(44,171)}, rotate = 0] [color={rgb, 255:red, 0; green, 0; blue, 0 }  ][fill={rgb, 255:red, 0; green, 0; blue, 0 }  ][line width=0.75]      (0, 0) circle [x radius= 3.35, y radius= 3.35]   ;
            \draw    (45,78.05) -- (118,78.05) ;
            \draw [shift={(120,78.05)}, rotate = 180] [color={rgb, 255:red, 0; green, 0; blue, 0 }  ][line width=0.75]    (10.93,-3.29) .. controls (6.95,-1.4) and (3.31,-0.3) .. (0,0) .. controls (3.31,0.3) and (6.95,1.4) .. (10.93,3.29)   [fill={rgb, 255:red, 0; green, 0; blue, 0 }  ](0, 0) circle [x radius= 3.35, y radius= 3.35]   ;
            \draw    (120,78.05) -- (120,171) ;
            \draw [shift={(119,171)}, rotate = 270.61] [color={rgb, 255:red, 0; green, 0; blue, 0 }  ][line width=0.75]    (10.93,-3.29) .. controls (6.95,-1.4) and (3.31,-0.3) .. (0,0) .. controls (3.31,0.3) and (6.95,1.4) .. (10.93,3.29)   [fill={rgb, 255:red, 0; green, 0; blue, 0 }  ](0, 0) circle [x radius= 3.35, y radius= 3.35]   ;
            \draw    (119,171) -- (187,171) ;
            \draw [shift={(189,171)}, rotate = 180] [color={rgb, 255:red, 0; green, 0; blue, 0 }  ][line width=0.75]    (10.93,-3.29) .. controls (6.95,-1.4) and (3.31,-0.3) .. (0,0) .. controls (3.31,0.3) and (6.95,1.4) .. (10.93,3.29)   [fill={rgb, 255:red, 0; green, 0; blue, 0 }  ](0, 0) circle [x radius= 3.35, y radius= 3.35]   ;
            \draw    (244,171) -- (307,171) ;
            \draw [shift={(309,171)}, rotate = 180] [color={rgb, 255:red, 0; green, 0; blue, 0 }  ][line width=0.75]    (10.93,-3.29) .. controls (6.95,-1.4) and (3.31,-0.3) .. (0,0) .. controls (3.31,0.3) and (6.95,1.4) .. (10.93,3.29)   [fill={rgb, 255:red, 0; green, 0; blue, 0 }  ](0, 0) circle [x radius= 3.35, y radius= 3.35]   ;
            \draw [shift={(244,171)}, rotate = 0] [color={rgb, 255:red, 0; green, 0; blue, 0 }  ][fill={rgb, 255:red, 0; green, 0; blue, 0 }  ][line width=0.75]      (0, 0) circle [x radius= 3.35, y radius= 3.35]   ;
            \draw    (309,171) -- (309,82.05) ;
            \draw [shift={(309,80.05)}, rotate = 90] [color={rgb, 255:red, 0; green, 0; blue, 0 }  ][line width=0.75]    (10.93,-3.29) .. controls (6.95,-1.4) and (3.31,-0.3) .. (0,0) .. controls (3.31,0.3) and (6.95,1.4) .. (10.93,3.29)   [fill={rgb, 255:red, 0; green, 0; blue, 0 }  ](0, 0) circle [x radius= 3.35, y radius= 3.35]   ;
            \draw    (309,80.05) -- (377,80.05) ;
            \draw [shift={(379,80.05)}, rotate = 180] [color={rgb, 255:red, 0; green, 0; blue, 0 }  ][line width=0.75]    (10.93,-3.29) .. controls (6.95,-1.4) and (3.31,-0.3) .. (0,0) .. controls (3.31,0.3) and (6.95,1.4) .. (10.93,3.29)         [fill={rgb, 255:red, 0; green, 0; blue, 0 }  ](0, 0) circle [x radius= 3.35, y radius= 3.35]   ;
            \draw    (379,80.05) -- (379,169.05) ;
            \draw [shift={(379,171)}, rotate = 270] [color={rgb, 255:red, 0; green, 0; blue, 0 }  ][line width=0.75]    (10.93,-3.29) .. controls (6.95,-1.4) and (3.31,-0.3) .. (0,0) .. controls (3.31,0.3) and (6.95,1.4) .. (10.93,3.29)   [fill={rgb, 255:red, 0; green, 0; blue, 0 }  ](0, 0) circle [x radius= 3.35, y radius= 3.35]   ;
            \draw    (379,171) -- (453,171) ;
            \draw [shift={(455,171)}, rotate = 180] [color={rgb, 255:red, 0; green, 0; blue, 0 }  ][line width=0.75]    (10.93,-3.29) .. controls (6.95,-1.4) and (3.31,-0.3) .. (0,0) .. controls (3.31,0.3) and (6.95,1.4) .. (10.93,3.29)   [fill={rgb, 255:red, 0; green, 0; blue, 0 }  ](0, 0) circle [x radius= 3.35, y radius= 3.35]   ;
            \draw[dashed]   (274,125.52) .. controls (274,100.41) and (289.67,80.05) .. (309,80.05) .. controls (328.33,80.05) and (344,100.41) .. (344,125.52) .. controls (344,150.64) and (328.33,171) .. (309,171) .. controls (289.67,171) and (274,150.64) .. (274,125.52) -- cycle ;
            
            \draw (17,115) node [anchor=north west][inner sep=0.75pt]   [align=left] {$\displaystyle f_{0}$};
            \draw (201,171) node [anchor=north west][inner sep=0.75pt]   [align=left] {$\displaystyle \cdots $};
            \draw (125,114) node [anchor=north west][inner sep=0.75pt]   [align=left] {$\displaystyle f_{0}^{-1}$};
            \draw (70,64) node [anchor=north west][inner sep=0.75pt]   [align=left] {$\displaystyle p_{0}$};
            \draw (139,174) node [anchor=north west][inner sep=0.75pt]   [align=left] {$\displaystyle s_{1}$};
            \draw (255,174) node [anchor=north west][inner sep=0.75pt]   [align=left] {$\displaystyle s_{k}$};
            \draw (287,116) node [anchor=north west][inner sep=0.75pt]   [align=left] {$\displaystyle f_{k}$};
            \draw (299,67) node [anchor=north west][inner sep=0.75pt]   [align=left] {$\displaystyle vo$};
            \draw (332,67) node [anchor=north west][inner sep=0.75pt]   [align=left] {$\displaystyle p_{k}$};
            \draw (380,116) node [anchor=north west][inner sep=0.75pt]   [align=left] {$\displaystyle f_{k}^{-1}$};
            \draw (248,107) node [anchor=north west][inner sep=0.75pt]   [align=left] {$\displaystyle Y_{k}$};
            \draw (37,174) node [anchor=north west][inner sep=0.75pt]   [align=left] {$\displaystyle o$};
            \draw (299,174) node [anchor=north west][inner sep=0.75pt]   [align=left] {$\displaystyle u o$};
            \draw (408,173) node [anchor=north west][inner sep=0.75pt]   [align=left] {$\displaystyle s_{k+1}$};
            \draw (448,176) node [anchor=north west][inner sep=0.75pt]   [align=left] {$\displaystyle ho$};

            \end{tikzpicture}
            \caption{Proof of Lemma \ref{ProjHas}}
            \label{fig:ProfProjHas}
        \end{figure}
        Let $u = h_k$, $v = h_k f_k$. Note that $[u o, v o]$ is the geodesic segment in the admissible path $\gamma_g(I)$ whose endpoints are in the contracting subset $Y$. 
        By Corollary \ref{AugPathFel}, there exists $r = r(\tau_0, C, M)$ such that $\gamma_g(I)$ has $r$-fellow travel property (see Definition \ref{Fellow}). 
        
        By Corollary~\ref{ProjMidSet}, there exists $N = N(\tau_0, C, M)$ such that $$\proj_Y(o, u o) < N \qquad \text{and} \qquad\proj_Y(v o, ho)<N.$$
        Since $u o, v o\in Y$, we have $$\proj_Y(u o, v o) \ge d(uo,vo) >L.$$
        By triangle inequality of $\proj_Y$, we have 
        $$\proj_Y(o, ho)
        \ge \proj_Y(uo, vo) - \proj_Y(o,uo) - \proj_Y(vo,ho)
        > L - 2N.$$ 

    \end{proof}
    We now prove the upper bound for the other case.
    \begin{lem}\label{ProjHasNo}
        Assume $k\notin I$.  Then $\proj_Y(o, h o) < 2N.$
    \end{lem}
    \begin{proof}
        \begin{figure}[h]
            \centering
            \tikzset{every picture/.style={line width=0.75pt}} 

            \tikzset{every picture/.style={line width=0.75pt}} 
            
            \begin{tikzpicture}[x=0.75pt,y=0.75pt,yscale=-1,xscale=1]
            
            \draw    (34,127.05) -- (34,82.05) ;
            \draw [shift={(34,80.05)}, rotate = 90] [color={rgb, 255:red, 0; green, 0; blue, 0 }  ][line width=0.75]    (10.93,-3.29) .. controls (6.95,-1.4) and (3.31,-0.3) .. (0,0) .. controls (3.31,0.3) and (6.95,1.4) .. (10.93,3.29)   [fill={rgb, 255:red, 0; green, 0; blue, 0 }  ] (0, 0) circle [x radius= 3.35, y radius= 3.35];
            \draw [shift={(34,125.05)}, rotate = 10.14] [color={rgb, 255:red, 0; green, 0; blue, 0 }  ][fill={rgb, 255:red, 0; green, 0; blue, 0 }  ][line width=0.75]      (0, 0) circle [x radius= 3.35, y radius= 3.35]   ;
            \draw    (34,80.05) -- (84,80.05) ;
            \draw [shift={(86,80.05)}, rotate = 180] [color={rgb, 255:red, 0; green, 0; blue, 0 }  ][line width=0.75]    (10.93,-3.29) .. controls (6.95,-1.4) and (3.31,-0.3) .. (0,0) .. controls (3.31,0.3) and (6.95,1.4) .. (10.93,3.29)   [fill={rgb, 255:red, 0; green, 0; blue, 0 }  ] (0, 0) circle [x radius= 3.35, y radius= 3.35];
            \draw    (86,80.05) -- (86,125.05) ;
            \draw [shift={(86,127.05)}, rotate = 270] [color={rgb, 255:red, 0; green, 0; blue, 0 }  ][line width=0.75]    (10.93,-3.29) .. controls (6.95,-1.4) and (3.31,-0.3) .. (0,0) .. controls (3.31,0.3) and (6.95,1.4) .. (10.93,3.29)   [fill={rgb, 255:red, 0; green, 0; blue, 0 }  ] (0, 0) circle [x radius= 3.35, y radius= 3.35];
            \draw    (86,127.05) -- (140,127.05) ;
            \draw [shift={(142,127.05)}, rotate = 180] [color={rgb, 255:red, 0; green, 0; blue, 0 }  ][line width=0.75]    (10.93,-3.29) .. controls (6.95,-1.4) and (3.31,-0.3) .. (0,0) .. controls (3.31,0.3) and (6.95,1.4) .. (10.93,3.29)   [fill={rgb, 255:red, 0; green, 0; blue, 0 }  ] (0, 0) circle [x radius= 3.35, y radius= 3.35];
            \draw    (203,127.05) -- (252,127.05) ;
            \draw [shift={(254,127.05)}, rotate = 180] [color={rgb, 255:red, 0; green, 0; blue, 0 }  ][line width=0.75]    (10.93,-3.29) .. controls (6.95,-1.4) and (3.31,-0.3) .. (0,0) .. controls (3.31,0.3) and (6.95,1.4) .. (10.93,3.29)   [fill={rgb, 255:red, 0; green, 0; blue, 0 }  ] (0, 0) circle [x radius= 3.35, y radius= 3.35];
            \draw [shift={(201,127.05)}, rotate = 10.14] [color={rgb, 255:red, 0; green, 0; blue, 0 }  ][fill={rgb, 255:red, 0; green, 0; blue, 0 }  ][line width=0.75]      (0, 0) circle [x radius= 3.35, y radius= 3.35]   ;
            \draw    (254,127.05) -- (266.6,65.01) ;
            \draw [shift={(267,63.05)}, rotate = 101.48] [color={rgb, 255:red, 0; green, 0; blue, 0 }  ][line width=0.75]    (10.93,-3.29) .. controls (6.95,-1.4) and (3.31,-0.3) .. (0,0) .. controls (3.31,0.3) and (6.95,1.4) .. (10.93,3.29)   [fill={rgb, 255:red, 0; green, 0; blue, 0 }  ] (0, 0) circle [x radius= 3.35, y radius= 3.35];
            \draw    (267,63.05) -- (316.28,55.93) ;
            \draw [shift={(318.26,55.64)}, rotate = 171.78] [color={rgb, 255:red, 0; green, 0; blue, 0 }  ][line width=0.75]    (10.93,-3.29) .. controls (6.95,-1.4) and (3.31,-0.3) .. (0,0) .. controls (3.31,0.3) and (6.95,1.4) .. (10.93,3.29)   [fill={rgb, 255:red, 0; green, 0; blue, 0 }  ] (0, 0) circle [x radius= 3.35, y radius= 3.35];
            \draw    (318.26,55.64) -- (356.83,109.42) ;
            \draw [shift={(358,111.05)}, rotate = 234.35] [color={rgb, 255:red, 0; green, 0; blue, 0 }  ][line width=0.75]    (10.93,-3.29) .. controls (6.95,-1.4) and (3.31,-0.3) .. (0,0) .. controls (3.31,0.3) and (6.95,1.4) .. (10.93,3.29)   [fill={rgb, 255:red, 0; green, 0; blue, 0 }  ] (0, 0) circle [x radius= 3.35, y radius= 3.35];
            \draw  [color={rgb, 255:red, 0; green, 0; blue, 255 }  ] (358,111.05) -- (407,103.05) ;
            \draw [shift={(407,103.05)}, rotate = 169.69] [color={rgb, 255:red, 0; green, 0; blue, 255 }  ][line width=0.75]    (10.93,-3.29) .. controls (6.95,-1.4) and (3.31,-0.3) .. (0,0) .. controls (3.31,0.3) and (6.95,1.4) .. (10.93,3.29)   [fill={rgb, 255:red, 0; green, 0; blue, 255 }  ] (0, 0) circle [x radius= 3.35, y radius= 3.35];
            \draw   [color={rgb, 255:red, 0; green, 0; blue, 255 }  ] (407,103.05) -- (400.05,54.96) ;
            \draw [shift={(399.77,52.98)}, rotate = 81.78] [color={rgb, 255:red, 0; green, 0; blue, 255 }  ][line width=0.75]    (10.93,-3.29) .. controls (6.95,-1.4) and (3.31,-0.3) .. (0,0) .. controls (3.31,0.3) and (6.95,1.4) .. (10.93,3.29)   [fill={rgb, 255:red, 0; green, 0; blue, 255 }  ] (0, 0) circle [x radius= 3.35, y radius= 3.35];
            \draw   [color={rgb, 255:red, 0; green, 0; blue, 255 }  ] (399.77,52.98) -- (449.05,45.86) ;
            \draw [shift={(451.03,45.58)}, rotate = 171.78] [color={rgb, 255:red, 0; green, 0; blue, 255 }  ][line width=0.75]    (10.93,-3.29) .. controls (6.95,-1.4) and (3.31,-0.3) .. (0,0) .. controls (3.31,0.3) and (6.95,1.4) .. (10.93,3.29)   [fill={rgb, 255:red, 0; green, 0; blue, 255 }  ] (0, 0) circle [x radius= 3.35, y radius= 3.35];
            \draw   [color={rgb, 255:red, 0; green, 0; blue, 255 }  ] (451.03,45.58) -- (461.98,92.66) ;
            \draw [shift={(462.26,94.64)}, rotate = 257.78] [color={rgb, 255:red, 0; green, 0; blue, 255 }  ][line width=0.75]    (10.93,-3.29) .. controls (6.95,-1.4) and (3.31,-0.3) .. (0,0) .. controls (3.31,0.3) and (6.95,1.4) .. (10.93,3.29)   [fill={rgb, 255:red, 0; green, 0; blue, 255 }  ] (0, 0) circle [x radius= 3.35, y radius= 3.35];
            \draw   [color={rgb, 255:red, 0; green, 0; blue, 255 }  ] (517.26,83.64) -- (568.03,74.4) ;
            \draw [shift={(570,74.05)}, rotate = 169.69] [color={rgb, 255:red, 0; green, 0; blue, 255 }  ][line width=0.75]    (10.93,-3.29) .. controls (6.95,-1.4) and (3.31,-0.3) .. (0,0) .. controls (3.31,0.3) and (6.95,1.4) .. (10.93,3.29)   [fill={rgb, 255:red, 0; green, 0; blue, 255 }  ] (0, 0) circle [x radius= 3.35, y radius= 3.35];
            \draw [shift={(515.29,83.99)}, rotate = 10.14] [color={rgb, 255:red, 0; green, 0; blue, 255 }  ][fill={rgb, 255:red, 0; green, 0; blue, 255 }  ][line width=0.75]      (0, 0) circle [x radius= 3.35, y radius= 3.35]   ;
            \draw   [color={rgb, 255:red, 255; green, 0; blue, 0 }  ] (254,127.05) -- (295.57,167.65) ;
            \draw [shift={(297,169.05)}, rotate = 224.33] [color={rgb, 255:red, 255; green, 0; blue, 0 }  ][line width=0.75]    (10.93,-3.29) .. controls (6.95,-1.4) and (3.31,-0.3) .. (0,0) .. controls (3.31,0.3) and (6.95,1.4) .. (10.93,3.29)   [fill={rgb, 255:red, 255; green, 0; blue, 0 }  ] (0, 0) circle [x radius= 3.35, y radius= 3.35];
            \draw   [color={rgb, 255:red, 255; green, 0; blue, 0 }  ] (297,169.05) -- (326.6,140.58) ;
            \draw [shift={(328.04,139.19)}, rotate = 136.12] [color={rgb, 255:red, 255; green, 0; blue, 0 }  ][line width=0.75]    (10.93,-3.29) .. controls (6.95,-1.4) and (3.31,-0.3) .. (0,0) .. controls (3.31,0.3) and (6.95,1.4) .. (10.93,3.29)   [fill={rgb, 255:red, 255; green, 0; blue, 0 }  ] (0, 0) circle [x radius= 3.35, y radius= 3.35];
            \draw   [color={rgb, 255:red, 255; green, 0; blue, 0 }  ] (328.04,139.19) -- (359.67,174.56) ;
            \draw [shift={(361,176.05)}, rotate = 228.2] [color={rgb, 255:red, 255; green, 0; blue, 0 }  ][line width=0.75]    (10.93,-3.29) .. controls (6.95,-1.4) and (3.31,-0.3) .. (0,0) .. controls (3.31,0.3) and (6.95,1.4) .. (10.93,3.29)   [fill={rgb, 255:red, 255; green, 0; blue, 0 }  ] (0, 0) circle [x radius= 3.35, y radius= 3.35];
            \draw   [color={rgb, 255:red, 255; green, 0; blue, 0 }  ] (361,176.05) -- (331.37,207.59) ;
            \draw [shift={(330,209.05)}, rotate = 313.21] [color={rgb, 255:red, 255; green, 0; blue, 0 }  ][line width=0.75]    (10.93,-3.29) .. controls (6.95,-1.4) and (3.31,-0.3) .. (0,0) .. controls (3.31,0.3) and (6.95,1.4) .. (10.93,3.29)   [fill={rgb, 255:red, 255; green, 0; blue, 0 }  ] (0, 0) circle [x radius= 3.35, y radius= 3.35];
            \draw    [color={rgb, 255:red, 255; green, 0; blue, 0 }  ](370.86,237.87) -- (414.49,275.74) ;
            \draw [shift={(416,277.05)}, rotate = 220.96] [color={rgb, 255:red, 255; green, 0; blue, 0 }  ][line width=0.75]    (10.93,-3.29) .. controls (6.95,-1.4) and (3.31,-0.3) .. (0,0) .. controls (3.31,0.3) and (6.95,1.4) .. (10.93,3.29)   [fill={rgb, 255:red, 255; green, 0; blue, 0 }  ] (0, 0) circle [x radius= 3.35, y radius= 3.35];
            \draw [shift={(369.35,236.56)}, rotate = 10.14] [color={rgb, 255:red, 255; green, 0; blue, 0 }  ][fill={rgb, 255:red, 255; green, 0; blue, 0 }  ][line width=0.75]      (0, 0) circle [x radius= 3.35, y radius= 3.35]   ;
            \draw[dashed]  [color={rgb, 255:red, 0; green, 0; blue, 255 }  ] (351.57,62.96) .. controls (350.63,54.31) and (356.89,46.53) .. (365.54,45.59) -- (551.85,25.42) .. controls (560.5,24.48) and (568.28,30.74) .. (569.22,39.4) -- (574.31,86.41) .. controls (575.24,95.07) and (568.99,102.84) .. (560.33,103.78) -- (374.03,123.95) .. controls (365.37,124.89) and (357.6,118.63) .. (356.66,109.98) -- cycle ;
            \draw[dashed]  [color={rgb, 255:red, 255; green, 0; blue, 0 }  ] (274.78,104.5) .. controls (280.41,97.86) and (290.36,97.04) .. (297,102.67) -- (442.96,226.37) .. controls (449.6,232) and (450.43,241.95) .. (444.8,248.59) -- (414.22,284.67) .. controls (408.59,291.31) and (398.65,292.13) .. (392,286.5) -- (246.04,162.8) .. controls (239.4,157.17) and (238.58,147.23) .. (244.21,140.58) -- cycle ;
            \draw    (202,156.05) -- (241.24,134.99) ;
            \draw [shift={(243,134.05)}, rotate = 151.78] [color={rgb, 255:red, 0; green, 0; blue, 0 }  ][line width=0.75]    (10.93,-3.29) .. controls (6.95,-1.4) and (3.31,-0.3) .. (0,0) .. controls (3.31,0.3) and (6.95,1.4) .. (10.93,3.29)   ;
            \draw    (381,173.05) .. controls (441.09,165.17) and (447.81,143.7) .. (464.24,115.35) ;
            \draw [shift={(465,114.05)}, rotate = 120.38] [color={rgb, 255:red, 0; green, 0; blue, 0 }  ][line width=0.75]    (10.93,-3.29) .. controls (6.95,-1.4) and (3.31,-0.3) .. (0,0) .. controls (3.31,0.3) and (6.95,1.4) .. (10.93,3.29)   ;
            \draw[dashed]  [color={rgb, 255:red, 255; green, 0; blue, 0 }  ] (243.34,91.85) .. controls (246.64,74.13) and (256.76,61.14) .. (265.94,62.85) .. controls (275.13,64.56) and (279.9,80.32) .. (276.6,98.04) .. controls (273.31,115.77) and (263.19,128.76) .. (254,127.05) .. controls (244.81,125.34) and (240.04,109.58) .. (243.34,91.85) -- cycle ;
            \draw[dashed]  [color={rgb, 255:red, 0; green, 0; blue, 255 }  ] (322.9,92.67) .. controls (312.6,77.46) and (310.53,60.88) .. (318.26,55.64) .. controls (326,50.4) and (340.62,58.49) .. (350.92,73.7) .. controls (361.22,88.91) and (363.29,105.49) .. (355.56,110.72) .. controls (347.82,115.96) and (333.2,107.88) .. (322.9,92.67) -- cycle ;
            \draw    (282,91.05) .. controls (296.4,92.97) and (300.66,93.04) .. (309.83,82.43) ;
            \draw [shift={(311,81.05)}, rotate = 129.81] [color={rgb, 255:red, 0; green, 0; blue, 0 }  ][line width=0.75]    (10.93,-3.29) .. controls (6.95,-1.4) and (3.31,-0.3) .. (0,0) .. controls (3.31,0.3) and (6.95,1.4) .. (10.93,3.29)   ;
            \draw    (351,102.05) -- (363,99.05) ;
            \draw    (363,99.05) -- (367,109.05) ;
            \draw    (266,124.05) -- (262,134.05) ;
            \draw    (256,119) -- (266,124.05) ;
            
            \draw (25,128.4) node [anchor=north west][inner sep=0.75pt]    {$o$};
            \draw (15,94.4) node [anchor=north west][inner sep=0.75pt]    {$f_{0}$};
            \draw (47,59.4) node [anchor=north west][inner sep=0.75pt]    {$p_{0}$};
            \draw (86,89.4) node [anchor=north west][inner sep=0.75pt]    {$f_{0}^{-1}$};
            \draw (99,131.4) node [anchor=north west][inner sep=0.75pt]    {$s_{1}$};
            \draw (207,109.45) node [anchor=north west][inner sep=0.75pt]    {$s_{k}$};
            \draw (179,154.4) node [anchor=north west][inner sep=0.75pt]    {$h_{k} o$};
            \draw (259,82.4) node [anchor=north west][inner sep=0.75pt]    {$f_{k}$};
            \draw (279,43.4) node [anchor=north west][inner sep=0.75pt]    {$p_{k}$};
            \draw (322,81.4) node [anchor=north west][inner sep=0.75pt]    {$f_{k}^{-1}$};
            \draw (372,65.4) node [anchor=north west][inner sep=0.75pt]    {$f_{k+1}$};
            \draw (412,52.4) node [anchor=north west][inner sep=0.75pt]    {$p_{k+1}$};
            \draw (378,107.4) node [anchor=north west][inner sep=0.75pt]    {$s_{k+1}$};
            \draw (480.08,87.51) node [anchor=north west][inner sep=0.75pt]    {$\cdots $};
            \draw (456,56.4) node [anchor=north west][inner sep=0.75pt]    {$f_{k+1}^{-1}$};
            \draw (530,65.4) node [anchor=north west][inner sep=0.75pt]    {$s_{m}$};
            \draw (579,60.4) node [anchor=north west][inner sep=0.75pt]    {$\Phi _{g}( I') o$};
            \draw (425,273.4) node [anchor=north west][inner sep=0.75pt]    {$\Phi _{g}( I) o$};
            \draw (343.04,217.96) node [anchor=north west][inner sep=0.75pt]  [rotate=-39.1]  {$\cdots $};
            \draw (265.49,140.27) node [anchor=north west][inner sep=0.75pt]  [rotate=-45]  {$s_{k+1}$};
            \draw (301.51,125.4) node [anchor=north west][inner sep=0.75pt]  [rotate=-45]  {$f_{k+1}$};
            \draw (334.69,148.71) node [anchor=north west][inner sep=0.75pt]  [rotate=-45]  {$p_{k+1}$};
            \draw (360.09,179.88) node [anchor=north west][inner sep=0.75pt]  [rotate=-45]  {$f_{k+1}^{-1}$};
            \draw (395.77,233.3) node [anchor=north west][inner sep=0.75pt]  [rotate=-40.09]  {$s_{m}$};
            \draw (437,151) node [anchor=north west][inner sep=0.75pt]   [align=left] {action of $\displaystyle h_kf_{k} p_{k} f_{k}^{-1} h_k^{-1} \ $};
            \draw (205,58.4) node [anchor=north west][inner sep=0.75pt]    {$Y=Y_{k}$};
            \draw (330,39.4) node [anchor=north west][inner sep=0.75pt]    {$Y'$};
            \draw (163,124.4) node [anchor=north west][inner sep=0.75pt]    {$\cdots $};

            \end{tikzpicture}
                \caption{Proof of Lemma \ref{ProjHasNo}}
            \label{fig:ProfProjHasNo}
        \end{figure}
        To achieve the bound, we consider the element $h':=\Phi_g(I')$ for $I': = I\cup \{k\}$ and its associated  admissible path  $\gamma': = \gamma_g(I')$. Recalling 
        $$h=\Phi_g(I) = \prod_{j=0}^{m-1} (f_j p_j f_j^{-1})^{\epsilon_j} s_{j+1}\,\text{ and }\, h_k = \prod_{j=0}^{k-1} (f_{j} p_{j} f_{j}^{-1})^{\epsilon_{j}} s_{j+1}$$ we have $$\begin{aligned}
        h'=\Phi_g(I') &= \left(\prod_{j=0}^{k-1} (f_j p_j f_j^{-1})^{\epsilon_j} s_{j+1}\right) \left(f_k p_k f_k^{-1}\right) s_{k+1} \left(\prod_{j=k+1}^{m-1} (f_j p_j f_j^{-1})^{\epsilon_j} s_{j+1}\right) \\
        &\\
        &= h_k f_k p_k f_k^{-1} h_k^{-1}\Phi_g(I).    
        \end{aligned}$$
        Denote $Y' = h_k f_k p_k f_k^{-1} \ax(f_k)$. Then $Y' = h_k f_k p_k f_k^{-1} h_k^{-1} Y$. See Figure \ref{fig:ProfProjHasNo}.
        
        
        By construction, $[h_k o , h_k f_k o]$ is the geodesic segment of the $(L,\tau)$-admissible path   $\gamma_g(I')$ with endpoints in the contracting subset $Y$ corresponding to $f_k$ (Definition \ref{AdmDef}). By Corollary \ref{ProjMidSet}, there exists $N = N(\tau_0, C, M) > 0$ such that $$\proj_Y(o, h_k o) < N.$$

        Similarly,  $[h_k f_k p_k o, h_k f_k p_k f_k^{-1} o]$ is the geodesic segment  with endpoints in the contracting subset $Y'$ corresponding to $f_k^{-1}$. Again by Corollary \ref{ProjMidSet}, $$\proj_{Y'}(h_k f_k p_k f_k^{-1} o, h' o) < N.$$
        Noting that $h' = h_k f_k p_k f_k^{-1} h_k^{-1} h$ and $Y' = h_k f_k p_k f_k^{-1} h_k^{-1} Y$, by the isometric action of $h_k f_k p_k^{-1} f_k^{-1} h_k^{-1}$ we derive $$\proj_Y(h_ko, ho) < N.$$ By triangle inequality of $\proj_Y$, we have $$\proj_Y(o, h o) \le \proj_Y(o, h_k o) + \proj_Y(h_k o, h o) < 2N$$
        completing the proof.
    \end{proof}
    With the two lemmas as above, we complete the proof of the injectivity of $\Phi_g$.

    \begin{proof}[Proof of Lemma \ref{PhiInj}]

    Towards contradiction, let $I\ne I'\in \str$ so that $\Phi_g(I) = \Phi_g(I')=:h$. Let $k$ be the smallest integer that lies in exactly one of $I$ and $I'$. Then $k$ is the first position where the map $\Phi_g$ inserts $(f_{k} p_{k} f_{k}^{-1})$ differently for $I$ and $I'$. 
    
    Without loss of generality, assume $k \in I$ and $k\not\in I'$. We identify $(\epsilon_0, \cdots, \epsilon_{k-1})$ with $I\cap[0,k-1] = I'\cap [0,k-1]$ and denote
    $$h_k = \prod_{j=0}^{k-1} (f_{j} p_{j} f_{j}^{-1})^{\epsilon_{j}} s_{j+1}.$$
    and $Y:= h_k \ax(f_k)$. 
    
        
On the one hand, since $k\in I$, Lemma~\ref{ProjHas} gives
\[
\proj_Y(o,ho)=\proj_Y\!\bigl(o,\Phi_g(I)o\bigr)>L-2N.
\]
On the other hand, since $k\notin I'$, Lemma~\ref{ProjHasNo} yields
\[
\proj_Y(o,ho)=\proj_Y\!\bigl(o,\Phi_g(I')o\bigr)<2N.
\]
This contradicts the assumption that $h=\Phi_g(I)=\Phi_g(I')$, since $L>4N$ by the choice of (L3) in \textsection \ref{ChoCon}. Hence $\Phi_g$ is injective.
    \end{proof}

    \subsection{Completion of  the proofs of  Theorem \ref{AGroTig} and Theorem \ref{MainThm}}
    \begin{proof}[Proof of Theorem \ref{AGroTig}]
To achieve the gap, it suffices to verify the three conditions in Lemma~\ref{CritGapLem} for $A:=A_{\theta,L,M}$.

Let $g\in A$. Since $g\in\mathcal{D}_M(\theta,L)$, it admits a  $6M$–almost geodesic decomposition $g=s_1\cdots s_m$ with $m\ge \theta d(o,go)$ by Lemma~\ref{lem:QuasiConvex->AlmostGeodesic}. This proves the condition (1).

Let $\mathcal{F}=\{fpf^{-1}:f\in F,\ p\in P\}$, which is finite. By Remark~\ref{rem:alterDef},
\[
\Phi_g(I)=\prod_{i=0}^{m-1}(f_ip_if_i^{-1})^{\epsilon_i}s_{i+1}
\]
with letters in $\mathcal{F}$, and Lemma~\ref{PhiInj} shows $\Phi_g$ is injective, giving (2).

Finally, (3) follows from Lemma~\ref{MaptoCoset}. Hence, $\omega_G >\omega_A$ and the ``Moreover" statement follow by Lemma~\ref{CritGapLem}.
\end{proof}

    We now  give the proof of Theorem \ref{MainThm} (restated below) using Theorem \ref{AGroTig}. 
    \begin{thm}
        Suppose $G\act X$ is a proper SCC action with contracting elements.  Let  $P$ be a finite non-degenerate subset (i.e. $P\cap E(G)=\emptyset$). Then there exists a constant $\omega_0<\omega_G$ with the following property. If  $H<G$ is a confined subgroup  with $P$ as confining subset, then $\omega_0 > \omega_{G/H}$. 
    \end{thm}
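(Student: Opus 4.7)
The plan is to combine the two main ingredients already assembled in the preceding sections: the growth tightness of the admissible part $A_{\theta,L,M}$ proved in Theorem \ref{AGroTig}, together with the growth tightness of the non-admissible part provided by Lemma \ref{OutGrowTight} from the SCC assumption. The first step will be to pick $[G/H]\subset G$ to be a section consisting of shortest representatives, i.e.\ one $g\in Hg$ for each coset with $d(o,go)=d(o,Hgo)$. For such a section one has $|B_{G/H}(n)|=|[G/H]\cap B(n)|$, so $\omega_{[G/H]}=\omega_{G/H}$, and the problem is reduced to bounding the growth rate of this subset $A:=[G/H]$ of $G$.

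Next, I would apply Lemma \ref{OutGrowTight} (which uses the SCC hypothesis via Corollary \ref{OGrowthTight}) to obtain a constant $M>0$, and then pick $\theta,L>0$ provided by the same lemma and by Theorem \ref{AGroTig} so that both of the following hold simultaneously: the set $A\setminus A_{\theta,L,M}$ is growth tight with growth rate strictly less than $\omega_G$, and $A_{\theta,L,M}$ is growth tight with a gap to $\omega_G$ depending only on $\theta, M$ and $P$. The first gap is a geometric feature of the SCC action $G\act X$ and is a priori independent of the subgroup $H$, since the non-admissible part sits inside $\mathcal{O}_M(\beta,L)\cup B(N)$ by the argument of Lemma \ref{OutGrowTight}. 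The second gap is the ``moreover'' assertion of Theorem \ref{AGroTig}, which is the crucial uniformity statement: the gap $\omega_G-\omega_{A_{\theta,L,M}}$ depends only on $\theta, M, P$, not on the individual confined subgroup $H$.

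Setting
\[
\omega_0:=\max\bigl\{\omega_{A_{\theta,L,M}},\ \omega_{A\setminus A_{\theta,L,M}}\bigr\}
\]
then gives $\omega_0<\omega_G$, and since $A=A_{\theta,L,M}\sqcup(A\setminus A_{\theta,L,M})$ one concludes
\[
\omega_{G/H}=\omega_A\le \omega_0<\omega_G.
\]
The key observation for uniformity in $H$ is that neither constant in the maximum depends on $H$: the first is purely geometric, and the second is controlled only by $\theta, M, P$ (the same confining set $P$ is used throughout). The main technical obstacle lies entirely in the proof of Theorem \ref{AGroTig}; once that uniform gap is in hand, the present deduction is a clean combination of two growth tight subsets. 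A minor subtlety worth verifying carefully is that the choice of ``shortest representative'' section $A$ does not spoil the hypotheses of Theorem \ref{AGroTig}, but since that theorem applies to any section $[G/H]$, no issue arises.
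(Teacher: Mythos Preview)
Your proposal is correct and follows essentially the same approach as the paper: take a shortest-representative section $A=[G/H]$, use Lemma~\ref{OutGrowTight} (from the SCC hypothesis) to make the non-admissible part $A\setminus A_{\theta,L,M}$ growth tight, use Theorem~\ref{AGroTig} to make the admissible part $A_{\theta,L,M}$ growth tight with a gap depending only on $\theta,M,P$, and conclude by taking the maximum. Your treatment of the order of choosing constants and of the uniformity in $H$ is in fact more explicit than the paper's own short deduction.
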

    \begin{proof}
        Let $[G/H]$ denote a complete set of right coset representatives of $H$ in $G$.

Let $M$ be as in Lemma~\ref{OutGrowTight}. By Theorem~\ref{AGroTig}, there exists $L=L(\tau_0,C,M)$ such that $A_{\theta,L,M}=[G/H]\cap \mathcal{D}_M(\theta,L)$
is growth tight for every $\theta\in (0,1]$.

By Lemma~\ref{OutGrowTight}, there exists $\theta>0$ such that $G\setminus \mathcal{D}_M(\theta,L)$ is growth tight.

Observe that
\[
[G/H]\subseteq A_{\theta,L,M}\cup\bigl(G\setminus \mathcal{D}_M(\theta,L)\bigr).
\]
Since both sets on the right-hand side are growth tight, it follows that $[G/H]$ is growth tight.

Finally, note that $\tau_0$ depends only on $P$, while $C$ and $M$ depend only on the action of $G$ on $X$. Hence $L$, $\theta$, and the resulting growth gap depend only on $P$.
   \end{proof}
     
\appendix
\section{Confined subgroups in Free groups (by Lihuang Ding and Kairui Liu)}

    We give two different proofs to the main theorem for confined subgroup in free groups. 

    Let $T_d$ be the regular tree of valence $d\ge 3$.
    \begin{thm}
    Let $\Gamma$ be a $d$-regular graph (allowing loops and multiple edges). Assume that there exists a finite number $R$ such that the following holds: For any subgraph of $\Gamma$ that is isometric to a subgraph of $T_{d}$, this subgraph has diameter at most $R$.
    
    Then $\omega_\Gamma<\log (d-1)$. \end{thm}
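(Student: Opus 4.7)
The strategy is to use the universal covering map $\pi \colon T_d \to \Gamma$ — well defined since $\Gamma$ is $d$-regular — and to leverage the bounded-diameter hypothesis to force a quantitative collision at every $L$-step of a non-backtracking walk. Fix $L := \lfloor R/2 \rfloor + 1$, and for any vertex $w \in \Gamma$ equipped with a chosen incoming directed edge $e$, consider the $(d-1)$-branching forward subtree $\Sigma_w^e \subset T_d$ of depth $L$ rooted at a lift of $w$: it is built by walking out of that lift while forbidding the initial direction $\bar e$ and all subsequent backtracks. I claim that $\pi|_{\Sigma_w^e}$ cannot be injective. Indeed, if it were, then $\pi(\Sigma_w^e)$ would be a tree subgraph of $\Gamma$ isomorphic to $\Sigma_w^e$, hence a subgraph of $\Gamma$ isometric (via its intrinsic tree metric) to the subgraph $\Sigma_w^e$ of $T_d$, with intrinsic diameter $2L > R$, contradicting the hypothesis. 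Therefore $\pi$ identifies at least two distinct vertices of $\Sigma_w^e$; after replacing $L$ by a uniformly inflated constant $L' = L'(R,d)$ (whose extra depth lets one propagate any internal identification forward into a genuine leaf collision), one obtains
\[
\#\{\text{distinct endpoints of length-}L'\text{ forward walks from }w\} \le (d-1)^{L'} - 1.
\]

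Next, fix a basepoint $v \in \Gamma$. For each $w \in S_\Gamma(v,n)$ choose a geodesic $v \to w$ with last directed edge $e_w$, and note that any element of $S_\Gamma(v, n + L')$ arises as the endpoint of some length-$L'$ non-backtracking forward extension from such a $w$. Combined with the previous step, this gives the recursion
\[
|S_\Gamma(v, n + L')| \le |S_\Gamma(v, n)| \cdot \bigl((d-1)^{L'} - 1\bigr),
\]
so $|S_\Gamma(v, k L')| \le \bigl((d-1)^{L'} - 1\bigr)^{k}$ by iteration. Summing yields $|B_\Gamma(v, k L')| = O\bigl(((d-1)^{L'} - 1)^{k}\bigr)$, and therefore
\[
\omega_\Gamma \le \frac{1}{L'}\log\bigl((d-1)^{L'} - 1\bigr) < \log(d-1).
\]

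The delicate point I expect to require the most care is ensuring the collision produced in the first step is visible at the leaves of the forward subtree, as the recursion requires, rather than at an internal pair: the hypothesis directly rules out injectivity of $\pi|_{\Sigma_w^e}$, but a priori the identified pair might be internal nodes. I plan to handle this by inflating $L$ to $L'$ by a bounded factor depending only on $R$ and $d$, extending both colliding branches the remaining distance forward, and re-applying the tree-diameter argument to produce honest colliding leaves. Once this bookkeeping is in place, the iteration and the strict inequality $\frac{1}{L'}\log((d-1)^{L'}-1) < \log(d-1)$ finish the argument.
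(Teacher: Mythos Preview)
Your approach is essentially the paper's: bound spheres via the recursion $|S_{n+L'}|\le |S_n|\cdot\bigl((d-1)^{L'}-1\bigr)$ and iterate. The paper's proof is in fact terser than yours and does not explicitly address the leaf-versus-internal collision issue you flag.

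On that delicate point: your inflation idea can be made to work, but the cleanest route is not to chase leaf collisions through the covering map at all. Instead, argue directly on shadows: with $Sh(w,k)=\{u:d(v,u)=d(v,w)+k,\ d(w,u)=k\}$ one has $|Sh(w,k)|\le (d-1)\,|Sh(w,k-1)|$, so $|Sh(w,2L)|\le(d-1)^{2L}$; and if equality held, the union $\bigcup_{k\le 2L}Sh(w,k)$ would be a full $(d-1)$-ary tree of depth $2L$, which contains (centred at any depth-$L$ vertex) an honest copy of the radius-$L$ ball of $T_d$, contradicting the hypothesis. This gives $|Sh(w,2L)|\le (d-1)^{2L}-1$ with no covering-space bookkeeping, and feeds directly into your recursion. (This is exactly the mechanism in the paper's alternative proof in the appendix.) If you prefer to keep the covering picture, the concrete way to manufacture a leaf collision from a short cycle $C$ near $w$ is to take a forward path $P$ from $w$ to $C$ and compare $P\cdot C\cdot S$ with $P\cdot \bar C\cdot S$ for a suitable tail $S$; both are forward non-backtracking of the same length with the same endpoint, so $L'=|P|+|C|+|S|\le 3L$ suffices after a small case check on how $P$ meets $C$.
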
 
    \begin{proof}
    Fix a basepoint $o\in \Gamma$. Consider the $n$-sphere $S_n:=\{v\in \Gamma: d(o,v)=n\}$ for $n\ge 1$.  Observe that any ball of radius $R+1$ in $\Gamma$ contains an embedded loop, so $$|S_{2R}|\le D:=(d-1)^{2R}-1.$$ Thus, $$|S_{n+2R}|\le |S_n|\cdot |S_{2R}|\le D|S_n|$$ and by induction, $|S_{2nR}|\le D^n$. There exists a constant $c>0$ depending on $R$ so that $|S_{n}|\le cD^{n/2R}$ for $n\ge 1$. Taking the limit shows $$\limsup_{n} \log |S_{n}|/{n}\le \frac{\log D}{2R}<\log (d-1)$$ The proof is complete.  
    \end{proof}

    \begin{cor}\label{ThmFree}
    The Schreier graphs associated to confined subgroups in free groups are growth tight.    
    \end{cor}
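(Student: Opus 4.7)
The plan is to apply the preceding theorem to the Schreier graph $\Gamma = H \backslash F_r$, a $2r$-regular graph (with possible loops and multiple edges) whose growth rate equals the quotient growth rate $\omega_{F_r/H}$. Since $\omega_{F_r} = \log(2r-1)$, the conclusion $\omega_\Gamma < \log(2r - 1)$ of the preceding theorem is precisely the desired growth tightness.

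First, I would translate confinedness into a local geometric property of $\Gamma$. Setting $L := \max\{|p|_S : p \in P\}$, the condition $g^{-1}Hg \cap P \setminus \{1\} \ne \emptyset$ for every $g \in F_r$ says that at each vertex $Hg \in \Gamma$ there is a non-trivial $p \in P$ which, read off as a reduced word from $Hg$, traces a non-trivial closed walk of length at most $L$. Because $p$ is reduced in $F_r$, this walk is non-backtracking, and because no tree admits a non-trivial non-backtracking closed walk (any excursion into a branch must reverse at a leaf), the induced subgraph on the walk fails to be a tree. Consequently every ball $B_\Gamma(v, L)$ contains an embedded cycle, so the injectivity radius of $\Gamma$ is uniformly bounded by $L$. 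This is already the geometric input exploited in the proof of the preceding theorem.

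Next, to match the stated hypothesis of the preceding theorem, I would show that any subgraph $\Sigma \subset \Gamma$ isometric to a subgraph of $T_{2r}$ has diameter bounded by some $R = R(L, r)$. After restricting to a connected component, $\Sigma$ is a tree, and the isometric embedding lifts it to a convex subtree $\widetilde{\Sigma} \subset T_{2r}$ satisfying $d_{\widetilde{\Sigma}}(u, v) = d_T(u, v)$, together with the displacement inequality $d_T(u, hv) \ge d_T(u, v)$ for all $u, v \in \widetilde{\Sigma}$ and $h \in H$. I would complete the bound via a tree-branching analysis at a deep interior vertex $w$ of a diameter-realizing geodesic $\gamma \subset \widetilde{\Sigma}$, combining the uniform displacement bound $d_T(w, Hw \setminus \{w\}) \le L$ from confinedness with the convexity of $\widetilde{\Sigma}$ and the freeness of the $F_r$-action on $T_{2r}$ (so that $hw \notin \widetilde{\Sigma}$, lest $w$ and $hw$ give two preimages of one vertex of $\Sigma$).

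The main obstacle I anticipate is this branching analysis: the displacement inequality is automatically satisfied when $hw$ lies in a side-branch not along $\gamma$, so the contradiction must be extracted by applying confinedness simultaneously at many consecutive vertices of $\gamma$ and pigeonholing among the finitely many admissible side-branches at each of them in $T_{2r}$. Once $\mathrm{diam}(\Sigma) \le R$ is established, the preceding theorem delivers $\omega_\Gamma < \log(2r - 1) = \omega_{F_r}$, which is exactly growth tightness of $\Gamma$.
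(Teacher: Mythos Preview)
Your first paragraph is correct and is the entire argument: confinedness produces a non-backtracking closed walk of bounded length at every vertex of $\Gamma$, hence an embedded cycle in every ball of radius $L$. You yourself note that this is exactly the input used in the proof of the preceding theorem (``any ball of radius $R+1$ in $\Gamma$ contains an embedded loop''). The paper's own argument is even more compressed: it cites only the correspondence between rooted $d$-regular graphs and rooted Schreier graphs and leaves the translation of confinedness into bounded injectivity radius implicit.

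Your second and third paragraphs, however, chase a phantom. Under the reading you adopt --- $\Sigma\subset\Gamma$ equipped with the metric induced from $\Gamma$, isometric to a subtree of $T_{2r}$ --- the hypothesis of the preceding theorem is \emph{never} satisfied when $\Gamma$ is infinite: any geodesic segment of length $n$ in $\Gamma$ is such a $\Sigma$, of diameter $n$. (The same holds for the intrinsic-metric reading, via embedded paths.) So the diameter bound you hope to extract by the branching and pigeonhole analysis is simply false, and the ``obstacle'' you anticipate is an actual impossibility rather than a technical hurdle. The hypothesis of the theorem is phrased loosely; what its proof actually requires is the bounded-injectivity-radius condition you have already established in your first paragraph. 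Drop the last two paragraphs and conclude directly.
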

    \begin{proof}
    It is well-known that the space of rooted $d$-regular graphs is isomorphic to the space of rooted Schreier graphs associated to subgroups. The injectivity radius of the Schreier graph associated to a confined subgroup is bounded everywhere. We refer to \cite[Section 2]{Can20} for relevant discussion.   
    \end{proof}
    Here is an alternative   proof of the main theorem. 

    Let $B_n(r)$ be the ball of radius $r$ in the infinite valance-$(2n)$ tree for $n,r\ge 1$. Let $\Gamma$ be a graph and fix a basepoint of $\Gamma$. Define $Sh(a, n) = \{b\in \Gamma: d(e, b) = d(e, a) + d(a, b), d(a, b) = n\}$ for any $a\in \Gamma$, $n\ge 0$. Then $Sh(a, 0) = \{a\}$.
    \begin{lem}
         Let $n, m\ge 1$. Let $\Gamma$ be a graph with degree of vertices at most $2n$ and no subgraph isomorphic to $B_n(m)$. Then $|Sh(a, 2m)|\le (2n-1)^{2m} -1$ for any $a\in \Gamma$, $a\neq e$.
    \end{lem}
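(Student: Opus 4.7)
The plan is to prove the contrapositive: if $|Sh(a, 2m)| \geq (2n-1)^{2m}$, then $\Gamma$ contains a subgraph isomorphic to $B_n(m)$. For $0 \leq k \leq 2m$, set
\[
S_k \;:=\; \{v \in \Gamma : d(a,v) = k,\ d(e,v) = d(e,a)+k\},
\]
so that $S_0 = \{a\}$ and $S_{2m} = Sh(a, 2m)$. Any $v \in S_k$ has at least one neighbor at distance $d(e,v)-1$ from $e$ (using that $v \neq e$), so among its at most $2n$ neighbors, at most $2n-1$ can lie in $S_{k+1}$. Hence $|S_{k+1}| \leq (2n-1)|S_k|$, and from $|S_0|=1$ together with the hypothesis $|S_{2m}| \geq (2n-1)^{2m}$, equality propagates at every level: $|S_k| = (2n-1)^k$ for all $k$.

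Next I would derive a rigid tree structure from this equality. Exactness at every step forces each $v \in S_k$ with $k < 2m$ to have exactly $2n-1$ forward neighbors in $S_{k+1}$, and it forces each $w \in S_{k+1}$ to have exactly one neighbor in $S_k$ (otherwise the edge count $\sum_{v \in S_k}|\text{forward nbrs of }v|$ would overcount $|S_{k+1}|$). For $0 < k < 2m$, the tally ``one backward plus $2n-1$ forward'' already exhausts the $2n$ neighbors of $v$, so $v$ has no sideways neighbor and its entire $\Gamma$-neighborhood lies in $S_{k-1} \cup S_{k+1}$. Consequently $T := \bigsqcup_{k=0}^{2m} S_k$, equipped with the edges between consecutive levels, is a rooted tree of depth $2m$ with branching factor $2n-1$, and every internal vertex of $T$ has all $2n$ of its $\Gamma$-neighbors inside $T$.

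To build the forbidden subgraph, pick any $c \in S_m$ and set $V' := \{v \in T : d_T(c,v) \leq m\}$, where $d_T$ is tree distance in $T$. The key observation is that every $v \in T$ with $d_T(c,v) \leq m-1$ has depth strictly between $0$ and $2m$ in $T$ (since $c$ has depth $m$), hence is internal, so all of its $\Gamma$-neighbors already lie in $T$. Therefore any walk in $\Gamma$ of length at most $m$ starting from $c$ remains inside $T$, and because $T$ is a tree, distinct such walks reach distinct endpoints; in particular $d_\Gamma(c,v) = d_T(c,v)$ for all $v \in V'$. The subgraph $H$ on vertex set $V'$ with the restricted edges of $T$ is then a tree in which $c$ has degree $2n$ and every other non-leaf vertex has $2n-1$ outward children, reproducing the local structure of the ball of radius $m$ in $T_{2n}$. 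A direct count gives $|V'| = 1 + \sum_{k=1}^{m} 2n(2n-1)^{k-1} = |B_n(m)|$, and $H \cong B_n(m)$.

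The main obstacle is the combinatorial bookkeeping in the second step: extracting, from the single piece of information $|S_{2m}| = (2n-1)^{2m}$, the full tree structure at every intermediate level (one backward, $2n-1$ forward, zero sideways for each internal vertex). Once this is in place, the third step is essentially automatic, and the resulting subgraph $H \cong B_n(m)$ contradicts the hypothesis on $\Gamma$, completing the proof.
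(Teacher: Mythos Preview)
Your proof is correct and follows essentially the same route as the paper's: bound $|S_{k+1}|\le(2n-1)|S_k|$, deduce from $|S_{2m}|=(2n-1)^{2m}$ that equality holds at every level and hence that $\bigcup_k S_k$ carries a rigid tree structure, then center a ball of radius $m$ at a vertex of depth $m$ to produce a copy of $B_n(m)$. The only point you leave implicit is that every $w\in S_{k+1}$ has a neighbor in $S_k$ (needed for $|S_{k+1}|\le(2n-1)|S_k|$ and for the edge-count argument); this follows by taking a geodesic $e\to a\to w$ and looking at the penultimate vertex, exactly as in the paper.
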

    \begin{proof}
        Since the metric $d$ is the graph metric, for any $a\in \Gamma$ and $a\neq e$, there exists a neighbor $b$ of $a$ with $d(e, b) = d(e, a) - 1$. Since the degree of $a$ is at most $2n$, we have $|Sh(a, 1)| \le 2n-1$.

        Let $b\in Sh(a, k)$. Since $d(e, b) = d(e, a) + d(a, b)$, there exists a geodesic segment segment $[e, b]$ passing through $a$. Since $d(a, b) = k$, denote the chosen geodesic segment by $\gamma = (e, \cdots, a, b_1, b_2, \cdots, b_k = b)$. Then $b\in Sh(b_{k-1}, 1)$ and $b_{k-1} \in Sh(a, k-1)$. Moreover, for any $c\in Sh(a, k-1)$, the number of $b$ with $b_{k-1} = c$ is at most $|Sh(c, 1)|\le 2n-1$ where the inequality holds since $c\neq e$. Thus $|Sh(a, k)| \le (2n-1)|Sh(a, k-1)|$. By induction, $|Sh(a, k)| \le (2n-1)^k$ for any $k\ge 1$.

        Suppose $|Sh(a, 2m)| = (2n-1)^{2m}$. Then each inequality above is taken as equal. So $|Sh(a, k)| = (2n-1)^k$ for $1\le k\le 2m$ and $|Sh(x, 1)| = 2n-1$ for $x\in Sh(a, k)$, $1\le k\le 2m$. Moreover, consider the subgraph $A$ induced by vertices $\bigcup_{i=0}^{2m} Sh(a, i)$. Then $A$ is a tree with degree $2n-1$ at $a$ and degree $2n$ elsewhere. Let $b\in Sh(a, m)$. Then the ball $B_A(b, m)$ is isomorphic to $B_n(m)$, which contradicts with that $\Gamma$ has no subgraph isomorphic to $B_n(m)$. So $|Sh(a, 2m)| \le (2n-1)^{2m} -1$.
    \end{proof}
    \begin{thm}
        Let $\Gamma$ be a graph with degree at most $2n$ and no subgraph isomorphic to $B_n(m)$. Let $\omega$ be the growth rate of $\Gamma$. Then $\omega < \log (2n-1)$.
    \end{thm}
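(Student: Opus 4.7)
The plan is to follow the same template as the proof of the first theorem in the appendix (the $d$-regular case), substituting the new shell-counting lemma in place of the elementary loop argument. The key structural observation is that the sphere $S_N := \{v \in \Gamma : d(e,v) = N\}$ decomposes as
\[
S_{k+j} \;\subseteq\; \bigcup_{a \in S_k} Sh(a, j)
\]
for any $k \ge 0$ and $j \ge 0$, since any geodesic from $e$ to a vertex at distance $k+j$ must pass through some vertex at distance $k$. This yields the basic inequality
\[
|S_{k+j}| \;\le\; |S_k| \cdot \max_{a \in S_k} |Sh(a,j)|.
\]

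First I would fix $j = 2m$ and apply the preceding lemma. Since the lemma requires $a \ne e$, I apply it for $k \ge 1$, where every $a \in S_k$ is distinct from $e$. Setting $D := (2n-1)^{2m} - 1$, we obtain $|S_{k+2m}| \le D \cdot |S_k|$ whenever $k \ge 1$. Together with the trivial bound $|S_1| \le 2n$, an induction on $t$ gives
\[
|S_{1 + 2mt}| \;\le\; 2n \cdot D^{t}
\]
for all $t \ge 0$. Interpolating over the $2m$ residue classes modulo $2m$ produces a uniform constant $c > 0$ (depending only on $n$ and $m$) such that $|S_N| \le c \cdot D^{N/(2m)}$ for all $N \ge 1$.

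Taking $N \to \infty$ yields
\[
\omega \;\le\; \frac{\log D}{2m} \;=\; \frac{\log\bigl((2n-1)^{2m} - 1\bigr)}{2m} \;<\; \frac{\log\bigl((2n-1)^{2m}\bigr)}{2m} \;=\; \log(2n-1),
\]
where the strict inequality uses $D < (2n-1)^{2m}$. This gives the conclusion.

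There is no genuine obstacle: the hard combinatorial content was absorbed into the preceding lemma, and what remains is the standard exponential-growth bookkeeping already executed in the regular-tree case at the start of the appendix. The only minor care needed is the separation between $S_0 = \{e\}$ and the rest, which is why the induction is started at $k = 1$ and the $2n$ factor for $|S_1|$ is inserted as the base constant.
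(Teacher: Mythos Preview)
Your proposal is correct and follows essentially the same approach as the paper: both decompose $S_{k+2m}$ into the union of shadows $Sh(a,2m)$ over $a\in S_k$, invoke the preceding lemma to get the multiplicative factor $D=(2n-1)^{2m}-1$, iterate, and conclude $\omega\le (\log D)/(2m)<\log(2n-1)$. The only cosmetic differences are that the paper anchors the induction at $|S_{2m}|\le (2n)^{2m}$ and computes along the subsequence $(S_{2mk})_k$, whereas you anchor at $|S_1|\le 2n$ and interpolate across all residue classes; your explicit attention to the $a\ne e$ hypothesis of the lemma is a nice touch.
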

    \begin{proof}
        Fix a basepoint $e\in \Gamma$. Let $S_n = \{a: d(a, e) = n\}$. Then for $n>2m$, $$S_{n} = \bigcup_{a\in S_{n-2m}} Sh(a, 2m).$$ By lemma above, $|Sh(a, 2m)| \le (2n-1)^{2m}-1$. So $|S_n| \le |S_{n - 2m}| ((2n-1)^{2m} -1)$. Since degree of $\Gamma$ is at most $2n$, we have $|S_{2m}| \le (2n)^{2m}$. Thus by induction, we have $$|S_{2mk}| \le ((2n-1)^{2m}-1)^{k-1} |S_{2m}| \le ((2n-1)^{2m}-1)^{k-1} (2n)^{2m}.$$ Let $\alpha = ((2n-1)^{2m}-1)^{\frac{1}{2m}}$. Then $\alpha < 2n-1$. Thus, $$\omega = \lim_{n\to \infty} \frac{\log|S_n|}{n} = \lim_{k\to \infty} \frac{\log|S_{2mk}|}{2mk} \le \lim_{k\to \infty} \frac{2m(k-1)\log \alpha + 2m \log{2n}}{2mk} = \log \alpha < \log(2n-1).$$
    \end{proof}
    Then by the same method we can prove Corollary \ref{ThmFree}.
	\bibliography{bibliography}
	\bibliographystyle{alpha}
\end{document}